\newlength\myindent 
\newcommand{\iph}{{i+\frac{1}{2}}}
\newcommand{\imh}{{i-\frac{1}{2}}}
\newcommand{\half}{\frac{1}{2}}
\newenvironment{proof}{\noindent\textbf{Proof\ }}{\hspace*{\fill}$\Box$\medskip}
\newtheorem{theorem}{Theorem}
\newcommand{\abs}[1]{\left| #1\right|}
\newcommand{\no} {\nonumber}
\newcommand{\dt}{\Delta t}
\newcommand{\noi} {\noindent}
\title{A well-balanced second-order finite volume approximation for a coupled system of granular flow }
\author{Aekta Aggarwal\\
Operations Management and Quantitative Techniques\\
Indian Institute of Management Indore\\
Indore--453556\\
\texttt{aektaaggarwal@iimidr.ac.in}
\And
Veerappa Gowda ~G.~D. \\
Centre for Applicable Mathematics\\
Tata Institute of Fundamental Research\\
Bangalore -- 560065\\
\texttt{gowda@tifrbng.res.in} \\
\And
Sudarshan~Kumar~K.\thanks{Corresponding author} \\
School of Mathematics\\
Indian Institute of Science Education and Research\\
Trivandrum --695551\\
\texttt{sudarshan@iisertvm.ac.in} \\
}
\begin{document}
\maketitle
\begin{center}
  This paper is dedicated to Prof. Adimurthi on the occasion of his 70th birthday  
\end{center}

\begin{abstract}
A  well-balanced  second-order finite volume scheme is proposed and analyzed for a $2\times2$ system of non-linear partial differential equations which describes the dynamics of growing sandpiles created by a vertical source on a flat, bounded rectangular table
in multiple dimensions. To derive a second-order scheme, we combine a MUSCL type spatial reconstruction  with strong stability preserving Runge-Kutta time stepping method. The resulting scheme is ensured to be  well-balanced
 through a modified limiting approach that  allows the scheme to reduce to well-balanced first-order scheme near the steady state while maintaining the second-order accuracy away from it. The well-balanced property of the scheme is proven analytically in one dimension and demonstrated numerically in two dimensions. Additionally, numerical
experiments reveal that the second-order scheme reduces  finite time oscillations, takes fewer time iterations for achieving the steady state and gives sharper resolutions of the physical structure of the sandpile, as compared to the existing first-order schemes of the literature.
\end{abstract}

\keywords{Hamilton Jacobi equaitons\and Well-Balanced schemes\and Discontinuous Flux \and Sandpile\and Balance Laws}

\section{Introduction}
\label{sec:I}
The study of granular
 matter  dynamics has been gaining interest among applied mathematicians in the last few years. 
 A wide array of models can be found in the literature, ranging from kinetic models to hyperbolic differential equations. For detailed discussion of these models, refer to \cite{V21} and the  references therein.
 This area of research has seen numerous endeavors focusing on the theoretical aspects of differential equations, as evidenced by works such as \cite{B:6,CCS1,CCS,10010,V5,12,11}. Additionally, considerable efforts have been devoted to the numerical approximation of these proposed models, as seen in \cite{Vita,V2,adimurthi2016,adimurthi2016a}.

In this work, our focus is on the model equations introduced in \cite{11}, commonly referred to as the Hadler and Kuttler (\textbf{HK}) model. This model comprises of a coupled system of non-linear partial differential equations and is widely recognized for describing the evolution of a sandpile formed by pouring dry sand grains onto a flat and bounded table surface denoted as $\Omega$. The evolution of sandpile is governed by these equations under the influence of a time-independent non-negative vertical source represented by $f\in L^1 (\Omega).$ It is assumed that all sand grains are uniform in size, thus disregarding phenomena like segregation or pattern formation. Additionally, external factors such as wind or stress fields within the bulk of the medium are not taken into account. The \textbf{(HK)} model reads as:\begin{eqnarray}
u_{t}= (1-|\nabla u|)v, \, \, \, \, &\mbox{in}\, \, \, \, \, \, \, & \Omega\times (0, T] \label{HKw1}\\
v_{t}-\nabla. (v\nabla u)=- (1-|\nabla u|)v+f, \, \, \, \, &\mbox{in}\, \, \, \, \, \, \, &\Omega\times (0, T] \label{HKw2}\\
u (\textbf{x}, 0)=u_0 (\textbf{x}), \, \, \, v (\textbf{x}, 0)=v_0 (\textbf{x}), \, \, \, \, \, \, \, &\mbox{in}\, \, \, \, \, \, \, &\Omega \label{HKw3}
\end{eqnarray}
where at any point $(\textbf{x}, t)\in \Omega\times [0, T],$
  $u (\textbf{x}, t)$ denotes the local height of the pile containing the grains at rest and is called the \textit{standing} layer, and $v (\textbf{x}, t)$ denotes the \textit{rolling} layer, formed by the grains 
that roll on the surface of the pile until they are captured by the standing layer. 
Further, the boundary $\partial\Omega$ can be split into two parts: $\Gamma_o$, an open non-empty subset of $\partial\Omega$ where the sand can fall 
down from the table and $\Gamma_w=\partial\Omega\setminus\Gamma_o$ where the sand is blocked by a wall. From modelling point of view, a wall of arbitrary height  can be imagined on $\Gamma_w$ so that no sand can trespass this wall, while on $\Gamma_o$, the table is ``open''.
If $\Gamma_o=\partial\Omega$, then the problem is called as \textit{open table problem}, otherwise it is called \textit{partially open table problem}. The system \eqref{HKw1}-\eqref{HKw2} is supplemented with the following  boundary conditions: \begin{eqnarray}\label{HKw4}
    u=0\, \, \, \, \mbox{in}\, \, \, \, \, \, \Gamma_o, \, \, \, \, v\displaystyle\frac{\partial u}{\partial \nu}=0\, \, \, \, \mbox{in}\, \, \, \, \, \, \Gamma_w
\end{eqnarray}
a detailed discussion can be found in \cite{11,Vita,adimurthi2016,adimurthi2016a}.
For stability reasons, $\abs{\nabla{u}}$ cannot exceed 1. Moreover, at any equilibrium, the profile of  $|\nabla u|$ must be maximal
where transport occurs (that is, where $v>0$). The exchange of the grains between the two layers occurs through an exchange term  $ (|\nabla u|-1)v$ which is independent of the slope orientation
and  can be characterized as erosion/deposition\@.
The equilibrium of the system \eqref{HKw1}-\eqref{HKw4} is given by: 
\begin{equation}\label{EWd}
\begin{split}
-\nabla. (v\nabla u)&=f, \quad \mbox{in} \,\,\Omega,\\
|\nabla u|&=1, \quad  \mbox{on}\,\, \{v>0\}\\
|\nabla u| \leq1,\,u, v&\geq0, \quad \mbox{ in }\, \, \, \,  \Omega\\
 u=0\quad \mbox{in}\,\, \Gamma_o,\,\,\, &v\displaystyle\frac{\partial u}{\partial \nu}=0\, \, \mbox{in}\,\, \Gamma_w
 \end{split}
 \end{equation}
A complete mathematical theory for the existence of solutions of (\textbf{HK}) model at finite time and at equilibrium is still not completely settled and
is not
covered by standard existence and 
uniqueness results available for hyperbolic balance laws, see~\cite{AS1,AS3,Shen,11,B:6,CCS1,CCS} for some limited results.  Recent attention has also been devoted to exploring the slow erosion limit of the model in one dimension, as evident in ~\cite{COCLITE2017337,amadori2012front,colombo2012lipschitz,guerra2014existence,bressan2014semigroup} and references therein. In the past decade, numerous studies have aimed to develop robust numerical  schemes approximating (\textbf{HK}) model, with the ability to preserve the discrete steady states and
the physical properties of the model efficiently, see \cite{Vita,V21,adimurthi2016,adimurthi2016a}. In this context, finite difference schemes capturing the discrete steady states were proposed and analyzed in \cite{Vita,V21} and well-balanced finite volume schemes were developed in \cite{adimurthi2016,adimurthi2016a} using the basic principle of conservation laws with discontinuous flux. The class of finite volume schemes based on conservation laws with discontinuous flux have been used in the last decade for various real life applications, see \cite{burger2018,burger2016,sudarshan2014,adimurthi2016b}. The schemes proposed in \cite{adimurthi2016,adimurthi2016a} were shown to be well-balanced and capable of capturing the sharp crests at the equilibrium state more efficiently than existing methods. However, these schemes exhibited oscillations near the initial condition, persisting for a significant duration, leading to a delay in reaching  the steady state. This issues give rise to an important question: is it possible to control these oscillations and reduce the time taken to reach the steady state by moving to a high-order scheme? Simultaneously, it also put forth the question of whether these high-order scheme could result in a sharper resolution of the discrete steady state.


In various scenarios, 
the well-balanced schemes for hyperbolic systems have been of keen interest over the past few decades, see \cite{bermudez1994upwind,greenberg1996well,gosse2000well,harten1983upstream,berthon2016fully,michel2016well,ghitti2020fully,desveaux2022fully} and the references therein. It has been observed that capturing moving steady states or those with complex structures, like that of \eqref{EWd}, can be a challenging task in general. To the best of our knowledge, there have been no studies on second-order schemes for \eqref{EWd} in the existing literature. However, in the case of shallow water equations, second-order schemes were proposed and analyzed  in \cite{michel2016well,desveaux2022fully}. It has been noted in these studies that the second-order extensions may not inherently possess the well-balanced characteristics, and an adaptation algorithm is required to ensure this property. In this worK to derive a second-order scheme 
we employ a MUSCL-type spatial reconstruction \cite{van1979towards} along with a strong stability preserving Runge-Kutta time stepping method \cite{gottlieb1998otalVD,gottliebsigalshu2001}, which is basically an extension of  the first-order scheme of \cite{adimurthi2016,adimurthi2016a}.
In Section \ref{Well}, we illustrate that this  second-order scheme  is not well-balanced for the state variable $v.$ To overcome this difficulty, we modify the proposed second-order scheme with an adaptation procedure  similar to that of \cite{desveaux2022fully}, to develop a well-balanced second-order scheme. The procedure involves a modified  limitation strategy in the linear reconstruction of the approximate solution at each time step. We establish that the resulting scheme is well-balanced and is able to accurately capture the discrete steady state.

The rest of this  paper is organized as follows. In Section \ref{sec: An-approximation-Scheme}, we focus on deriving the second-order numerical scheme and provide a concise overview of the first-order numerical scheme proposed in \cite{adimurthi2016,adimurthi2016a}. The stability analysis for the second-order scheme is presented in Section \ref{sec: Stability-results}. A discussion about the well-balance property of the second-order scheme is outlined in Section \ref{Well}. In Section \ref{sec:adaptive1d}, we elucidate the second-order adaptive scheme and analytically establish its well-balance property. Section \ref{sec: Scheme-for-2} deals with the extension of the first-order scheme from one dimension to two dimensions, along with the adaptation procedure in the two-dimensional context. In Section \ref{sec: Numerical-Experiment}, we provide numerical examples in both one and two dimensions to showcase the performance of the proposed second-order adaptive scheme in comparison to the non-adaptive second-order scheme and the first-order schemes of \cite{adimurthi2016,adimurthi2016a}. We finally draw our conclusion in Section~\ref{sec:conclusion}.
\section{Numerical schemes in one-dimension}\label{sec: An-approximation-Scheme}
We now present the numerical algorithm approximating \eqref{HKw1}-\eqref{HKw2} in one dimension. First, we briefly review the first-order finite volume schemes of \cite{adimurthi2016,adimurthi2016a} and then present the second-order scheme.
Let $\Omega:= (0, 1).$ As in \cite{adimurthi2016}, we rewrite \eqref{HKw1}-\eqref{HKw3} as follows:
\begin{eqnarray}
u_t+F_1 (\alpha,v)=0,& &\mbox{ in } \Omega\times (0,T]\label{eq:deposit}\\
v_t+F_2 (\alpha,v,B)_x-F_1 (\alpha,v)=0,& & \mbox{ in } \Omega\times (0,T]
\label{eq:erosion2}\\
u (x,0) = v (x,0) = 0,& &  \mbox{ in } \Omega \nonumber
\end{eqnarray}
where  $\alpha=u_x,\,B (x)=\displaystyle\int_0^xf (\xi)d\xi,\, F_1 (\alpha,v)= (|\alpha|-1)v,$ and $ F_2 (\alpha,v,B)= -\alpha v-B (x)$. For $\Delta x,\,\Delta t>0,$ we define $\lambda:=\Delta t/\Delta x$ and consider equidistant spatial grid points $x_{i+\half}:=i\Delta x,$ for non-negative integers $i\in \mathcal{M} :=\{0,1, .....M\}$ and temporal grid points $t^n:=n\Delta t$ 
for non-negative integer $n\in \mathcal{N}_T :=\{0,1, .....N_T\}$, such that $x_{\half}=0,\, x_{M+\half}=1$ and $T=t^{N_T}$. Let $ x_i = \frac{1}{2} (x_\iph+x_\imh)$ for $i\in\{1, .....M\}$.  We also denote the cell $C_i^n:=C_i\times C^n,$ where $C_i:=[x_{i-\half}, x_{i+\half})$ and  $C^{n}:=[t^n,t^{n+1}).$
Let $u_{i+\half}^{n}$ be an approximation of the solution $u$ 
calculated at grid points $x_{i+\half}$ at time $t^n$\@. 
For each $ (i,n)\in \mathcal{M}\times\mathcal{N}_T$, define 
\begin{align*}
\alpha_{i}^{n}:=\frac{u_{i+\half}^{n}-u_{i-\half}^{n}}{\Delta x} \mbox{ and }
\;v_{i}^{n}:=\frac{1}{\Delta x}\int_{C_i}v (x, t^n)dx
  \end{align*}
as the approximation for $\alpha, v$ in the cell $C_{i}^n$. Also, following \cite{adimurthi2016a} we use the notation
$D_f:= \overline{\{ x: f (x)\ne 0\}}.$
\subsection{First-order scheme}\label{sec:foscheme} The first-order finite volume scheme for the system (\ref{HKw1})-(\ref{HKw2}), formulated in \cite{adimurthi2016,adimurthi2016a} is given by
\begin{align}\label{first}
\begin{split}
    u_{i+\half }^{n+1}&=u_{i+\half}^{n}-\Delta t G_{i+\half}^{n},\, (i,n)\in (\mathcal{M}\setminus\{M,0\})\times \mathcal{N}_T\\
v_{i}^{n+1}&=v_{i}^{n}-\lambda (H_{i+\half}^{n}-H_{i-\half}^{n})+\Delta t S_i^{n},\, (i,n)\in (\mathcal{M}\setminus\{0\})\times\mathcal{N}_T
\end{split}
\end{align}
Here, the terms $G^n_{i+\half}$ and $H_{i+\half}^{n}$  are the numerical fluxes associated with the fluxes $F_1$ and $F_2,$ respectively at $ (x_{i+\half},t^n)$ and are given by:
\begin{align*}
    G^n_{{ i+ \half}}= G (\alpha_i^n, v_i^n, \alpha_{i+1}^n,\; v_{i+1}^n), \;H^n_{{ i+ \half}}= H (\alpha_i^n, v_i^n, \alpha_{i+1}^n, v_{i+1}^n,B_i,B_{i+1})
\end{align*}
with 
\begin{align}
G (a,b, c,d)&= \max\{ (|\max\{a,0\}| - 1) b, (|\min\{c,0\}|-1)d \}
\label{alphaflux}\\
\no \\
H (a, b, c, d,e_1,e_2) &= \begin{cases}
 (-ab- e_1), & -a \ge 0, -c \ge 0 \\
 (-cd- e_2), & -a< 0, -c \le 0 \\
\displaystyle\frac{ ( - c e_1+ a e_2)}{ (c-a)},   & -a< 0, -c > 0 \\
 (-a b - e_1), & b> d \;\;{\mbox{and}}\;\;-a \geq 0,-c \leq  0\\
 (-c d - e_2), & b < d,  \;\;{\mbox{and}}\;\;-a \geq 0,-c \leq  0\\
-\frac{1}{2} (ab + cd+ e_1+e_2), & b=d  \;\;{\mbox{and}}\;\;-a \geq 0,-c \leq  0
\end{cases}
\label{hflux}
\end{align} 
The term $S_i^{n}$ is given as 
\begin{equation*}\label{wS01}
 S_i^{n}=v_{i}^{n} (|\alpha_{i}^{n}|-1) 
\end{equation*}
The scheme \eqref{first} is implemented with initial conditions given by
$u_{i+\half}^{0}=0, \mbox{ for all } i \in \mathcal{M} \mbox{ and } v_{i}^{0}=0, \mbox{ for  all } i \in \mathcal{M}\setminus\{0\}$. Regarding the boundary conditions, we mainly consider two types: open table ($\Gamma_o=\{0,1\}$) and partially open table ($\Gamma_o=\{0\}$). In each case, the boundary conditions are specified as follows:

\underline{Boundary condition for $u:$}
\begin{itemize}
\item $\Gamma_o=\{0,1\}:$
$$u_{\half}^{n}=u_{M+\half}^{n}=0$$
\item $\Gamma_o=\{0\}:$ 
\begin{eqnarray*}
\begin{array}{lll}
u^n_\half = 0,\quad 
u_{M+\half}^{n+1}=\left\{\begin{array}{ccl}
u_{M-\half}^{n}&\, \, \mbox{if}\, \, & D_f=[X_1, X_2], X_2<1 \\
u_{M-\half}^{n}+\Delta x\;\max (\alpha_{M-1}^{n}, 0)&\, \, \mbox{if}\, \, & D_f=[X_1, X_2], X_2=1
\end{array}\right. 
\end{array}
\end{eqnarray*}
\end{itemize}
\underline{Boundary conditions for $v:$} The  boundary condition for $v$  are set through the fluxes  as done in \cite{adimurthi2016,adimurthi2016a}.
\begin{itemize}
\item  $\Gamma_o=\{0,1\}:$
\begin{align*}
\begin{split}
    H_{\half}^n=-\alpha_{1}^nv_1^n-B_1, \quad\;& H_{M+\half}^n= -\alpha_M^n v_M^n -B_M
\end{split}
\end{align*}
\item $\Gamma_o=\{0\}:$ 
\begin{align}
H_{\half}^n=-\alpha_{1}^nv_1^n-B_1,\quad
   \label{wF1} H_{M+\half}^{n}=\left\{\begin{array}{ccl}
-\alpha_M^{n}v_M^{n}-B_M& \, \, \mbox{if}\, \, & \alpha_M^{n}\le 0 \\
-B_{M+1}& \, \, \mbox{if}\, \, & \alpha_M^{n}>0
\end{array}\right. 
\end{align}
\end{itemize}
\subsection{Second-order scheme}\label{second} We now describe the  second-order extension of the first-order scheme described in the previous section. To construct a second-order scheme, we employ  a MUSCL type spatial reconstruction and a two stage strong stability preserving Runge-Kutta method in time. To begin with, in each cell $C_i,$ we construct a piecewise linear function $z_{\Delta x}$ which is defined  by
$$z_{\Delta x} (x,t):=z_i^n+\frac{ (x-x_i)}{\Delta x} (z^n_{i+\half,L}-z^n_{i-\half,R}), \;\;z=\alpha, v \mbox{ and } B$$
where
\begin{eqnarray} \label{slop1}
z_{i+\half,L}^{n}=z_i^n+ \half Dz^n_i,\;
z_{i-\half,R}^{n}=z_i^n- \half Dz^n_i
\end{eqnarray}
and
\begin{align}
Dz^n_i&=2 \theta\,\mbox{\mbox{minmod}}\left (z^n_i -z^n_{i-1},\half ({z^n_{i+1}-z^n_{i-1}}), z^n_{i+1}-z^n_i\right),\;\theta \in [0,1]
\label{slope}
\end{align}
represent the slopes obtained using the minmod limiter,
where the minmod function is defined as
\begin{align}
\textrm{minmod} (a_{1}, \cdots , a_{m}):= \begin{cases}\mathrm{sgn} (a_{1}) \min\limits_{1 \leq k \leq  m}\{\lvert a_{k}\rvert\}, \hspace{0.55cm} \mbox{if} \ \mathrm{sgn} (a_{1}) = \cdots = \mathrm{sgn} (a_{m})\\
0, \hspace{3.45cm}\mbox{otherwise}
\end{cases}
\end{align}
Note that $\theta=0$ in \eqref{wF1} gives the first- order scheme, while $\theta=0.5$ gives the usual \mbox{minmod} limiter. 
For each $i$, we can find $\sigma_{i}^L $
  and $\sigma_{i}^R $
 with  $0 \le \sigma_{i}^L,\sigma_{i}^R \leq 1$ such that 
\begin{eqnarray}\label{slope1}
\begin{array}{lll}
z_{i+\half,L}^{n}=z_{i}^n+\theta \sigma_i^R (z_{i+1}^n-z_{i}^n), \quad
z_{i+\half,R}^{n}=z_{i+1}^n-\theta \sigma_{i+1}^L (z_{i+1}^n-z_{i}^n) 
\end{array}
   \end{eqnarray}
This ensures that for  all $ 0 \le \theta \le 1,$ 
$$  \min \{z_i^n, z_{i+1}^n\} \leq z_{i+\half,L}^{n},z_{i+\half,R}^{n} \le \max\{z_i^n, z_{i+1}^n\} $$ 
{\color{black} The time steps of the second-order scheme using the Runge-Kutta method are defined as follows:}\\
\underline{\bf RK step-1}: Define
\begin{align}\label{av*}
\begin{split}
   u_{i+\half}^{n,*}&:=u_{i+\half}^n-  \Delta t G_{ i+ \half}^{n}\\
{v}^{n,*}_i&:=v_i^n- \lambda (H_{{ i+ \half}}^{n}-H_{i-\half}^{n})+\dt S_{ i+ \half, L}^n 
\end{split}
\end{align}
where
\begin{align*}
    G_{{ i+ \half}}^{n} &= G (\alpha_{ i+ \half, L}^{n}, v_{ i+ \half,L}^{n}, \alpha_{ i+\half, R}^{n}, v_{ i+ \half, R}^{n})\\
    H_{{ i+ \half}}^{n} &=H (\alpha_{ i+\half,L}^{n}, v_{ i+\half, L}^{n}, \alpha_{ i+ \half, R}^{n}, v_{ i+\half, R}^{n},B_{ i+ \half,L}, B_{ i+ \half, R})\\
    S_{ i+ \half, L}^n&=-g (\alpha_{i+\half,L}^n, v_{i+\half,L}^n)
\end{align*}
and $g (a,b)= (|a|-1)b.$ Also, we have 
\begin{align}\label{eq:alphark}
    \alpha_i^{n,*} = \frac{ u^{n,*}_{i+\half}-u^{n,*}_{i-\half}} {\Delta x}
\end{align}
\noi \underline{\bf RK step-2:} Given the values $\alpha_i^{n,*}$  and $v^{n,*}_{i}$ from RK step-1, we proceed to construct corresponding $\alpha_{ i+ \half,L}^{n,*}, \alpha_{ i+ \half, R}^{n,*}, v_{ i+ \half,L}^{n,*}$  and $v_{ i+ \half, R}^{n,*} $ by using \eqref{slop1}-\eqref{slope} for $z=\alpha^*,v^*$  and $B^*$. Here, we define 
\begin{align*}
\begin{split}
   u_\iph^{n,**}&:=u_\iph^{n,*}-  \lambda ( G_{ i+ \half}^{n,*} - G_{i-\half}^{*}) \\
v^{n,**}_{i}&:=v^{n,*}_{i}- \lambda (H_{{ i+ \half}}^{n,*}-H_{i-\half}^{n,*})+\dt S_{ i+ \half,L}^{n,*} 
\end{split}
\end{align*}
where
\begin{align*}
G_{{ i+ \half}}^{n,*} &= G (\alpha_{ i+ \half,L}^{n,*}, v_{ i+ \half,L}^{n,*}, \alpha_{ i+ \half,R}^{n,*}, v_{ i+ \half, R}^{n,*}) \no\\
H_{{ i+ \half}}^{n,*}& = H (\alpha_{ i+ \half,L}^{n,*}, v_{ i+ \half,L}^{n,*}, \alpha_{ i+ \half,R}^{n,*}, v_{ i+ \half,R}^{n,*},B_{{ i+\half},L},B_{{ i+ \half},R })\no\\
S_{ i+ \half, L}^{n,*}&=-g (\alpha_{i+\half,L}^{n,*} v_{i+\half,L}^{n,*})
\end{align*}
Finally, the second-order scheme is written as
\begin{equation}
    \begin{split}
u_\iph^{n+1}&=\half ({u_\iph^n+u_\iph^{n,**}}{})\label{rkfalpha}  \\
v_i^{n+1}&=\half ({v_i^n+v^{n,**}_{i}}{})
    \end{split}
\end{equation}
\noindent

\subsection{Boundary conditions for the second-order scheme}\label{sec:bcsocheme}
\underline{Case 1:} (open table problem, $\Gamma_0= \{ 0,1\}$)
in the second-order case, the computation of fluxes  for $i = \frac{3}{2}$ and $ i= M-\half$ requires the ghost cell values for the linear reconstruction. To simplify this, we set the slopes $Dz_i^n$ in the first and last cells to zero. Further, 
the boundary condition for $u$ is implemented in the same way as in the first-order scheme given in Section~\ref{sec:foscheme}, specifically  for grids corresponding to $i = \half $ and $ M+\half.$ Similarly for the case of $v.$

\noindent
\underline{Case 2:} (partially open table problem with $\Gamma_0 =\{0\}$) in this case as well, we set the slopes   $Dz_i^n$  in the first and last cells to zero, and the boundary conditions for $u$ and $v$ at the left boundary, where $x =0$ are same as the first-order case in Section~\ref{sec:foscheme}. At the right boundary, where $x =1,$ the boundary condition for $v$ remains the same as in Section~\ref{sec:foscheme}. However, the approximation for $u$  at this boundary is evolved as
\begin{eqnarray*}
\begin{array}{lll}
u_{M+\half}^{n+1}=\left\{\begin{array}{ccl}
u_{M-\half}^{n},&\, \, \mbox{if}\, \, & D_f=[X_1, X_2], X_2<1 \\
u_{M-\half}^{n}+ \Delta x \max (\alpha_{M-\half,L}^{n}, 0),&\, \, \mbox{if}\, \, & D_f=[X_1, X_2], X_2=1
\end{array}\right. 
\end{array}\
\end{eqnarray*}

\section{Stability results in  one-dimension\label{sec: Stability-results}}
In this section, we show that the numerical solutions obtained using the second-order scheme \eqref{rkfalpha} satisfy  the physical properties  in the case of open table problem. Simlarly, one can derive the results for the partially open table problem.
\begin{theorem}\label{stablity}
 Let $f\ge0$ in $\Omega$\@. Assume that 
 ${\displaystyle \sup_{i\in \mathbb{Z}} |\alpha_i^0|\leq1},$ 
  ${u_\iph^0}\ge0$ and
  $v_i^{0}\ge0$, for all $i\in \mathbb{Z},$
then the numerical scheme (\ref{rkfalpha}) under the CFL conditions:
 \begin{equation}
{\displaystyle {\lambda \max_{i}v_{i}^{n}\le \half}},\label{CFL1}\end{equation}
\begin{equation}
 {\lambda\leq \displaystyle \half-\Delta t}\label{CFL2}\, \, \end{equation} 
satisfies the following properties for all $i \in \mathbb{Z}$:
\begin{enumerate}[ (i)]
\item \label{P1} ${\displaystyle \sup_{i\in \mathbb{Z}}|\alpha_i^{n+1}|\leq 1}$, $u_\iph^{n+1}\ge u_\iph^{n}\ge 0$
\item \label{P0} $v_i^{n+1}\ge 0$
 \end{enumerate}

\end{theorem}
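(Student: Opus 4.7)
The plan is to exploit the strong-stability-preserving (SSP) structure of the two-stage Runge-Kutta update \eqref{rkfalpha}, which writes
\[ u_\iph^{n+1}=\tfrac12\bigl(u_\iph^n+u_\iph^{n,**}\bigr),\qquad v_i^{n+1}=\tfrac12\bigl(v_i^n+v_i^{n,**}\bigr), \]
as a convex combination of $(u^n,v^n)$ and $(u^{n,**},v^{n,**})$. Since $(u^{n,**},v^{n,**})$ is itself obtained by applying the forward-Euler map in \eqref{av*} to $(u^{n,*},v^{n,*})$, and since the properties $|\alpha|\le 1$, $u\ge 0$, $v\ge 0$, and $u_\iph$ non-decreasing in $n$ are all preserved by convex combinations, it suffices to prove that one forward-Euler stage preserves them. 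Thus the whole theorem reduces to showing that assuming $\sup_i|\alpha_i^n|\le 1$, $u_\iph^n\ge 0$ and $v_i^n\ge 0$, the starred state satisfies $\sup_i|\alpha_i^{n,*}|\le 1$, $u_\iph^{n,*}\ge u_\iph^n$, and $v_i^{n,*}\ge 0$.

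The first step in that reduction is a reconstruction bound: from \eqref{slope1} we have $\min\{z_i^n,z_{i+1}^n\}\le z_{\iph,L}^n, z_{\iph,R}^n\le\max\{z_i^n,z_{i+1}^n\}$ for $z=\alpha,v$, so the MUSCL-reconstructed half-edge values inherit $|\alpha_{\iph,L/R}^n|\le 1$ and $v_{\iph,L/R}^n\ge 0$. Feeding these into \eqref{alphaflux} gives $G_{\iph}^n\le 0$ immediately, because both arguments of the max are of the form (non-positive number)$\times$(non-negative number): this yields $u_\iph^{n,*}-u_\iph^n=-\Delta t G_\iph^n\ge 0$, settling the monotonicity of $u$ and its non-negativity.

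Next, using $\alpha_i^{n,*}=\alpha_i^n-\lambda(G_\iph^n-G_\imh^n)$, which comes from differencing \eqref{av*} and applying \eqref{eq:alphark}, one must show $|\alpha_i^{n,*}|\le 1$. The sign bound $G\le 0$ together with the magnitude estimate $|G_{i\pm\half}^n|\le\max\{v_{i\pm\half,L}^n,v_{i\pm\half,R}^n\}$ (visible directly from \eqref{alphaflux}) controls $\alpha_i^{n,*}-\alpha_i^n$ in terms of $\lambda\max v$. The CFL condition \eqref{CFL1} handles the upper bound of $|\alpha|$, while \eqref{CFL2}, which involves both $\lambda$ and $\Delta t$, arises precisely from combining this flux-difference bound with the unit-magnitude constraint $|\alpha_i^n|\le 1$ and the fact that $v_i^n\le 1/(2\lambda)$ from \eqref{CFL1}. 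Verifying this cleanly requires a short case analysis on whether $\alpha_{\iph,L}^n$ and $\alpha_{\iph,R}^n$ are positive or negative, which selects between the two branches of the max in \eqref{alphaflux}.

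For $v_i^{n,*}\ge 0$, one enters the main casework. The flux $H$ in \eqref{hflux} has six branches determined by the signs of $-\alpha_{\iph,L}^n$, $-\alpha_{\iph,R}^n$ (and comparisons of the $v$ values). In each branch one rewrites $v_i^{n,*}$ from \eqref{av*} as a convex combination of $v_i^n$ and the neighboring reconstructed $v_{i\pm\half,L/R}^n$ with coefficients that are non-negative precisely under the CFL \eqref{CFL1}, plus the contribution from $-\Delta t\,g(\alpha_{\iph,L}^n,v_{\iph,L}^n)=\Delta t(1-|\alpha_{\iph,L}^n|)v_{\iph,L}^n\ge 0$ and the boundary/source term $B$, which is non-negative because $f\ge 0$. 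The hardest part of the proof will be this case-by-case positivity argument for $v$, since it is where the sharp CFL constant $1/2$ in \eqref{CFL1} is consumed and where the interaction between the MUSCL reconstruction, the Godunov-type flux \eqref{hflux}, and the source $S_{\iph,L}^n$ must line up. Once all three starred-stage bounds are in hand, invariance of the set under \eqref{av*} is established, the second Euler stage inherits the bounds by the same argument applied to $(u^{n,*},v^{n,*})$, and convexity closes the proof.
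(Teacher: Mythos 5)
Your overall architecture coincides with the paper's: reduce via the SSP convex combination to a single forward--Euler stage, use the reconstruction bound $\min\{z_i^n,z_{i+1}^n\}\le z_{\iph,L}^n,z_{\iph,R}^n\le\max\{z_i^n,z_{i+1}^n\}$, deduce $G_{\iph}^n\le 0$ for the monotonicity and non-negativity of $u$, and run a branch-by-branch positivity argument for $v$ based on \eqref{hflux}. The genuine gap is in the step $|\alpha_i^{n,*}|\le 1$. The mechanism you describe --- the sign bound $G\le 0$ plus the magnitude estimate $|G_{i\pm\half}^n|\le\max\{v_{\ipmh,L}^n,v_{\ipmh,R}^n\}$, combined with \eqref{CFL1} --- only controls the increment, giving $|\alpha_i^{n,*}-\alpha_i^n|\le 2\lambda\max_i v_i^n\le 1$ and hence $|\alpha_i^{n,*}|\le 2$, which is not the claim; no sign casework on $\alpha_{\iph,L}^n,\alpha_{\iph,R}^n$ rescues an argument that keeps $\alpha_i^n$ whole and bounds the flux difference separately. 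The missing ingredient is the identity $\alpha_i^n=\half\bigl(\alpha_{\iph,L}^n+\alpha_{\imh,R}^n\bigr)$, which lets one view $\alpha_i^{n,*}$ as a function $K_1$ of the four reconstructed interface slopes with $\partial K_1/\partial\alpha_{\iph,L}^n=\half-\lambda v_{\iph,L}^n\ge 0$ and $\partial K_1/\partial\alpha_{\imh,R}^n=\half-\lambda v_{\imh,R}^n\ge 0$ under \eqref{CFL1}; since all fluxes vanish when every slope equals $+1$ (or every slope equals $-1$), monotonicity sandwiches $\alpha_i^{n,*}$ between $K_1(-1,\dots,-1,\cdot)=-1$ and $K_1(1,\dots,1,\cdot)=1$. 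This is exactly where the constant $\half$ in \eqref{CFL1} is consumed, not in the $v$-step as you suggest.

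A secondary misattribution: \eqref{CFL2} plays no role in the $\alpha$ bound. It is needed in the $v\ge 0$ step, where the coefficient of $v_{\iph,L}^n$ in $v_i^{n,*}$ is $\half-\lambda\,|\alpha_{\iph,L}^n|+\Delta t\,(|\alpha_{\iph,L}^n|-1)$ --- the $\Delta t$-term coming from the source $S_{\iph,L}^n$ --- and its non-negativity for all $|\alpha_{\iph,L}^n|\le 1$ is precisely the content of $\lambda\le\half-\Delta t$. Apart from these two points, your treatment of $v$ (non-negative incremental coefficients under the CFL restrictions, non-negative source contribution, and monotone $B$ from $f\ge 0$) matches the paper's case analysis, and the convexity/induction closure is the same.
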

\begin{proof}
    From \eqref{eq:alphark} we can write
\begin{align*}
\alpha_{i}^{n,*}&=\half ({\alpha_{ i+ \half, L}^{n}+\alpha_{i-\half,R}^{n}}{})-  \lambda ( G_{ i+ 1/2}^{n} - G_{i-1/2}^{n}) \\
&=K_1 (\alpha_{i-\half,L}^{n},\alpha_{i-\half, R}^{n},\alpha_{ i+ \half,L}^{n}, \alpha_{ i+\half, R}^{n},v_{i-\half,L}^{n},v_{i-\half,R}^{n}, v_{ i+ \half, L}^{n},v_{ i+ \half,R}^{n}) 
\end{align*}
Now, we aim to show that $K_1$ is non-decreasing in $ \alpha_{i\pm\half,L}^{n}$ and $\alpha_{i\pm \half, R}^{n}$ under the  CFL conditions (\ref{CFL1}).
For each $i,$ we have 
\begin{align*}
\displaystyle\frac{\partial G_{ i+ \half}^{n}}{\partial \alpha_{i-\half,L}^{n}}&=0 \\  
\displaystyle\frac{\partial G_{ i+ \half}^{n}}{\partial \alpha_{ i+ \half, L}^{n}}&=
\left\{ \begin{array}{lll}
v_{ i+ \half,L}^{n}&\mbox{if}\, \, \, G_{ i+ \half}^{n}= (|\max (\alpha_{ i+\half, L}^{n}, 0)|-1)v_{\iph,L}^{n}\, \, \mbox{and}\, \, \alpha_{ i+ \half, L}^{n}>0 \\ 
0&\mbox{otherwise}
\end{array}\right.\label{HH1} 
\end{align*}
and
\begin{equation*}\label{HH2} \displaystyle\frac{\partial G_{ i+ \half}^{n}}{\partial \alpha_{ i+ \half, R}^{n}}=
\left\{ \begin{array}{lll}
-v_{ i+ \half, R}^{n}&\mbox{if}\, \, \, G_{ i+ \half}^{n}= (|\min (\alpha_{ i+ \half,R}^{n}, 0)|-1)v_{i+\half, R}^{n}\, \, \mbox{and}\, \, \alpha_{ i+ \half,R}^{n}<0 \\
0&\mbox{otherwise}
\end{array}\right.
\end{equation*}
This shows that $G^n_{ i+ \half}$ is non-decreasing in $\alpha_{ i+ \half, L}^{n}$ and non-increasing in $\alpha_{ i+ \half, R}^{n}$. Hence, we have\\
$$\displaystyle\hspace{-.3cm} \frac{\partial K_1}{\partial \alpha_{i-\half, L}^{n}}=\lambda\displaystyle \frac{\partial G_{i-\half}^{n}}{\partial \alpha_{i-\half,L}^{n}}\ge0 \mbox{ and }
\displaystyle\hspace{.2cm}\frac{\partial K_1}{\partial \alpha_{ i+ \half, R}^{n}}=-\lambda\displaystyle \frac{\partial G_{ i+ \half}^{n}}{\partial \alpha_{ i+ \half,R}^{n}}\ge0$$ Further, using the CFL condition (\ref{CFL1}), it yields
$$\displaystyle\hspace{.2cm} \frac{\partial K_1}{\partial \alpha_{ i+ \half,L}^{n}}=\half-\lambda \frac{\partial G_{ i+ \half}^{n}}{\partial \alpha_{ i+ \half, L}^{n}}=\half- \lambda v_{ i+ \half, L}^{n} \ge 0 \; \mbox{ and}
\displaystyle\hspace{.2cm} \frac{\partial K_1}{\partial \alpha_{i-\half, R}^{n}}=\half+\lambda \frac{\partial G_{i-1/2}^{n}}{\partial \alpha_{i-\half,R}^{n}}=\half-\lambda v_{i- \half,R}^{n} \ge 0 $$ 
This proves that   $K_1$ is increasing in  each of its variables  $ \alpha_{i\pm\half,L}^{n}$ and $\alpha_{i\pm \half, R}^{n}$   under the CFL condition (\ref{CFL1}). Thus we can write
\begin{align*}
-1&=K_1 (-1, -1, -1,-1, v_{i-\half,L}^{n},v_{i-\half, R}^{n}, v_{ i+\half,L}^{n},v_{ i+ \half,R}^{n})\no \\
&\leq K_1 (\alpha_{i-\half,L}^{n},\alpha_{i-\half,R}^{n},\alpha_{ i+\half,L}^{n}, \alpha_{ i+ \half,R}^{n},v_{i-\half,L}^{n},v_{i-\half,R}^{n}, v_{ i+\half,L}^{n},v_{ i+ \half,R}^{n})\\ &=\alpha_i^{n,*}
\leq  K_1 (1, 1, 1,1, v_{i-\half,L}^{n},v_{i-\half,R}^{n}, v_{ i+\half,L}^{n},v_{ i+ \half,R}^{n})=1\no \end{align*}
i.e., 
\begin{equation*}
-1\leq\alpha_{i}^{n,*}\leq1
\end{equation*}
By setting 
\begin{eqnarray*}
\alpha_i^{n,**}=K_1 (\alpha_{i-1/2 L}^{n,*},\alpha_{i-1/2 R}^{n,*},\alpha_{ i+ \half, L}^{n,*}, \alpha_{ i+ \half,R}^{n,*},v_{i-\half, L}^{n,*},v_{i-\half,R}^{n,*}, v_{ i+\half,L}^{n,*},v_{ i+\half,R}^{n,*})
\end{eqnarray*}
under the CFL condition (\ref{CFL1}), it can be similarly shown that
\begin{equation*}
-1\leq\alpha_{i}^{n,**}\leq1
\end{equation*} 
Since 
\begin{align*}
    \alpha_i^{n+1} = \half ( \alpha_i^{n}+\alpha_i^{n,**})
\end{align*}
it follows that
\begin{equation*}
 -1\leq\alpha_{i}^{n+1}\leq1
\end{equation*}
under the CFL condition (\ref{CFL1}). 
Note that $ G_{{ i+ \half}}^n \le 0$ as we have $-1 \le \alpha^n_{i\pm \half,L}, \alpha^n_{i\pm \half,R} \le 1.$ Consequently, from the expression
\begin{align*}
    u^{n,*}_\iph = u^n_\iph -\Delta t G_\iph^n
\end{align*}
it gives that   $u^{n,*}_\iph \ge u^n_\iph.$
In the similar way, given that $-1\le \alpha^{n,*}_{i\pm \half,L}, \alpha^{n,*}_{i\pm \half,R} \le 1,$ we can see that  
$u^{n,**}_\iph \ge u^{n,*}_\iph  \ge u^n_\iph.$
Further, 
\begin{align*}
    u^{n+1}_{i+\half} = \half ( u^{n}_\iph +u^{n,**}_\iph) \ge u^{n}_\iph
\end{align*}
This proves \eqref{P1}.

Now, let us consider the equation 
 \[v_i^{n,*}=v_{i}^{n}-\lambda (H_{i+\half}^{n}-H_{i-\half}^{n})+\Delta t v_{ i+ \half,L}^{n} (|\alpha_{ i+ \half,L}^{n}|-1) \] 
and denote
 \[ v_i^{n,*}=K_2 (\alpha_{i-\half,L}^{n},\alpha_{i-\half, R}^{n},\alpha_{ i+ \half, L}^{n}, \alpha_{ i+ \half,R}^{n},v_{i-\half, L}^{n},v_{i-\half,R}^{n}, v_{ i+\half, L}^{n},v_{ i+ \half,R}^{n})\]
 To prove that $K_2$ is non-decreasing in each variables $v_{i\pm \half, L}^{n}$ and $v_{i\pm \half, R}^{n},$ we consider the following two cases and  other cases follow in a  similar manner.

 \textbf{Case 1: } $-\alpha_{i\pm \half,L}^{n}\ge0, -\alpha_{i\pm \half, R}^{n} \geq 0$.\label{Case 1} With these condition, the numerical fluxes now read as 
 \begin{align*}
     H_{i\pm \half}^{n}&=-\alpha_{ i\pm \half,L}^{n}v_{ i\pm \half,L}^{n}-B_{ i\pm \half,L}
 \end{align*}
 and we have \begin{align*}
    K_2&=\half ({v_{ i+ \half,L}^{n}+v_{i-\half,R}^{n}}{})-\lambda (-\alpha_{ i+ \half,L}^{n}v_{ i+ \half,L}^{n}-B_{ i+ \half,L})\\&\quad+\lambda (-\alpha_{i-\half,L}^{n}v_{i-\half, L}^{n}-B_{i-\half,L}))+\Delta tv_{ i+ \half,L}^{n} (|\alpha_{ i+ \half,L}^{n}|-1)
\end{align*} Upon differentiation, we find:
\begin{align*}
   \frac{\partial K_2}{\partial v_{i-\half, L}^{n}}&=\displaystyle \frac{\partial H_{i-\half}^{n}}{\partial v_{i-\half,L}^{n}}=-\lambda\alpha_{i-\half,L}^{n}\geq0\\
   \frac{\partial K_2}{\partial v_{ i+ \half,R}^{n}}&=0,\quad 
   \frac{\partial K_2}{\partial v_{i-\half,R}^{n}}=\half \\
   \frac{\partial K_2}{\partial v_{ i+ \half, L}^{n}}&=\half-\lambda (-\alpha_{ i+ \half, K_2}^{n})+\Delta t (|\alpha_{ i+ \half, L}^{n}|-1)\\
   &=\half-\lambda|\alpha_{ i+\half,L}^{n}|+\Delta t (|\alpha_{ i+ \half,L}^{n}|-1)
\end{align*}
Further, together with the CFL condition \eqref{CFL2} and  the assumption $|\alpha_{ i+\half, L}^{n}| \leq 1,$ it is easy to see that
$$|\alpha_{ i+ \half, L}^{n}|\lambda\leq\displaystyle \half+\Delta t (|\alpha_{ i+ \half,L}^{n}|-1)$$ This inequality leads to 
$\displaystyle \frac{\partial K_2}{\partial v_{ i+\half, L}^{n}}\geq0$ in the above expressions. Therefore, it can be concluded that under the CFL condition \eqref{CFL2} $K_2$ is non-decreasing in each of its variables $v_{i\pm \half, L}^{n}$ and $v_{i\pm \half, R}^{n}.$ \\
 \textbf{Case 2}: $-\alpha_{i-\half, L}^{n}\ge0, -\alpha_{ i+ \half, L}^{n}\le 0, -\alpha_{i\pm \half, R}^{n}\le0.$ \label{Case 3}
We prove for the case $-\alpha_{i-\half, R}^{n}<0$ and $v_{i-\half, R}^{n}=v_{i-\half, L}^{n},$ and other cases can be proved similarly. The numerical fluxes are given by
\begin{align*}
   H_{i+\half}^{n}&=-\alpha_{ i+ \half, R}^{n}v_{ i+ \half, R}^{n}-B_{{ i+\half,R}}\\
  H_{i-\half}^{n}&=\half (
\displaystyle {-\alpha_{i-\half, L}^{n}v_{i-\half, L}^{n}-B_{i-\half,L}-\alpha_{i-\half, R}^{n}v_{i-\half, R}^{n}-B_{i-\half,R }}{})
\end{align*}
and
\begin{align*}
    K_2&=\half ({v_{ i+ \half, L}^{n}+v_{i-\half, R}^{n}}{})+\frac{\lambda}{2} ({-\alpha_{i-\half, L}^{n}v_{i-\half, L}^{n}-B_{i-\half,L}-\alpha_{i-\half, R}^{n}v_{i-\half, R}^{n}-B_{i-\half,R}}{}) \\
    &\quad-\lambda (-\alpha_{ i+ \half, R}^{n}v_{ i+ \half, R}^{n}-B_{{ i+ \half,R}}) 
+\Delta t v_{ i+ \half, L}^{n} (\alpha_{ i+ \half,L}^{n}-1)
\end{align*}
 Upon differentiating and by (\ref{CFL2}) it yields
 \begin{align*}
      \frac{\partial K_2}{\partial v_{i-\half, L}^{n}}&=\displaystyle \frac{\partial H_{i-\half,}^{n}}{\partial v_{i-\half, L}^{n}}=-\frac{\lambda\alpha_{i-\half, L}^{n}}{2}\geq0\\
      \frac{\partial K_2}{\partial v_{i-\half,L}^{n}}&=\displaystyle \frac{\partial H_{i-\half}^{n}}{\partial v_{i-\half, L}^{n}}=-\frac{\lambda\alpha_{i-\half, L}^{n}}{2}\geq0\\
      \frac{\partial K_2}{\partial v_{ i+ \half, R}^{n}}&=\lambda\alpha_{ i+ \half,R}^{n}\ge0\\
      \frac{\partial K_2}{\partial v_{ i+ \half, L}^{n}}&=\half+\Delta t (\alpha_{ i+ \half, L}^{n}-1)\ge 0\\
      \displaystyle \frac{\partial K_2}{\partial v_{i-\half, R}^{n}}&=\half-\frac{\lambda \alpha_{i-\half, R}^{n}}{2} \ge 0
 \end{align*}
This proves that $K_2$  is non-decreasing in each of its variables $v^n_{i\pm \half,L}$ and $v^n_{i\pm \half,R}$  under the condition (\ref{CFL2}). Consequently, we obtain $v^{n,*}_i\ge 0$ as: 
\begin{align*}
 0&\le K_2 (\alpha_{i-\half, L}^{n},\alpha_{i-\half, R}^{n},\alpha_{ i+ \half, L}^{n}, \alpha_{ i+\half, R}^{n},v_{i-\half,L}^{n},0, 0,0) \no \\
 &\le K_2 (\alpha_{i-\half, L}^{n},\alpha_{i-\half,R}^{n},\alpha_{ i+ \half,L}^{n}, \alpha_{ i+ \half, R}^{n},v_{i-\half, L}^{n},v_{i-\half,R}^{n}, v_{ i+ \half,L}^{n},v_{ i+ \half, R}^{n})=v_i^{n,*} \end{align*}
In  a similar way  and using the fact that $v^{n,*}_i\ge 0,$ we can show that $v_{i}^{n,**}\geq0$. Subsequently, through (\ref{rkfalpha}), it follows  that $v_{i}^{n+1}\geq0.$ This completes the proof of \eqref{P0}.

\end{proof}

\section{Open table problem in one-dimension and well-balance property} \label{Well}
We now show that the the second-order scheme \eqref{rkfalpha} is not well-balanced in general,
 i.e. the numerical scheme does not capture the steady
 state solution of \eqref{EWd}. 
\begin{theorem}\label{NW}
    The second-order scheme \eqref{rkfalpha}  is well balanced in the state variable $u$ but not in $v$.
\end{theorem}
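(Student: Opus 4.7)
The plan is to treat $u$ and $v$ separately: for $u$ I will show that the numerical flux $G^n_{i+\half}$ vanishes identically at any discrete steady state, so both Runge--Kutta substages leave $u$ untouched and the convex average \eqref{rkfalpha} preserves it; for $v$ I will exhibit an explicit configuration on an open table in which the one-sided source $S^n_{i+\half,L}$ fails to cancel the MUSCL-reconstructed flux difference.

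For the $u$ part, at a discrete steady state one has $v^n_i\ge 0$, $|\alpha^n_i|\le 1$, and $|\alpha^n_i|=1$ on the support of $v^n$. Since each of the factors $|\max(\alpha,0)|-1$ and $|\min(\alpha,0)|-1$ is non-positive and $v\ge 0$, both arguments of the max defining $G$ are non-positive, so it suffices to verify that at every face at least one of them equals zero. In every region where $\alpha^n$ is locally constant the minmod slope $D\alpha^n_i$ vanishes, hence $\alpha^n_{i+\half,L}=\alpha^n_i$ and $\alpha^n_{i+\half,R}=\alpha^n_{i+1}$; similarly $Dv^n_i=0$ at cells lying just outside the support of $v^n$ because two of the three consecutive differences have opposite signs or are zero. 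A short case analysis over the three face types---interior of a constant-$\alpha$ region, the peak face, and the edge faces of the support of $v^n$---gives $G^n_{i+\half}=0$. Consequently $u^{n,\ast}_{i+\half}=u^n_{i+\half}$; applying the same argument to the starred quantities yields $u^{n,\ast\ast}_{i+\half}=u^n_{i+\half}$, and \eqref{rkfalpha} then gives $u^{n+1}_{i+\half}=u^n_{i+\half}$.

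For the $v$ part, I will produce a one-dimensional counterexample. Take the open-table problem on $(0,1)$ with constant source $f\equiv c>0$; the first-order steady pile peaks at $x^\ast=1/2$, and I place the grid so that $x_{i^\ast+\half}=1/2$. The discrete steady state then satisfies $\alpha^n_{i^\ast}=1$, $\alpha^n_{i^\ast+1}=-1$, and a piecewise-linear $v^n$ with $v^n_{i^\ast}=v^n_{i^\ast+1}=c\,\Delta x/2$. The crucial observation is that at cell $i^\ast+1$ the minmod differences of $v$ are $v^n_{i^\ast+1}-v^n_{i^\ast}=0$ and $v^n_{i^\ast+2}-v^n_{i^\ast+1}>0$, so $Dv^n_{i^\ast+1}=0$, whereas $B(x)=cx$ is strictly monotone across the peak and yields $DB_{i^\ast+1}>0$. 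Inserting these reconstructions into the relevant branches of $H$---the branch $-a<0$, $-c>0$ at the peak face and the branch $-a\ge 0$, $-c\ge 0$ at the next face---and using $|\alpha^n_{i^\ast+3/2,L}|=1$ to conclude $S^n_{i^\ast+3/2,L}=0$, a direct computation gives
\begin{equation*}
H^n_{i^\ast+3/2}-H^n_{i^\ast+\half} \;=\; -\frac{c\,\Delta x}{2},\qquad v^{n,\ast}_{i^\ast+1}-v^n_{i^\ast+1}\;=\;\frac{\Delta t\,c}{2}\;\neq\; 0,
\end{equation*}
so the first Runge--Kutta substage already fails to preserve $v^n_{i^\ast+1}$, which suffices to conclude that the scheme is not well-balanced in $v$.

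The main obstacle will be managing the piecewise definition of $H$ (and, to a lesser extent, of $G$) cleanly: both fluxes are defined by cases in the signs of the reconstructed $\alpha$'s, so the active branch must be identified carefully at each face. The conceptual essence is that the failure for $v$ originates from a genuinely second-order asymmetry---at a cell straddling the peak the minmod limiter kills the linear correction to $v$ but not that to $B$---while no such asymmetry can affect $u$ because $G$ is sensitive only to the sign and magnitude of $\alpha$, which is locally constant at any discrete steady state.
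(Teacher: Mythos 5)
Your overall strategy is the same as the paper's: for $u$ you argue that $G^n_{\iph}$ vanishes identically at the discrete steady state because the minmod slopes of $\alpha$ are zero and each branch of $G$ is then either exactly zero or non-positive, so both substages and the convex average leave $u$ fixed; for $v$ you exhibit, on the open table with constant source and the peak at a cell face, the mismatch between the limited reconstruction of $v$ (whose slope dies at the peak cells) and the reconstruction of $B$ (whose slope does not). This is precisely the mechanism the paper isolates, and your first-substage computation at the cell $i^\ast+1$ agrees with the paper's $v_i^{0,*}=v_i^0+\theta\Delta t$ for $K-1\le i\le K+1$ (your stated value $\Delta t\,c/2$ tacitly assumes $\theta=\half$; you should either say so or keep $\theta$ general, since for $\theta=0$ the scheme \emph{is} well-balanced and the theorem only fails for $\theta\ne 0$).

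There is, however, one genuine gap in the $v$ part: showing $v^{n,\ast}_{i^\ast+1}\ne v^n_{i^\ast+1}$ does \emph{not} by itself show the scheme is not well-balanced, because the scheme's output is $v^{n+1}_i=\half(v^n_i+v^{n,\ast\ast}_i)$, and the second Runge--Kutta substage acts on the perturbed state $v^{n,\ast}$ and could in principle cancel the first-stage discrepancy. The paper closes this by recomputing the limited slopes $Dv^{0,\ast}_i$ on the perturbed data, evaluating $H^{0,\ast}_{\iph}$ and its differences, and obtaining an explicit nonzero final increment, e.g.\ $v^1_K=v^0_K+\tfrac{\theta}{2}\bigl(\Delta t-\lambda(\Delta t-\Delta x)\bigr)=v^0_K+\tfrac{\theta}{2}\Delta t(2-\lambda)\ne v^0_K$ for $\theta\ne0$. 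You need to carry your computation through the second substage (or give some argument that the cancellation cannot occur) before you can conclude. The rest of the proposal, including the $u$ argument and the identification of the active branches of $H$ at the peak face and its neighbour, is correct and matches the paper.
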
\begin{proof}
Let  $ (\overline{u}, \overline{v})$  denote the steady state solution of \eqref{eq:deposit}-\eqref{eq:erosion2} (a solution of \eqref{EWd}  in one dimension).
We take the particular case of $f\equiv 1$ in $[0,1].$
In this case, the exact steady state solutions are given by
\begin{eqnarray}\label{A}
 (\overline{u},\overline{v}) (x)=\left\{\begin{array}{ccl}
 (x, \half-x)& \, \mbox{if}\, & x\in[0, \displaystyle \half] \\
 (1-x,x-\half)& \, \mbox{if}\, & x\in (\displaystyle \half, 1]
\end{array}\right.\end{eqnarray} From this we have
\begin{eqnarray*}\overline{\alpha} (x)=\left\{\begin{array}{ccl}
1& \, \mbox{if}\, & x\in[0, \displaystyle \half] \\
-1& \, \mbox{if}\, & x\in (\displaystyle \half, 1]
\end{array}\right.\end{eqnarray*}  where $\overline{\alpha}= \overline{u}_x.$ Now, the discrete values of the steady states are given by  
\begin{eqnarray}\label{eqd} (\overline{\alpha}_i,\overline{v}_i)=\begin{cases}
 (1,\half-x_i),\,\,\,&i\le K\\
 (-1,x_i-\half),\,\,\,&i>K
\end{cases}
\end{eqnarray} with $x_{K+\half}=0.5.$
 To prove that the scheme is  well-balanced in the state variable $u$ and not in $v$, we substitute the discrete form \eqref{eqd} in the second-order scheme as
an initial  data and show that
\begin{align*}
u^{1}_\iph &= u_\iph^0 = \overline{u}_\iph, \quad \mbox{ for all } i  \in\mathcal{M}
\end{align*}
and
\begin{align*}
v^1_{i} &\ne v^0_i = \overline{v}_i \quad \mbox{ for all } i\in\mathcal{M}\setminus\{0\}
\end{align*}

where $\overline{u}_\iph$ is the discrete value of the steady state $\overline{u}$ at $x_\iph.$ 
It is important to note that, $D{\alpha}_i^0=0$ in  \eqref{slope},  and  consequently we have $
{\alpha}_{i +\half,L}^{0}={\alpha}_{i-\half,R}^{0}={\alpha}_i^0.
$  This leads to the simplification:  
$$
G_{{ i+ \half}}^{0} = G ({\alpha}_{ i+ \half,L}^{0}, {v}_{ i+ \half,L}^{0}, {\alpha}_{ i+ \half,R}^{0}, {v}_{ i+ \half, R}^{0})=0
$$
This gives that $u^{0,*}_\iph = u^{0}_\iph =\overline u_\iph.$ Subsequently, we have $
   {\alpha}_{i}^{0,*}={\alpha}^0_i$
and hence $
{\alpha}_{i+\half, L}^{0,*}=
{\alpha}_{i-\half, R}^{0,*} = {\alpha}_i^{0,*}.$ This, in turn, implies 
\begin{align*}
    {u}_{\iph}^{0,**}={u}_\iph^{0,*} = \overline{u}_\iph
\end{align*}
and 
\begin{eqnarray*}\label{a2}{u}_\iph^{1}=\half ({u}^0_\iph+{u}_\iph^{0,**})=\overline{u}_\iph
\end{eqnarray*}
This shows that  the second-order scheme is well-balanced in $u.$
Now, let us consider the case for  ${v}:$
note that
 \begin{align}\label{v**}
v_{i}^{0}-v_{i-1}^0&= \begin{cases}
-\Delta x  ,\,\,\,&i\le K \\
\Delta x,\,\,\,&i\ge K+2\\
0,\,\,\,&i=K+1
\end{cases}
\end{align}
since $x_{K+\half}=\half.$ From the definiton,
we have
$$D{{v}}_i^0=2\theta\begin{cases}
-\Delta x,\,\,\,&i\le K-1\\
\Delta x,\,\,\,&i\ge K+2\\0,\,\,\,&i=K,K+1
\end{cases}$$
Now, we compute $v_i^{0,*}$ from the expression
\begin{align}
v^{0,*}_i&=v^0_i- \lambda (H_{{ i+ \half}}^{0}-H_{i-\half}^{0})+\dt S_{ i+ \half, L}^0
\label{eq:v*scheme}
\end{align}
First we observe that $S_{ i+ \half,L}^0=0\; \mbox{ for all } i\in \mathcal{M}$ as  we have $\abs{\alpha^0_{i+\half,L}}=1.$
Now, in the subsequent steps, we calculate the fluxes $H^0_{i\pm\half}$  in \eqref{eq:v*scheme}. For this, first we consider the slopes
\begin{eqnarray*}
    D{{B}}_i=2 \theta\,\mbox{\mbox{minmod}}\left (B_i -B_{i-1},\half ({B_{i+1}-B_{i-1}}), B_{i+1}-B_i\right)
\end{eqnarray*}
and note that $D{{B}}_i=2 \theta\Delta x,$ as $B_i=x_i.$ This results in the  following values 
$$B_{i+\half,L}=x_i+\theta\Delta x \mbox{ and } B_{i+\half,R}=x_{i+1}-\theta\Delta x$$ 
Given that 
\begin{align*}
    \alpha_i^0 =\overline\alpha_i^0=\begin{cases}
1,\,\,\,&i\le K\\
-1,\,\,\,&i>K
\end{cases}
\end{align*} 
we obtain ${\alpha}_{i +\half,L}^{0}={\alpha}_{i-\half,R}^{0} = \alpha_i^0.$ Now, using the values $\alpha_{i+\half,L}^0, \alpha_{i+\half,R}^0, v_{i+\half,L}^0, v_{i+\half,R}^0, B_{\iph,L}$  and $B_{\iph,R}$ in \eqref{hflux}, we write the numerical flux as
\begin{align*}
H_{i+\half}^0&=\begin{cases}
v^{0}_{\iph,L}-B_{\iph,L},\,\,\,&i \ge K+1 \\
-v^{0}_{\iph,R} -B_{\iph,R} ,\,\,\,&i \le K-1 \\
-\half ( B_{\iph,L} +B_{\iph,R}),\,\,\,&i=K
\end{cases}\\
&= \begin{cases}
v^0_i+\half Dv_i^0-x_{i} - \theta\Delta x,\,\,\,&i\ge K+1 \\-v^0_{i+1}+\half Dv^0_{i+1}-x_{i+1} +\theta\Delta x ,\,\,\,&i\le K-1 \\
-\half (x_i+x_{i+1}) ,\,\,\,&i=K
\end{cases}
\end{align*}
which reduces to
\begin{eqnarray*}
H_{i+\half}^0= -\half+\theta\begin{cases}
 0, \,\,\,&i\ge K+2 \\
-\Delta x, \,\,\,&i= K+1\\
2\Delta x, \,\,\,&i\le K-2 \\
 \Delta x, \,\,\,&i= K-1 \\
0, \,\,\,& i = K
\end{cases}
\end{eqnarray*}
By calculating the difference of the fluxes 
\begin{eqnarray*}
H_{i+\half}^0-H_{i-\half}^0= \theta\begin{cases}
0  ,\,\,\,&i\ge K+3, i\le K-2 \\
\Delta x,\,\,\,&i= K+2 \\
-\Delta x,\,\,\,&K-1 \le i \le K+1
\end{cases}
\end{eqnarray*}
and inserting in \eqref{eq:v*scheme}, it yields
\begin{eqnarray}\label{v*}
v_i^{0,*}=v_i^{0}  +\theta \begin{cases}
0,\,\,\,&i\ge K+3, i\le K-2 \\
-\Delta t,\,\,\,&i= K+2 \\
\Delta t,\,\,\,&K-1 \le i \le K+1
\end{cases}
\end{eqnarray}
Now, to compute $v_i^{0,**},$ we need the following slopes
\begin{eqnarray*}
    D{{v}}_i^{0,*}=2 \theta\,\mbox{\mbox{minmod}}\left ({{v}}_i^{0,*} -{{v}}_{i-1}^{0,*},\half ({v^{0,*}_{i+1}-v^{0,*}_{i-1}}), v^{0,*}_{i+1}-v^{0,*}_i\right)
\end{eqnarray*}
By using \eqref{v*} and \eqref{v**}, we have
\begin{align*}
Dv_i^{0,*}&= 2 \theta\begin{cases}
\,\mbox{\mbox{minmod}}\left (\Delta x+\theta\Delta t,\half (2\Delta x+\theta \Delta t),\Delta x\right) ,\,\,\,&i= K+3\\
\,\mbox{\mbox{minmod}}\left (-\Delta x,\half (\theta\Delta t -2\Delta x),-\Delta x+\theta\Delta t \right) ,\,\,\,&i= K-2\\
\Delta x,\,\,\,&i\ge K+4\\
-\Delta x,\,\,\,&i
\le K-3\\
2 \theta\,\mbox{\mbox{minmod}}\left (-\Delta x+\Delta t\theta ,\half (\theta\Delta t -2\Delta x),-\Delta x\right) ,&i= K-1 \\
\,\mbox{\mbox{minmod}}\left (\Delta x-2\theta\Delta t,\Delta x-\half\theta\Delta t),\Delta x+\theta\Delta t\right) ,\,\,\,&i= K+2\\ 
0 ,\,\,\,&i= K,K+1
\end{cases}\\
&= 2 \theta\begin{cases}
\Delta x,\,\,\,&i\ge K+3\\
\Delta x-2\theta\Delta t ,\,\,\,&i= K+2\\ 
0 ,\,\,\,&i= K,K+1\\
-\Delta x+\theta\Delta t,\,\,\,&i= K-1,K-2\\
 -\Delta x,\,\,\,&i\le  K-3
\end{cases}
\end{align*}
Using this values, we compute $v_{i+\half,L}^{0,*}$ and $v_{i+\half,R}^{0,*}$ and there by obtain the  flux $H_{i+\half}^{0,*}$ ( by \eqref{hflux}) as
\begin{align*}
H_{i+\half}^{0,*} & =  \begin{cases}
v^{0,*}_{\iph,L}-B_{\iph,L},\,\,\,&i \ge K+1 \\
-v^{0,*}_{\iph,R} -B_{\iph,R} ,\,\,\,&i \le K-1 \\
-\half ( B_{\iph,L} +B_{\iph,R}),\,\,\,&i=K
\end{cases}\\
&= \begin{cases}
v^{0,*}_i+\half Dv_i^{0,*}-x_{i} - \theta\Delta x,\,\,\,&i \ge K+1 \\
-v^{0,*}_{i+1}+\half Dv^{0,*}_{i+1}-x_{i+1} +\theta\Delta x ,\,\,\,&i \le K-1 \\
-\half (x_i+x_{i+1}) ,\,\,\,&i=K
\end{cases}
\end{align*}
By substituting  the value of $v_i^{0,*}$ from \eqref{v*} in the above expression, we get
\begin{align*}
H_{i+\half}^{0,*} &= \begin{cases}
v^{0}_i+\theta\Delta t-x_{i} - \theta\Delta x,\,\,\,&i=K+1 \\
v^{0}_i-\theta\Delta t+\theta\Delta x-2 \theta^2\Delta t-x_{i} - \theta\Delta x,\,\,\,&i= K+2 \\
v^{0}_i+\theta\Delta x-x_{i} - \theta\Delta x,\,\,\,&i\ge  K+3 \\-v^{0}_{K}-\theta\Delta t-x_{i+1} +\theta\Delta x ,\,\,\,&i= K-1 \\
-v^{0}_{K-1}-\theta\Delta t-\theta\Delta x+\theta^2\Delta t-x_{i+1} +\theta\Delta x ,\,\,\,&i= K-2 \\
-v^{0}_{K-2}-\theta\Delta x+\theta^2\Delta t-x_{i+1} +\theta\Delta x ,\,\,\,&i= K-3 \\
-v^{0}_{i+1}-\theta\Delta x-x_{i+1} +\theta\Delta x ,\,\,\,&i\le K-4 \\
-\half ,\,\,\,&i=K
\end{cases}
\end{align*}
 Now, taking the flux difference, we arrive at:
\begin{eqnarray*}
H_{i+\half}^{0,*}-H_{i-\half}^{0,*}= \theta\begin{cases}
\Delta t - \Delta x,\,\,\,&i=K+1 \\
-2\Delta t-2 \theta\Delta t+ \Delta x ,\,\,\,&i= K+2 \\
\Delta t+2\theta\Delta x,\,\,\,&i\ge  K+3 \\
\Delta x -\theta\Delta t,\,\,\,&i= K-1 \\
-\Delta t,\,\,\,&i= K-2 \\
\theta\Delta t,\,\,\,&i= K-3 \\
0,\,\,\,&i\le K-4 \\
\Delta t-\Delta x,\,\,\,&i=K
\end{cases}
\end{eqnarray*}
Finally, we get
\begin{eqnarray*}
v_{i}^{0,**}= v_{i}^{0,*}-\lambda\theta\begin{cases}
\Delta t - \Delta x,\,\,\,&i=K+1 \\
-2\Delta t-2 \theta\Delta t+ \Delta x ,\,\,\,&i= K+2 \\
\Delta t+2\theta\Delta x,\,\,\,&i\ge  K+3 \\
\Delta x -\theta\Delta t,\,\,\,&i= K-1 \\
-\Delta t,\,\,\,&i= K-2 \\
\theta\Delta t,\,\,\,&i= K-3 \\
0,\,\,\,&i\le K-4 \\
\Delta t-\Delta x,\,\,\,&i=K
\end{cases}
\end{eqnarray*}
Once again  using the value of $v_i^{0,*}$ from \eqref{v*} in the above expression, we deduce that
\begin{eqnarray*}
v_{i}^{0,**}= v_{i}^{0}+\theta\begin{cases}
\Delta t-\lambda (\Delta t - \Delta x),\,\,\,&i=K+1 \\
-\Delta t-\lambda (-2\Delta t-2 \theta\Delta t+ \Delta x) ,\,\,\,&i= K+2 \\
-\lambda (\Delta t+2\theta\Delta x),\,\,\,&i\ge  K+3 \\
+\Delta t-\lambda (\Delta x -\theta\Delta t),\,\,\,&i= K-1 \\
-\lambda (-\Delta t),\,\,\,&i= K-2 \\
-\lambda (\theta\Delta t),\,\,\,&i= K-3 \\
0,\,\,\,&i\le K-4 \\
\Delta t-\lambda (\Delta t-\Delta x),\,\,\,&i=K
\end{cases}
\end{eqnarray*}
Noting that  the solution $v$ at the single time step is given by
\begin{align*}
{v}_i^{1}&=\half ({{v}^{0}_{i}+{v}^{0,**}_{i}}{} )
\end{align*}
we finally arrive at:
\begin{align*}
{v}_i^{1}&= v_{i}^{0}+\frac{\theta}{2}\begin{cases}
\Delta t-\lambda (\Delta t - \Delta x),\,\,\,&i=K+1 \\
-\Delta t-\lambda (-2\Delta t-2 \theta\Delta t+ \Delta x) ,\,\,\,&i= K+2 \\
-\lambda (\Delta t+2\theta\Delta x),\,\,\,&i\ge  K+3 \\
\Delta t-\lambda (\Delta x -\theta\Delta t),\,\,\,&i= K-1 \\
-\lambda (-\Delta t),\,\,\,&i= K-2 \\
-\lambda (\theta\Delta t),\,\,\,&i= K-3 \\
0,\,\,\,&i\le K-4 \\
\Delta t-\lambda (\Delta t-\Delta x),\,\,\,&i=K
\end{cases}
\end{align*}
This shows that the second-order scheme \eqref{rkfalpha}  is not well balanced for $\theta\ne 0.$
\end{proof}
\section{Second-order adaptive scheme in one-dimension}\label{sec:adaptive1d}
To produce a well-balanced second-order scheme, we propose a modification to the second-order scheme \eqref{rkfalpha} based on the
idea introduced in \cite{michel2016well,ghitti2020fully,desveaux2022fully}. The main principle involves using the the second-order  scheme \eqref{rkfalpha} away from the steady states and reducing it to  the first-order scheme \eqref{first} near the
steady states. This modification results in a second-order scheme that is well-balanced. The measure of the closeness to a steady state
is determined through the following procedure.
We define a 
smooth function $\Theta$ such that
\begin{align}\label{eq:thetaindicator}
    \Theta (x)&:=\frac{x^2}{x^2+\Delta x^2},\, x\in\mathbb{R}.
    \end{align}Note that $\Theta (0)=0$ and $\Theta (x)\approx 1$ for $x\ne 0$  and sufficiently small $\Delta x.$
    For each $ (i,n)\in (\mathcal{M}\setminus\{0\})\times\mathcal{N}_T$, we define 
    \begin{align}
    \begin{split}
\Theta_i^n&:=\Theta (\mathcal{E}_i^n)\\\mathcal{E}_i^n&:=\mathcal{E}^n_{i-\half}+\mathcal{E}^n_{{ i+ \half}}\\\mathcal{E}^n_{{ i+ \half}}&:=\sqrt{ (G_{i+\half}^{n})^2+[H_{i+\half}^{n}]^2 }
    \end{split}
    \label{adapt1}
    \end{align} 
where
\begin{align*}
[H^n_{{ i+ \half}}]&:= H^n_{{ i+ \half}}-H^n_{i-\half}\\
H_{i+\half}^{n}&=H (\alpha_{i}^n, v_{i}^n, \alpha_{i+1}^n, v_{i+1}^n,B_{i},B_{i+1})\\
G_{i+\half}^{n}&=G (\alpha_{i}^n, v_{i}^n, \alpha_{i+1}^n, v_{i+1}^n)
\end{align*}
Here, $G$ and $H$ are given by (\ref{alphaflux}) and (\ref{hflux}), respectively.
In each Runge-Kutta stage, previously defined left and right states \eqref{slope1} are modified for the  linear function $z=\alpha,v$ and $B$  in $[x_{i-\half}, x_{{ i+ \half}}]$ as follows:
\begin{align} 
\label{eq:slopemodified1}
z_{i+\half,L}^{n}&=z_i^n+ \half \Theta_i^nDz^n_i,\quad 
z_{i-\half,R}^{n}=z_i^n- \half \Theta_i^nDz^n_i\\ 
\label{eq:slopemodified2}
z_{i+\half,L}^{n,*}&=z_i^{n,*}+ \half \Theta_i^nDz^{n,*}_i,\quad
z_{i-\half,R}^{n,*}=z_i^{n,*}- \half \Theta_i^nDz^{n,*}_i 
\end{align}
where $Dz^{n}_i,Dz^{n,*}_i$  are defined 
 as before in Section \ref{sec: An-approximation-Scheme}. Using these modified values \eqref{eq:slopemodified1} and \eqref{eq:slopemodified2}, we compute $u^{n,*}_\iph, u^{n,**}_\iph, v^{n,*}_{i}  $ and $v^{n,**}_{i}.$ The resulting adaptive second-order scheme is now expressed as
 \begin{equation}
  \label{eq:adapt1d}
    \begin{split}
u_\iph^{n+1}&=\half ({u_\iph^n+u_\iph^{n,**}}{}) \\
v_i^{n+1}&=\half ({v_i^n+v^{n,**}_{i}}{})
    \end{split}
\end{equation}

 \begin{theorem}
 The adaptive scheme  \eqref{eq:adapt1d} is well-balanced and  second-order accurate away from the steady state.
 \end{theorem}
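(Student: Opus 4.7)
The plan is to exploit the indicator $\Theta$ to drive the scheme back to the first-order (already well-balanced) scheme \eqref{first} exactly at a discrete steady state, while keeping $\Theta \approx 1$ away from it so that the MUSCL/SSP-RK2 second-order reconstruction is essentially untouched. Concretely, I would split the proof into a well-balanced part (where I use $\Theta_i^n = 0$) and an accuracy part (where I estimate $1 - \Theta_i^n$).

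For the well-balanced claim, I would start from a discrete steady state $(u_\iph^n, v_i^n)$ of the first-order scheme \eqref{first}. At such a state, the source $S_i^n = v_i^n(|\alpha_i^n|-1)$ vanishes (either $v_i^n=0$, or $|\alpha_i^n|=1$ on the transport set), and $F_1 = (|\alpha|-1)v$ is consistent with $0$ in the steady regime, so the first-order numerical flux gives $G_{i+\half}^n = 0$. The steady $v$-update of \eqref{first} then forces the first-order flux difference $H_{i+\half}^n - H_{i-\half}^n = 0$. Substituting both identities into \eqref{adapt1} yields $\mathcal{E}_{i+\half}^n = 0$ for every $i$, hence $\Theta_i^n = \Theta(0)=0$. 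With $\Theta_i^n=0$, the modified reconstructions \eqref{eq:slopemodified1} collapse to $z_{i+\half,L}^n = z_{i-\half,R}^n = z_i^n$ for $z\in\{\alpha,v,B\}$, so RK step 1 coincides literally with the first-order scheme and gives $u_\iph^{n,*}=u_\iph^n$, $v_i^{n,*}=v_i^n$. Since $Dz_i^{n,*}=Dz_i^n$ and the same factor $\Theta_i^n = 0$ is used in \eqref{eq:slopemodified2}, RK step 2 is also identical to a first-order step and yields $u_\iph^{n,**}=u_\iph^n$, $v_i^{n,**}=v_i^n$. The convex combination in \eqref{eq:adapt1d} then preserves the steady state.

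For the second-order accuracy away from the steady state, I would observe that $|\mathcal{E}_i^n|$ is bounded below by a positive constant uniformly in $\Delta x$ (for $\Delta x$ small enough relative to the distance to the steady state). From \eqref{eq:thetaindicator},
\begin{equation*}
1 - \Theta_i^n \;=\; \frac{\Delta x^2}{(\mathcal{E}_i^n)^2 + \Delta x^2} \;=\; O(\Delta x^2),
\end{equation*}
so the modified slopes $\Theta_i^n Dz_i^n$ differ from the standard MUSCL slopes $Dz_i^n = O(\Delta x)$ only by $O(\Delta x^3)$. Propagating this perturbation through the Lipschitz numerical fluxes $G$, $H$ and the two-stage SSP-RK combination in \eqref{eq:adapt1d}, one sees that the adaptive update agrees with the non-adaptive second-order update of Section~\ref{second} up to $O(\Delta x^3)$ local truncation error, preserving the second-order global accuracy already carried by the non-adaptive scheme.

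The main obstacle I expect is the first step: arguing cleanly that at a generic discrete steady state both $G_{i+\half}^n=0$ and the first-order flux jump $H_{i+\half}^n - H_{i-\half}^n$ vanish, especially at the index where $\alpha$ changes sign (the ``crest'' region handled by the branch of \eqref{hflux} proportional to $(-c e_1 + a e_2)/(c-a)$). The calculations in the proof of Theorem~\ref{NW} provide the template, but one must appeal to the steady-state structure $|\alpha|=1$ on $\{v>0\}$ together with the discrete eikonal/transport cancellations identified in \cite{adimurthi2016,adimurthi2016a} to cover all flux branches in \eqref{hflux} simultaneously. Once this bookkeeping is done, the rest of the proof is automatic from the design of $\Theta$.
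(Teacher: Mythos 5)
Your proposal is correct and follows essentially the same route as the paper: the well-balanced property of the first-order scheme forces $G_{i+\half}^0=0$ and $[H_{i+\half}^0]=0$ at the discrete steady state (using $S_i^0=0$ since $|\alpha_i^0|=1$), hence $\mathcal{E}_i^0=0$ and $\Theta_i^0=0$, collapsing both RK stages to the first-order update, while away from the steady state $\Theta_i^0\approx 1$ recovers the second-order scheme. Your quantitative estimate $1-\Theta_i^n=O(\Delta x^2)$ and the propagation through the Lipschitz fluxes is a more detailed version of the accuracy claim the paper simply asserts with a citation, but it is not a different argument.
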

 \begin{proof}
To prove that the scheme is well-balanced, it suffices to show that
\[ (u_\iph^1,v_i^1)= (u_\iph^0,v_i^0) = (\overline{u}_\iph,\overline{v}_i)\]
where   $ (\overline{u}_\iph,\overline{v}_i)$ are the discrete steady states, as given in the proof of Theorem \ref{NW}.
As the first-order scheme is well-balanced (see \cite{adimurthi2016a}), it follows that  $\Theta_i^{0} = 0.$  Consequently,  in the RK step-1 of the adaptive scheme \eqref{eq:adapt1d}, we obtain 
\begin{align*}
    u^{0,*}_\iph =u^0_\iph = \overline{u}_\iph, \quad
    v^{0,*}_i = v^0_i = \overline{v}_i \mbox{ and }
    \alpha^{0,*}_i = \alpha^0_i = \overline{\alpha}_i
\end{align*}
Similarly, in the RK step-2, we obtain, 
\begin{align*}
    u^{0,**}_\iph = \overline{u}_\iph \mbox{ and }
    v^{0,**}_i =\overline{v}_i
\end{align*}
Therefore, from the adaptive scheme \eqref{eq:adapt1d}, we get      ${u}_\iph^{1}=\overline{u}_\iph$
 and ${v}_i^{1}=\overline{v}_i.$ Further, we observe that $\mathcal{E}^0_i\ne 0$ away from the steady state, leading to the fact that $\Theta^{0}_i\approx 1$ for sufficiently small  $\Delta x$ (see \cite{desveaux2022fully,ghitti2020fully,michel2016well}). This shows that the adaptive scheme \eqref{eq:adapt1d} is second-order accurate away from the steady state. 
 \end{proof}

\section{Numerical schemes in two-dimensions}\label{sec: Scheme-for-2}
 In this section, we extend the scheme constructed in the previous sections to the two dimensional case. The system of equations in two dimensions is written as 

 \begin{align}
 \label{eq:2dsystema}
     u_t+ (\sqrt{u_x^2+u_y^2}-1)v &= 0, \quad \mbox{ in } \Omega\times (0,T] \\ 
     \label{eq:2dsystemb}
     v_t+ (- u_xv -B^x)_x+ (-u_yv -B^y)_y- (\sqrt{u_x^2+u_y^2}-1)v &= 0,\quad \mbox{ in } \Omega\times (0,T]\\ 
     u (x,y,0) = v (x,y,0) &= 0, \quad \mbox{ in } \Omega \nonumber
 \end{align}
 where
$B^x=B^x (x,y)=\int_0^x f_1 (\xi,y)d\xi, B^y (x,y) = \int_0^yf_2 (x,\xi)d\xi)$ with $f_1$ and $f_2$ are such that $f =f_1+f_2.$ The splitting of the source term $f$ will be explained later. The system \eqref{eq:2dsystema}-\eqref{eq:2dsystemb} is solved with boundary conditions as given in \eqref{HKw4}.  

We consider the domain  $\Omega= (0, 1)\times (0, 1)$. Define the space grid points along x-axis as $\displaystyle x_{i+{\half}}=i\Delta x, \Delta x>0, i\in\mathcal{M}$ 
and along y-axis as $\displaystyle y_{k+{\half}}=k\Delta y,\Delta y>0,k\in\mathcal{M}$ with $ (x_{\half}, y_{\half})= (0, 0)$ and $ (x_{M+{\half}}, y_{M+{\half}})= (1, 1)$. 
For simplicity, we set $\Delta x=\Delta y=h$  and
for $\Delta t>0$, define the time discretization points $t^n=n\Delta t, n\in\mathcal{N}_T$ with $\lambda=\Delta t/h.$ 
The numerical approximation of $u$ at the point $ (x_{i+{\half}}, y_{k+{\half}}), i, k\in\mathcal{M}$ at time $t^n$ is dented by $u_{i+{\half}, k+{\half}}^{n}.$  For $i, k\in\mathcal{M}\setminus\{0\}$,
 the solution $v$ in the cell $C_{i, k}$ at time $t^n$ is given by \[v_{i, k}^{n}=\displaystyle\frac{1}{h^2}\int_{C_{i, k}}v (x, y, t^n)dxdy.\]
 A pictorial illustration of the grid $C_{i,k}$ is given in Fig \ref{fig:grid}, where we suppress the time index $n$ for simplicity.
 \subsection{Second-order scheme}
 Similar to the one-dimensional case, we derive a second-order scheme in two dimensions by employing a MUSCL type spatial reconstruction in space and strong stability preserving  Runge-Kutta method in time. To begin with,
let $\alpha:= u_x$ and $\beta:= u_y,$ and  define their approximations as follows:

\begin{align*}
\alpha_{i, k+\half}^{n}&=\frac{u_{i+\half, k+\half}^{n}-u_{i-\half, k+\half}^{n}}{h},\;i\in \mathcal{M}\setminus\{0\},k\in\mathcal{M} \\\
\beta_{i+\half, k}^{n}&=\frac{u_{i+\half, k+\half}^{n}-u_{i+\half, k-\half}^{n}}{h},\; k\in \mathcal{M}\setminus\{0\},i\in\mathcal{M}
\label{xydir2}
\end{align*}
\begin{figure}[h!]
     \centering
         \centering
         \includegraphics[width=0.53\textwidth]{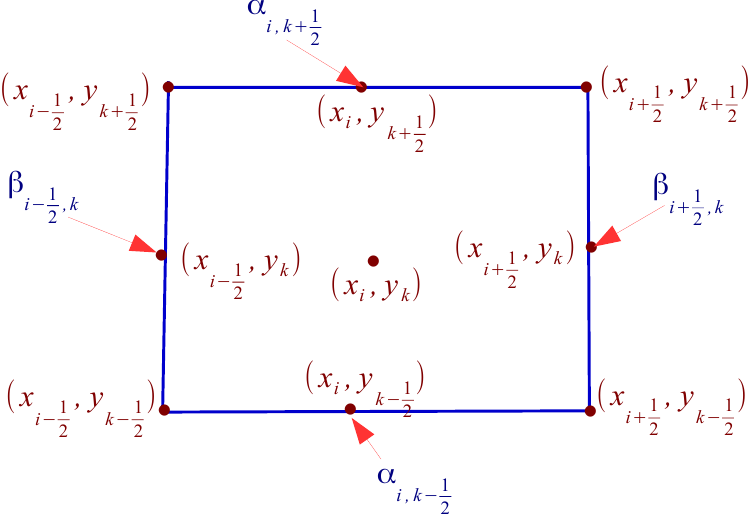}
\caption{An element $C_{i,k}$ of the two dimensional Cartesian grids }
\label{fig:grid}
\end{figure}
Next, we define the slopes corresponding to grid points   $i+\half$ and $k+\half$ as    
\begin{align*}
D\alpha^n_{i,k+\half}&={2 \theta \;\mbox{\mbox{minmod}}}\left (\alpha^n_{i+1,k+\half}-\alpha^n_{i,k+\half}, \half (\alpha^n_{i+1,k+\half}-\alpha^n_{i-1,k+\half}), \alpha^n_{i,k+\half}-\alpha^n_{i-1,k+\half}\right)
\end{align*}
for $i\in\mathcal{M}\setminus\{0\},\;k\in\mathcal{M}$
and 
\begin{align*}
D\beta_{i+\half,k}^n &=2 \theta \;\mbox{\mbox{minmod}} \left (\beta^n_{i+\half,k+1}-\beta^n_{i+\half,k}, \half (\beta^n_{i+\half,k+1}-\beta^n_{i+\half,k}), \beta^n_{i+\half,k}-\beta^n_{i+\half,k-1},\right)
\end{align*}
for $k\in\mathcal{M}\setminus\{0\}, i \in\mathcal{M}.$ Using this, we reconstruct
 piecewise linear functions with left and right end point values
\begin{eqnarray*}
\alpha^{n}_{{i+\half},L, k+\half} = \alpha^n_{i, k+\half} + \half {D\alpha^n_{i,k+\half}},&
\quad\alpha^{n}_{i-\half,R, k+\half} = \alpha^n_{i, k+\half}- \half {D\alpha_{i,k+\half}^n}{} \\
\beta^{n}_{i+\half, k+\half,L} = \beta^n_{i+\half, k}+ \half {D\beta_{i+\half,k}^n}{},&
\quad 
\beta^{n}_{i+\half, k-\half,R}=\beta^n_{i+\half, k}- \half {D\beta^n_{i+\half,k}}{}
\end{eqnarray*}
 {Now, we detail the linear  reconstruction of $v$ using the minmod limiter in $x$ and $y$ direction.} For the given values $v^n_{i,k}$  in the cell $ [x_{i-\half},x_{i+\half}] \times [y_{k-\half},y_{k+\half}],$ define the slopes as 
\begin{align*}
Dv_{i,k}^{n,x}&=2 \theta \;\mbox{\mbox{minmod}} \left (  v^n_{i+1,k}-v^n_{i,k},\half (v^n_{i+1,k}-v^n_{i-1,k}) v^n_{i,k}-v^n_{i-1,k}\right)\\
Dv_{i,k}^{n,y}&=2 \theta \;\mbox{\mbox{minmod}} \left (v^n_{i,k+1}-v^n_{i,k}, \half (v^n_{i,k+1}-v^n_{i,k-1}),v^n_{i,k}-v^n_{i,k-1}\right )
\end{align*}
and obtain the left and right values
\begin{equation}
\label{eq:vlimita}
\begin{aligned}
v^n_{i+\half,L, k} &= v^n_{i, k} +\half {Dv_{i,k}^{n,x}}{}, \quad 
v^n_{i-\half,R, k} = v^n_{i, k} - \half{Dv_{i,k}^{n,x}}{} \\
v^n_{i, k+\half,L} &= v^n_{i, k}+ \half {Dv_{i,k}^{n,y}}{},\quad  
v^n_{i, k-\half,R} = v^n_{i, k}- \half {Dv_{i,k}^{n,y}}{}
\end{aligned}
\end{equation}
\begin{eqnarray*}
v_{i+\half,L, k+\half}^{n}=\frac{v_{i+\half,L, k}^{n}+v_{i+\half,L, k+1}}{2},\;\;v_{i+\half,R, k+\half}^{n}=\frac{v_{i+\half,R, k}^{n}+v_{i+\half,R, k+1}^{n}}{2}\\
v_{i+\half, k+\half,L}^{n}=\frac{v_{i, k+\half,L}^{n}+v_{i+1, k+\half,L}^{n}}{2},\;\; v_{i+\half, k+\half,R}^{n}=\frac{v_{i, k+\half,R}^{n}+v_{i+1, k+\half,R}^{n}}{2}
\end{eqnarray*}
Note that in the $y$ direction, left and right values correspond to the values from the bottom and the top, respectively. Now, using this reconstructed values, we write the numerical fluxes at the $n$ th time step in the following lines. First, we write the approximation for $u_x$ and $u_y$ as
\begin{align*}
{ (u_x)^2 \approx \tilde  G_{i+\half, k+\half}^{n, x}=\Big (\max (|\max (\alpha_{i+\half,L, k+\half}^{n}, 0)|, |\min (\alpha_{i+\half,R, k+\half}^{n}, 0)|)\Big )^{2}}\\
 { (u_y)^2\approx \tilde G_{i+\half, k+\half}^{n, y}=\Big (\max (|\max (\beta_{i+\half,k+\half,L}^{n}, 0)|, |\min (\beta_{i+\half,k+\half,R,}^{n}, 0)|)\Big )^{2}}
\end{align*}
Define 
    { 
\begin{align*}
       {{W}_{i,k+\half}^{n,x}}&= (\sqrt{ (|\max (\alpha_{i+\half,L,k+\half}^{n} ,0)|)^2+\tilde G_{i+\half, k+\half}^{n, y}}-1)v^n_{i+\half,L,k+\half}\\
        {{W}_{i+1,k+\half}^{n,x}}&= (\sqrt{|\min (\alpha_{i+\half,R, k+\half}^{n}, 0)|^2+\tilde G_{\iph, k+\half}^{n, y}}-1)v^n_{{i+\half,R,k+\half}} \\
   {{W}_{i+\half,k}^{n,y}}&= (\sqrt{|\max (\beta^n_{{i+\half},{k+\half,}L}, 0)|^2+\tilde G_{i+\half, k+\half}^{n, x}}-1)v_{{i+\half,k+\half,}L}^{n} \\
    {{W}_{i+\half,k+1}^{n,y}}&= (\sqrt{|\min (\beta_{{i+\half, k+\half,}R}^{n}, 0)|^2+\tilde G_{i+\half, k+\half}^{n, x}}-1)v_{{i+\half, k+\half,}R}^{n}
  \end{align*}
Now, the numerical fluxes in the $x$ and $y-$ directions are given by {\[\label{W2}
       {{G}_{i+\half,k+\half}^{n,x}}=\max ( {W}_{i,k+\half}^{n,x}, {W}_{i+1,k+\half}^{n,x})\mbox{  and }
       {{G}_{i+\half,k+\half}^{n,y}}=\max ( {W}_{i+\half,k}^{n,y}, {W}_{i+\half,k+1}^{n,y})
      \]}
      respectively.
Finally, we write the term
 {
 \begin{equation}
 \label{eq:gflux2d}
 \begin{split}
 {G_{i+\half, k+\half}^{n}}
 &=\max ( {G}_{i+\half,k+\half}^{n,x}, {G}_{i+\half,k+\half}^{n,y})\\
 &=G ( \alpha^n_{i+\half,L,k+\half},\alpha^n_{i+\half,R,k+\half},\beta^n_{i+\half, k+\half,L}, \beta^n_{i+\half, k+\half,R},\\
 & \hspace{2cm} v^n_{i+\half,L,k+\half},v^n_{i+\half,R,k+\half}, v^n_{i+\half,k+\half,L}, v^n_{i+\half,k+\half,R} )
 \end{split}
 \end{equation}
 }
which approximates
\[
G (u, v)= (\sqrt{ (u_{x}^2+u_{y}^2}-1)v \] at $ (x_\iph, y_{k+\half})$ and $t^n$. 
For the approximation of $v,$ we consider the following flux  functions at the $n$ th stage:
\begin{equation}
\label{eq:hflux2d}
\begin{split}
H_{i+\half, k}^{n, x}&=H(\alpha_{i+ \half,L,k}^{n}, \alpha_{i+\half,R, k}^{n}, v_{i+\half,L,k}^{n}, v_{i+\half,R, k}^{n} , B_{i+\half,{L}, k}^{x}, B_{i+\half,{R}, k}^{x})\\
H_{i, k+\half}^{n, y}&=H (\beta_{i, k+\half,L}^{n}, \beta_{i, k+\half,R}^{n}, v_{i, k+\half, L}^{n}, v_{i, k+\half, R}^{n}, B_{i, k+\half,{L}}^{y}, B_{i, k+\half,{R}}^{y})
\end{split}
 \end{equation}

with
\begin{align*}
\alpha_{i+\half, L, k}^{n}& = \alpha_{i,k}^n +\half D\alpha^n_{i,k}\\
\alpha_{i-\half, R, k}^{n} &= \alpha_{i,k}^n -\half D\alpha^n_{i,k}\\
\beta_{i, k+\half,L}^{n} & = \beta_{i,k}^n + \half D \beta^n_{i,k} \\
\beta_{i, k-\half,R}^{n} & = \beta_{i,k}^n - \half D \beta^n_{i,k} 
\end{align*}
where the slopes are given by 
\begin{align*}
D\alpha^n_{i,k}& = 2\theta \mbox{ \mbox{minmod}}\left ( \alpha_{i+1,k}^n-\alpha_{i,k}^n, \half ( \alpha_{i+1,k}^n-\alpha_{i-1,k}^n), \alpha_{i,k}^n-\alpha_{i-1,k}^n \right)\\
D\beta_{i, k}^n & = 2 \theta  \mbox{ \mbox{minmod}}\left ( \beta_{i,k+1}^n-\beta_{i,k}^n,\half (\beta_{i,k+1}^n-\beta_{i,k-1}^n),
\beta_{i,k}^n-\beta_{i,k-1}^n\right)
\end{align*}
and $\alpha_{i,k}^n, \beta_{i,k}^n$ are computed as 
\begin{align*}
\alpha_{i,k}^n = \half ( \alpha_{i,k+\half}^n +\alpha_{i,k-\half}^n)\mbox{ and } \beta_{i,k}^n =
\half ( \beta_{i+\half, k} ^n+\beta_{i-\half, k}^n)
\end{align*}
 Further, the values $v_{i+\half,L,k}^n, v_{i+\half,R,k}^n, v_{i,k+\half,L}^n, v_{i,k+\half,R}^n$ are taken as in \eqref{eq:vlimita}. The terms $ B_{i+\half,{L}, k}^{x}, B_{i+\half,{R}, k}^{x}$ and  $ B_{i, k+\half,{L}}^{y}, B_{i, k+\half,{R}}^{y},$ are the approximations of $B^x$ and $B^y,$ respectively and are given in Section~\ref{sec:Bcompute}. 
 Using the  terms \eqref{eq:gflux2d} and fluxes in \eqref{eq:hflux2d}, we compute the approximations $u^{n,*}_{\iph,k+\half}$ and $v^{n,*}_{i,k}$ in the { RK Step-1} as 
 \begin{align*}\label{eq:vso_a}
\begin{split}
     u_{i+\half, k+\half}^{n,*}&=u_{i+\half, k+\half}^{n}-\Delta t G_{i+\half, k+\half}^{n},\\
     v_{i,k}^{n,*}&=v_{i,k}^n-\lambda\; (H_{i+\half, k}^{n, x}-H_{i-\half, k}^{n, x})-\lambda (H_{i, k+\half}^{n, y}-H_{i, k-\half}^{n, y}) \\&\quad+  \Delta t (\sqrt{ (\alpha^n_{i+\half,L,k})^2 + (\beta^n_{i,k+\half,L})^2-1 } )v^n_{i,k}
\end{split}
\end{align*}
We compute the values $u^{n,**}_{\iph,k+\half}$ and $v^{n,**}_{i,k}$ in the { RK Step-2} stage analogously to the one-dimensional case. Finally, the second-order scheme for  (\ref{HKw1}) in two-dimensions  is given by 
\begin{equation}
    \begin{split}
u_{\iph,k+\half}^{n+1}&=\half ({u_{\iph,k+\half}^n+u_{\iph+k+\half}^{n,**}}{})\label{rkfalpha2}  \\
v_{i,k}^{n+1}&=\half ({v_{i,k}^n+v^{n,**}_{i,k}}{}) 
    \end{split}
\end{equation}
\subsection{ Second-order adaptive scheme}
{Similar to the one-dimensional case, we now use the adaptation procedure on the second-order scheme \eqref{rkfalpha2} to ensure the well-balance property.} 
Now, define 
\begin{align*}
\mathcal{E}^n_{{ i+ \half}, { k+ \half}}&:=\sqrt{\Big|[H_{i+\half,k}^{n,x}]+[H_{i,k+\half}^{n,y}]\Big|^2+ (G_{i+\half,k+\half}^{n}})^2 
\end{align*}
where
\begin{align*}
[H^{n,x}_{{ i+ \half,k}}]&:= H^{n,x}_{{ i+ \half},k}-H^{n,x}_{i-\half,k}\\
[H^{n,y}_{{ i,k+\half}}]&:= H^{n,y}_{{ i,k+\half}}-H^{n,y}_{i,k-\half}
 \end{align*}
with the fluxes $G^n_{\iph,k+\half},$  $H_{i+\half, k}^{n, x}$ and $ H_{i+\half, k}^{n, y} $ are given by
\begin{align*}
    G^n_{i+\half,k+\half}& = G ( \alpha^n_{i,k+\half},\alpha^n_{i+1,k+\half},\beta^n_{i+\half, k}, \beta^n_{i+\half, k+1}, v^n_{i,k+\half},v^n_{i+1,k+\half}, v^n_{i+\half,k}, v^n_{i+\half,k+1} )\\
    H_{i+\half, k}^{n, x}&=H (\alpha_{i,k}^{n}, \alpha_{i+1, k}^{n}, v_{i,k}^{n}, v_{i+1, k}^{n} , B_{i, k}^{x}, B_{i+1, k}^{x})\\
H_{i, k+\half}^{n, y}&=H (\beta_{i, k}^{n}, \beta_{i, k+1}^{n}, v_{i, k}^{n}, v_{i, k+1}^{n}, B_{i, k}^{y}, B_{i, k+1}^{y})
\end{align*}
where $G$ and $H$  are computed as in \eqref{eq:gflux2d} and $\eqref{eq:hflux2d},$ respectively. 
Further, define the steady state indicator in each cell as 
\begin{eqnarray*}\Theta_{i,k}^n:= n\Theta_{i,k}^{n,x}+\Theta_{i,k}^{n,y}
\end{eqnarray*}
where 
\begin{eqnarray*}\Theta_{i,k}^{n,x}:=\Theta (\mathcal{E}_{i,k+\half}^n),\;\;\mathcal{E}_{i,k+\half}^n=\mathcal{E}^n_{i-\half,k+\half}+\mathcal{E}^n_{{ i+ \half,k+\half}}
\end{eqnarray*}
and 
\begin{eqnarray*}\Theta_{i,k}^{n,y}:=\Theta (\mathcal{E}_{i+\half,k}^n),\;\; \mathcal{E}_{i+\half,k}^n=       \mathcal{E}^n_{i+\half,k-\half}+\mathcal{E}^n_{{ i+ \half,k+\half}}
\end{eqnarray*} 
The function $\Theta$ is as defined in \eqref{eq:thetaindicator}. Finally, the adaptive second-order scheme is derived by modifying the  slopes using $\Theta^{n}_{i,k}$ following a similar approach as  in the one dimensional case.

\subsection{ Computation of the terms $B^x$ and $B^y$ in two dimensions}\label{sec:Bcompute} The computation of the terms $B^x$ and $B^y$ is  as given in \cite{adimurthi2016a,adimurthi2016}. For  completeness, we briefly review the construction in this section. Absorbing the source term with the convection terms is done by decomposing the  source function $f$ using the concept of \textit{Transport Rays} (blue lines in Fig. \ref{fig:transport}). This yields better results at the steady states, see \cite{adimurthi2016,adimurthi2016a}.
The terms $B^x$ and $B^y$ are defined as 
\begin{equation*}
B^{x} (x, y)=\intop_{0}^{x}f_{1} (\xi, y)d\xi\, \, \,\,\,\, \forall y \in[0, 1],\quad 
B^{y} (x, y)=\intop_{0}^{y}f_{2} (x, \xi)d\xi\, \, \,\,\, \forall x\in[0, 1]\label{eq: ByBy}
\end{equation*}
where $f_1$ and $f_2$ have to be chosen appropriately such that the source term $f=f_1+f_2$. The splitting is given as 
\begin{equation}\label{angler1}
 f_1=f\cos^2\varphi,\quad  f_2 =f\sin^2\varphi
\end{equation}
where $\varphi$ is 
 the angle which the transport ray $R_{ (x, y)}$ makes with the positive $x$-axis. In the case of open table problem, the transport rays are as given in Fig. \ref{fig:transport} (a) and  the functions $f_1$ and $f_2$ are determined as 
\begin{figure}
     \centering
     \begin{subfigure}[b]{0.36\textwidth}
         \centering
         \includegraphics[width=\textwidth]{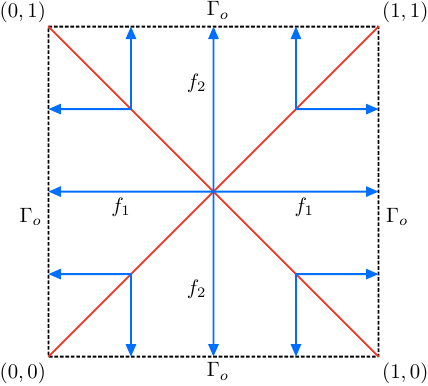}
         \caption{Open table problem}
     \end{subfigure}\hspace{0.6cm}
     \begin{subfigure}[b]{0.35\textwidth}
         \centering
         \includegraphics[width=\textwidth]{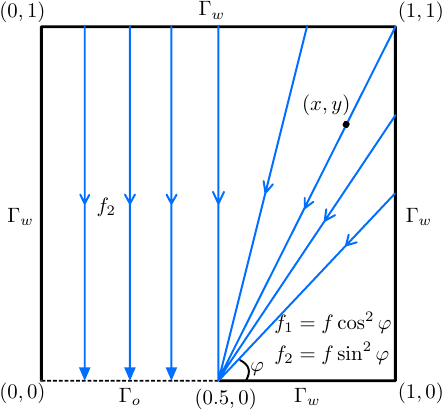}
         \caption{Partially open table problem}
     \end{subfigure}
     \caption{The domain $\Omega$ and the boundary $\Gamma$ with transport rays.}
     \label{fig:transport}
     \end{figure}

\begin{equation}\label{f1}
 f_{1} (x, y)=\left\{ \begin{array}{ccl}
f (x, y) & \, \, \mbox{if}\, \, & 	\{x\ge y, x+y\ge1\}\cup\{x\le y, x+y\le1\} \\\\ 	
0 & \, \, \mbox{otherwise}\, \, &
\end{array}\right.
\end{equation}
and 
\begin{equation}\label{f2}
f_{2} (x, y)=\left\{ \begin{array}{ccl}
f (x, y) & \, \, \mbox{if}\, \, & \{x\ge y, x+y\le1\}\cup\{x\le y, x+y\ge1\} \\\\ 
0 & \, \, \mbox{otherwise}\, \, &
\end{array}\right.
\end{equation}
For more details, refer to \cite{adimurthi2016a}.
In the case of  partially open table problem, illustrated in Fig. \ref{fig:transport} (b), the angle is computed as $\varphi= \tan^{-1} (\frac{y}{x-\half}),$ and the functions $f_1$ and $f_2$ are computed as in   \eqref{angler1}. The numerical approximation of $B^x$ and $B^y$ are given by 
\begin{equation}
B_{i,k}^{x}=\displaystyle \int_{0}^{x_{i}} f_{1} (\xi, y_{k})d\xi, \quad 
B_{i,k}^{y}=\displaystyle \int_{0}^{y_{k}} f_{2} (x_{i}, \xi)d\xi 
\end{equation}
and to approximate these integrals, we use composite trapezoidal rule. For more details, see \cite{adimurthi2016}.

\subsection{Boundary conditions}\label{sec:boundary}
Here, we consider two types of problems similar to the one dimensional case.

\noindent
\textbf{Case 1} (open table problem):
in this case, for computing the $u$ variable, we impose the boundary conditions by setting 

\begin{align*}
    u^n_{\half, k+\half} &=  u^n_{M+\half, k+\half} =0 \quad\\
    u^n_{i+\half,\half} &= u^n_{i+\half, M+\half}  =0, \quad i, k \in\mathcal{M}
\end{align*}
Moreover, to compute the fluxes at the interior vertices of the boundary cells, we need the slopes in the boundary cells. For simplicity, we set these slopes to zero. The boundary conditions for $v$ are prescribed through the  fluxes $H^x$ and $H^y$ at the boundary. As we set the slopes in all boundary cells to  zero, the corresponding fluxes are given by 
\begin{align*}
H_{\half, k}^{n, x}&=-\alpha_{1, k}^{n}v_{1, k}^{n}-B^{x}_{1, k}, \quad
H_{M+\half, k}^{n, x}=-\alpha_{M, k}^{n}v_{M, k}^{n}-B^{x}_{M, k} \\
H_{i, \half}^{n, y}&=-\beta_{i, 1}^{n}v_{i, 1}^{n}-B^{y}_{i, 1}, \quad
H_{i, M+\half}^{n, y}=-\beta_{i, M}^{n}v_{i, M}^{n}-B^{y}_{i, M},\quad i, k \in\mathcal{M}
\end{align*} 
 \noindent
 \textbf{Case 2} (partially open table problem):
we consider the partially open table problem in two-dimensions, where we choose the domain with wall boundaries as given in
Fig.\ref{fig:transport} (b) (see \cite{adimurthi2016}).
In this scenario, the evolution of the solution $u$ at the boundary vertices involves prescribing its values appropriately. For the solution $v,$ we impose boundary conditions through numerical fluxes. The conditions for $i, k \in\mathcal{M}$ are outlined in the following lines:
\begin{itemize}

\item Left vertical boundary
\begin{align*}
u^{n+1}_{\half,k+\half} &=
u^{n}_{\frac{3}{2},k+\half} +h  
\max (\alpha^n_{2,k+\half},0)\\
 H^{n,x}_{\half,j} &= \begin{cases}
-\alpha^n_{1,k}v^n_{1,k}-B^x (x_1,y_k), & \mbox{ if } \alpha^n_{1,k}\ge 0\\
-B^x (x_{0},y_k) & \mbox{otherwise}
\end{cases}
\end{align*} 
\item 
Right vertical boundary 
\begin{align*}
u^{n+1}_{M+\half,k+\half} &=  u^{n}_{M-\half,k+\half} + h
\max (\alpha^n_{M-1,k+\half},0)\\
H^{n,x}_{M+\half,k} &= \begin{cases}
-\alpha^n_{M,k}v^n_{M,k}-B^x (x_M,y_k), & \mbox{ if } \alpha^n_{M,k}\le 0\\
-B^x (x_{M+1},y_k) & {\mbox{otherwise}}
\end{cases}
\end{align*}
    \item Bottom horizontal boundary
\begin{align*}
u_{i+\half,\half}^{n+1}&=\left\{\begin{array}{ccl}
0&\, \, \mbox{if}\, \, & x_{i+\half}\leq 0.5 \\
u_{i+\half,\frac{3}{2}}^{n}+h\max (\beta_{i+\half,2}^n, 0)&\, \, \mbox{if}\, \, & x_{i+\half}>0.5
\end{array}\right.\\
 H^{n,y}_{i,\half} &= \begin{cases}
-\beta^n_{i,1}v^n_{i,1}-B^y (x_i,y_1), & \mbox{ if } x_{i+\half}\ge 0.5 \mbox{ and }  \beta_{i,1}\ge 0\\
-B^y (x_i,y_{-1}) & \mbox{ if } x_{i+\half}\ge 0.5  \mbox{ and }  \beta_{i,1}< 0\\
-\beta_{i,1}v^n_{i,1}-B^y (x_i,y_1), & \mbox{ if } x_{i+\half}> 0.5
\end{cases}
\end{align*}
\item Top horizontal boundary 
\begin{align*} 
u^{n+1}_{i+\half,M+\half} &= u^{n}_{i+\half,M-\half} +h  
\max (\beta^n_{i+\half, M-1},0)\\
 H^{n,y}_{i, M+\half} &= \begin{cases}
-\beta^n_{i,M}v^n_{i,M}-B^y (x_i,y_M), & \mbox{ if } \beta^n_{i,M}\le 0\\
-B^y (x_i,y_{M+1}) & {\mbox{otherwise}}
\end{cases}
\end{align*}
\end{itemize}
Finally, at the corner vertices of the rectangular domain, solution $u$ is computed by taking the average of solution computed through horizontal and vertical directions. 
Note that all the boundary conditions are imposed in each stage of the RK time stepping.
\section{Numerical experiments}\label{sec: Numerical-Experiment} We denote by FO, SO and SO-$\Theta$ first-order, second-order  and second-order adaptive schemes, respectively. Through out this section, we denote by  $||\cdot||_\infty$ and $||\cdot||_1,$ the supremum norm and the $L^1$ norm, respectively. 
\subsection{Examples in one-dimension (1D)}
In this section, we study  one-dimensional examples, specifically addressing the problem \eqref{eq:deposit}-\eqref{eq:erosion2} with the source function given by 
\begin{align}
f (x) = 0.5, \mbox{ for all } x \in [0,1]
\label{eq:1dfullsource}
\end{align}
in the computational domain $[0,1].$ Further, $u_s$ and $v_s$ denote the exact steady state solutions and are given by \begin{align}\label{B}\begin{split}
    u_s (x) &= \min (x, 1-x),\, x \in [0,1]\\
v_s (x) &= \begin{cases}
        \displaystyle\int_x^\frac{1}{2} f (\xi) d\xi& \mbox{ if }  x \in [0, \frac{1}{2}]  \\\\
        \displaystyle\int_\frac{1}{2}^x f (\xi) d\xi & \mbox{ if } x \in (\frac{1}{2},1] 
    \end{cases}\end{split}
\end{align}
where $f$ is the source function given by \eqref{eq:1dfullsource}.
We consider various test cases  in 1D to understand the significance of the proposed SO-$\Theta$ scheme. The boundary conditions in each case are employed as detailed in Section \ref{sec:foscheme} and \ref{sec:bcsocheme}. The initial conditions are set as $u = 0$ and $v=0$ in $[0,1].$ 
    
\textbf{Example 1 (convergence test case-1D)} In this test case, we verify the experimental order of convergence (E.O.C.) of the proposed  SO-$\Theta$ scheme  away from the steady state and compare it with that of FO and SO schemes. The source function is given by \eqref{eq:1dfullsource}.
We compute the solution at time $T = 1.3$ and  numerical solutions are evolved with a time step $\Delta t = 0.3 h $ ( i.e.,$\lambda = 0.3$). Since the exact solution of the problem is not available away from the steady state, the E.O.C. is computed using a reference solution which is obtained by the SO scheme with a fine mesh of size $\Delta x= 1/8000.$ We denote by $u_r$ and $ v_r$ the reference solutions corresponding to  $u$ and $v,$ respectively. The results are given in Table ~\ref{table:eoc1d}. 
\begin{table}[h!]
\begin{center}  {\small   \addtolength{\tabcolsep}{-2.8pt} 
\begin{tabular}{|l|l|l|l|l|} \hline
$\Delta x$ & $||u_{\Delta x} - u_r||_\infty$ & E.O.C. & $||v_{\Delta x}-v_r||_1$ & E.O.C.\\ 
\hline
\multicolumn{5}{|c|}{FO scheme}  \\\hline 
0.025   &        0.0122 &     -              &           0.0085 &      -               \\ \hline           
0.0125   &       0.0069  &    0.8122 &       0.0052 &     0.7100     \\ \hline    
0.00625   &      0.0040 &   0.7728&       0.0032  &     0.7022 \\ \hline        
0.003125  &      0.0023   &   0.7941 &       0.0019  &     0.7899  \\ \hline       
0.0015625 &      0.0013  &   0.8098 &       0.0011 &     0.7655\\ \hline  
\multicolumn{5}{|c|}{SO scheme}  \\\hline 
0.025  &         0.0058 &     -                     &    0.0049 &     -          \\ \hline                
0.0125  &        0.0028  &    1.03110     &   0.0024 &     1.0023  \\ \hline      
0.00625  &       0.0014  &   1.0422      &  0.0012  &     1.0237 \\ \hline       
0.003125  &      0.0007   &  1.0483       & 0.0006 &      1.0473  \\ \hline        
0.0015625  &     0.0003    & 1.0656      &0.0003 &    0.9933\\ \hline  
 \multicolumn{5}{|c|}{SO-$\Theta $ scheme}  \\\hline 
0.025  &         0.0062 &     -                   &      0.0050&      -    \\ \hline                    
0.0125  &        0.0030 &   1.0674  &       0.0025 &     1.0395\\ \hline      
0.00625  &       0.0014 &  1.0655 &       0.0012 &     1.0031  \\ \hline       
0.003125  &      0.0007  &  1.0633 &      0.0006 &     1.0423  \\ \hline     
0.0015625  &     0.0003 &   1.0749  &      0.0003 &     1.0053 \\ \hline
\end{tabular}
}
\end{center} 
\caption{Example~1 (1D): errors of 
  numerical solutions produced by FO, SO and SO-$\Theta$ schemes computed up to the final time $t = 1.3$ with a time step of $\Delta t = 0.3h.$}
\label{table:eoc1d}
\end{table}
From the definition of $\Theta$ in  \eqref{adapt1}, it becomes apparent that away from the steady state, both SO and SO-$\Theta$ schemes approach each other. This observation aligns with the findings from the numerical experiment, where both schemes exhibit nearly identical convergence rates.

Next, at the same time $T=1.3,$ we will compare the results  for a larger CFL, say $\lambda = 0.45,$  at  a value within the permissible limit as in Theorem \ref{stablity}. The solutions corresponding to FO, SO and SO-$\Theta$ schemes with ${\Delta x} = 1/100$  are compared against the same reference solution mentioned earlier. The results are given in Fig. \ref{fig:expbftime}.
\begin{figure}
     \centering
     \begin{subfigure}[b]{0.49\textwidth}
         \centering
         \includegraphics[width=\textwidth]{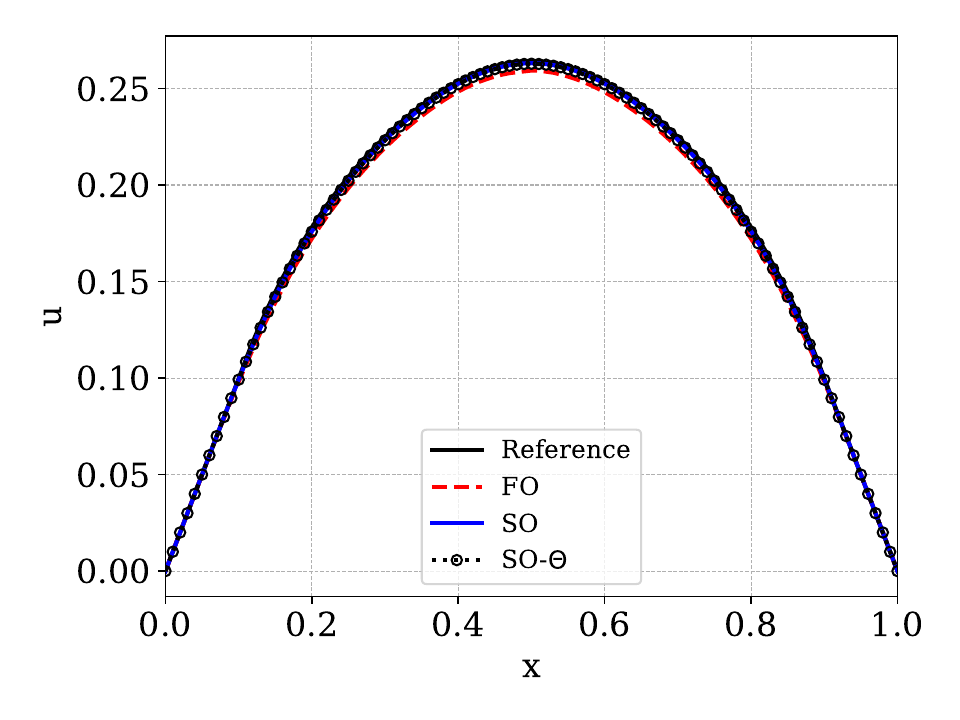}
         \caption{$u_{\Delta x}$}
     \end{subfigure}
     \begin{subfigure}[b]{0.49\textwidth}
         \centering
         \includegraphics[width=\textwidth]{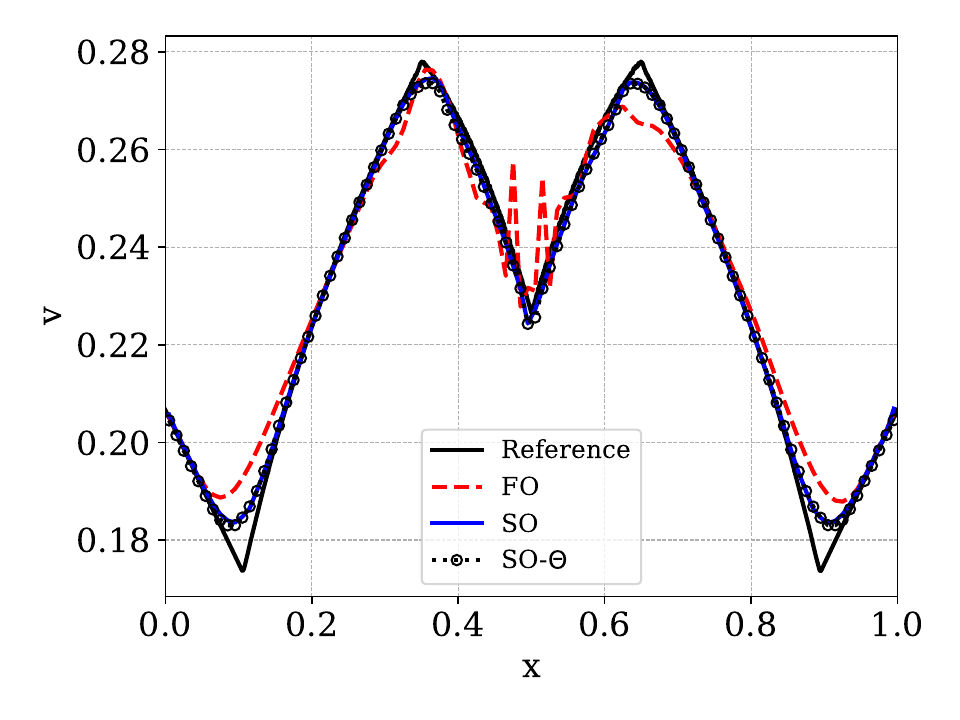}
         \caption{$v_{\Delta x}$}
     \end{subfigure}
\caption{Example 1 (1D): numerical solutions computed at time $t = 1.3.$  with $\Delta x = 1/100,\, \Delta t = 0.45 \Delta x.$ }
\label{fig:expbftime}
\end{figure} It is observed that, even though the FO scheme is stable with this $ \lambda= 0.45,$ it produces oscillations. This is in contrast to the SO and SO-$\Theta$ schemes. This indicates the robustness of the proposed SO-$\Theta$ schemes with larger values of $\lambda$ away from the steady state.



\textbf{Example 2 (well balance test case-1D)}\begin{table}[h!]
\begin{center}  {\small   \addtolength{\tabcolsep}{-2.8pt} 
\begin{tabular}{|l|l|l|} \hline
$\Delta x$ & $||u_{\Delta x}-\overline{u}||_\infty$ & $||v_{\Delta x}-\overline{v}||_1$ \\ 
\hline
\multicolumn{3}{|c|}{FO scheme}  \\\hline 
0.02  &          6.9388e-18   &                       7.0546e-18 \\ \hline                          
0.01  &          1.3878e-17 &                         8.7499e-17   \\ \hline         
0.005 &          1.3878e-17 &                          9.4632e-17   \\ \hline   
0.0025  &        6.9389e-18  &                          1.2179e-15 \\ \hline      
\multicolumn{3}{|c|}{SO scheme}  \\\hline 
   0.02    &        6.9389e-18      &                     0.0001       \\ \hline                 
0.01     &       6.9389e-18     &                       3.4875e-05\\ \hline           
0.005    &       1.3878e-17    &                     8.7188e-06    \\ \hline         
0.0025   &       6.9389e-18     &                       2.1797e-06  \\ \hline   
 \multicolumn{3}{|c|}{SO-$\Theta $ scheme}  \\\hline 
  0.02     &       6.9389e-18   &                      5.3950e-17    \\ \hline                         
0.01   &         6.9389e-18   &                         6.7966e-17   \\ \hline           
0.005  &         1.3878e-17  &                       7.2185e-17    \\ \hline        
0.0025  &        6.9389e-18   &                         6.6996e-16 \\ \hline    
\end{tabular}
}
\end{center} 
\caption{Example~2 (1D):  well-balance test for FO, SO and SO-$\Theta$ schemes. Numerical errors produced by FO, SO and SO-$\Theta$ schemes in a single time step, i.e., at the final time $T = \Delta t$ with $\Delta t = 0.45\Delta x.$}
\label{table:wellbalance1D}
\end{table}
The  purpose of this example is to illustrate the well-balance property of the  SO-$\Theta$ scheme for the problem \eqref{eq:deposit}-\eqref{eq:erosion2}. The simulations are carried out for a single time step, i.e., $T=\Delta t$, with $\lambda = 0.45,$ where the initial condition is set as the exact steady state solution \eqref{B}.

We compute the errors $||u_{\Delta x}-u_s||_\infty$ and $||v_{\Delta x}-v_s||_1$ for various values of $\Delta x$ and present the results in Table \ref{table:wellbalance1D}. It is observed that the SO-$\Theta$ scheme achieves the well-balance property, consistent with the FO scheme. In contrast, the SO scheme fails to capture this well-balance property.
This observation agrees with the result in  Lemma~\ref{NW}.

\textbf{Example 3 (steady state solution test case-1D)} In this example, we compute numerical solutions with the initial condition $u_0 = 0$ and $v_0 = 0,$ and evolve them up to the steady state level using the  FO, SO and SO-$\Theta$ schemes. \begin{figure}[h!]
     \centering
     \begin{subfigure}[b]{0.49\textwidth}
         \centering
         \includegraphics[width=\textwidth]{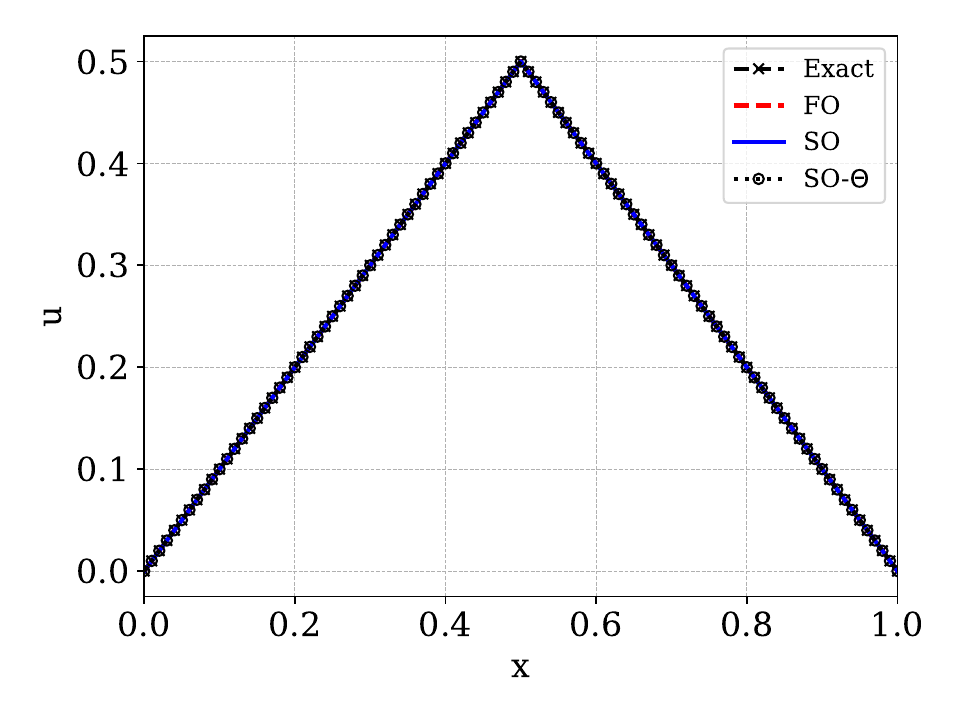}
         \caption{$u_{\Delta x}$}
         \label{u1}
     \end{subfigure}
     \begin{subfigure}[b]{0.49\textwidth}
         \centering
         \includegraphics[width=\textwidth]{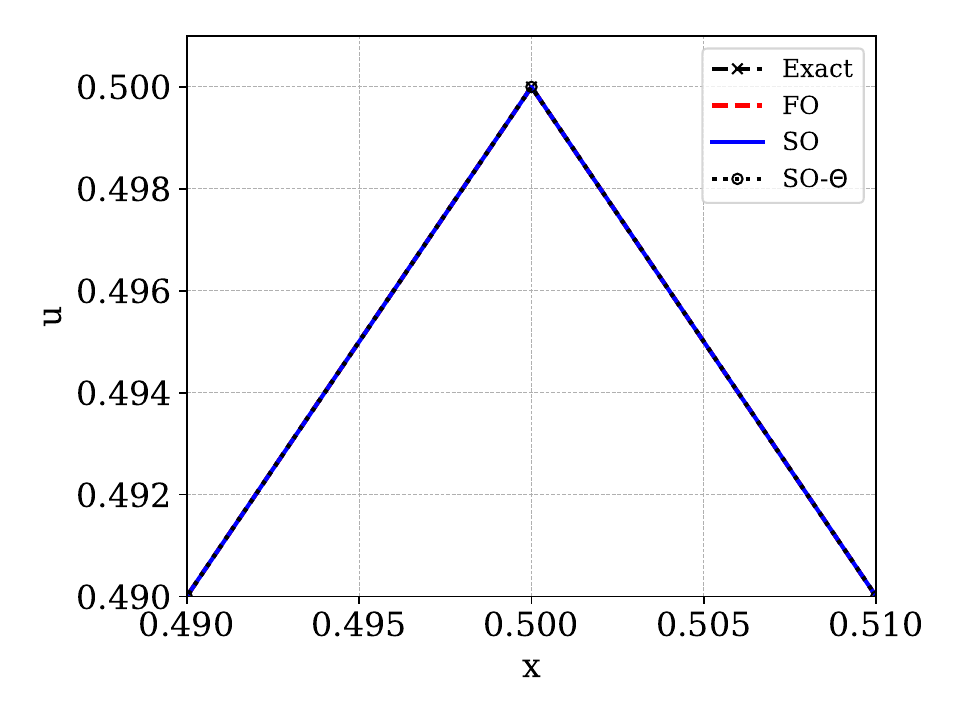}
         \caption{$u_{\Delta x},$ zoomed view}
           \label{u2}
     \end{subfigure}\\
     \begin{subfigure}[b]{0.49\textwidth}
         \centering
         \includegraphics[width=\textwidth]{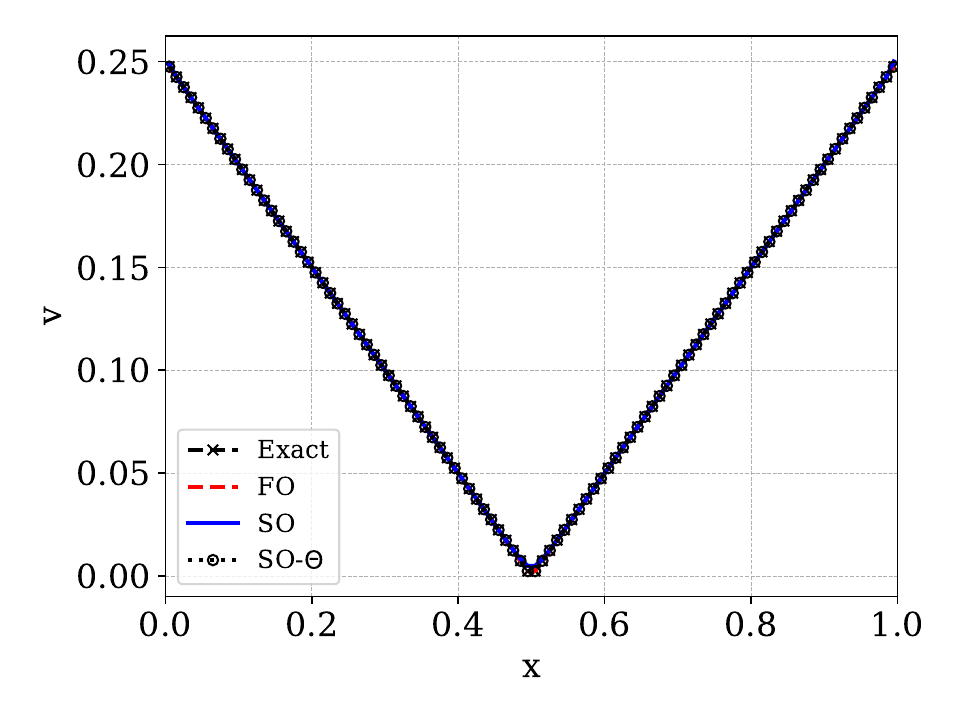}
         \caption{$v_{\Delta x}$}
         \label{fig:five over x}
     \end{subfigure}
     \begin{subfigure}[b]{0.49\textwidth}
         \centering
         \includegraphics[width=\textwidth]{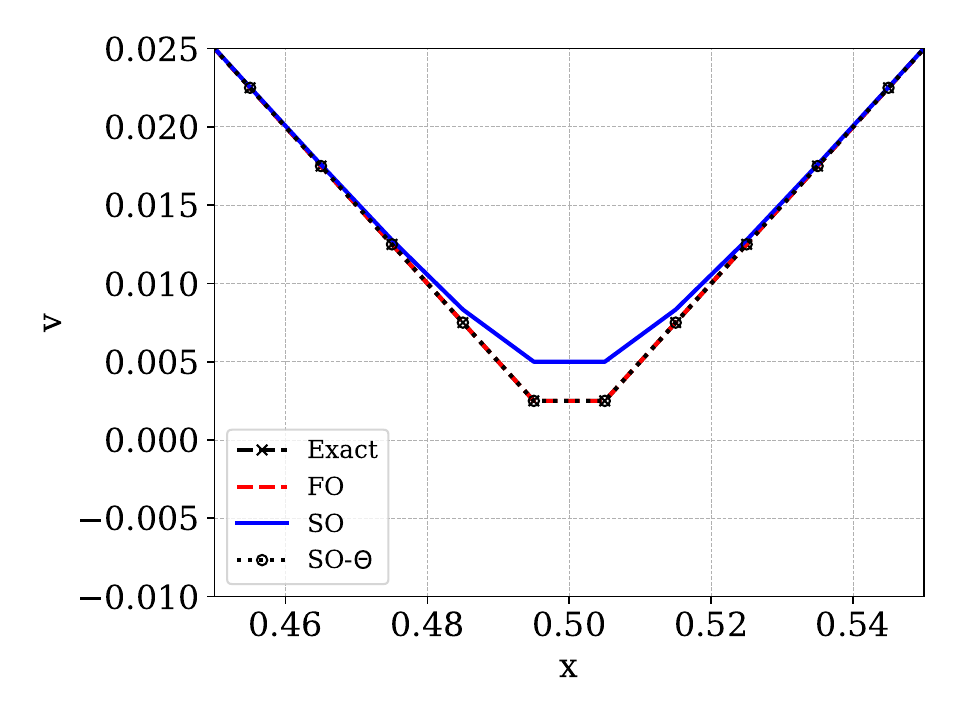}
         \caption{$v_{\Delta x},$ zoomed view}
     \end{subfigure}
\caption{Example 3 (1D): numerical solutions near the steady state, computed at time $T=450$ with $ \Delta x = 1/100, \Delta t = 0.45 \Delta x.$ (b) and (d) are enlarged view of (a) and (c), respectively. }
\label{fig:1dsteady}
\end{figure}For all the three schemes, the numerical solutions are computed at $T = 450$ with $\lambda =0.45$  and $\Delta x= 1/100.$  The numerical results are then compared with the exact steady state solution \eqref{B}, which are plotted through interpolation on the same mesh $\Delta x=1/100$ and are given in Fig.~\ref{fig:1dsteady}. The obtained results reveal that the FO and the adapted SO-$\Theta$ schemes align remarkably well with the steady state solution.   
On the other hand, the SO scheme, successfully reaches the  steady state for $u$ as shown in Fig.~\ref{fig:1dsteady} (a) and (b), but it exhibits poor performance for $v$, as shown in Fig.~\ref{fig:1dsteady} (c) and (d).
This emphasizes the significance of the adaptation strategy when employing a second-order scheme to accurately capture the steady state solution. 

\textbf{Example 4 (error versus number of iterations plots-1D)} 
In this example, we demostrate the efficiency of the SO-$\Theta$ scheme in reaching the steady state solution by showing that it takes less number of iterations compared to the FO scheme. To assess this, we plot the errors  $||u_{\Delta x}-u_s||_\infty$ and $||v_{\Delta x}-v_s||_1$ at each iteration against the number of iterations for solving the problem  \eqref{eq:deposit}-\eqref{eq:erosion2}. Here also we keep the same $\lambda = 0.45 $ with a mesh size of $\Delta x=1/100.$ We compare the outcomes from the FO, SO and SO-$\Theta$ schemes. The results are depicted in Fig. \ref{fig:errorvsiter1D}.
It is crucial to emphasize that the SO-$\Theta$ scheme reverts to first-order at the steady state. When comparing the solution $u,$ the SO scheme shows better performance than both the FO and SO-$\Theta$ schemes, but not in $v.$ The key challenge here is in achieving the steady state for $v$. In this context, the results reveal the efficiency of the SO-$\Theta$ scheme, which converges to the steady state with lesser iterations than the FO scheme, Fig. \ref{fig:errorvsiter1D} (a) and (b). Significantly, the non-adaptive SO scheme encounters difficulties in reaching the steady state for $v$. In conclusion, the SO-$\Theta$ scheme performs better compared to the FO scheme.
\begin{figure}
     \centering
     \begin{subfigure}[b]{0.49\textwidth}
         \centering
         \includegraphics[width=\textwidth]{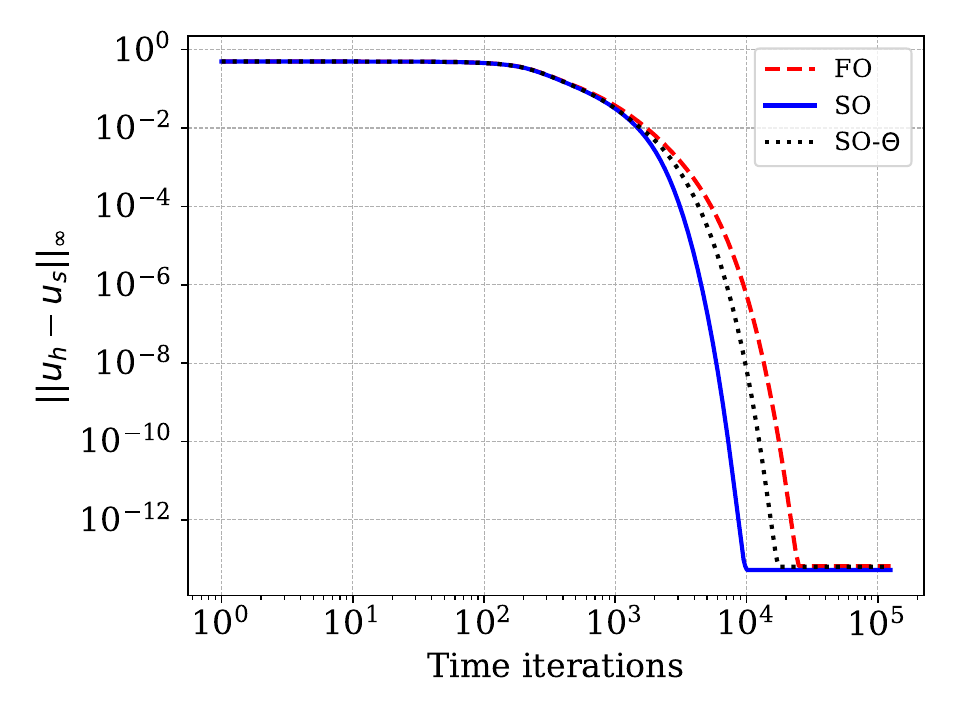}
         \caption{$u_{\Delta x}$}
     \end{subfigure}
     \begin{subfigure}[b]{0.49\textwidth}
         \centering
         \includegraphics[width=\textwidth]{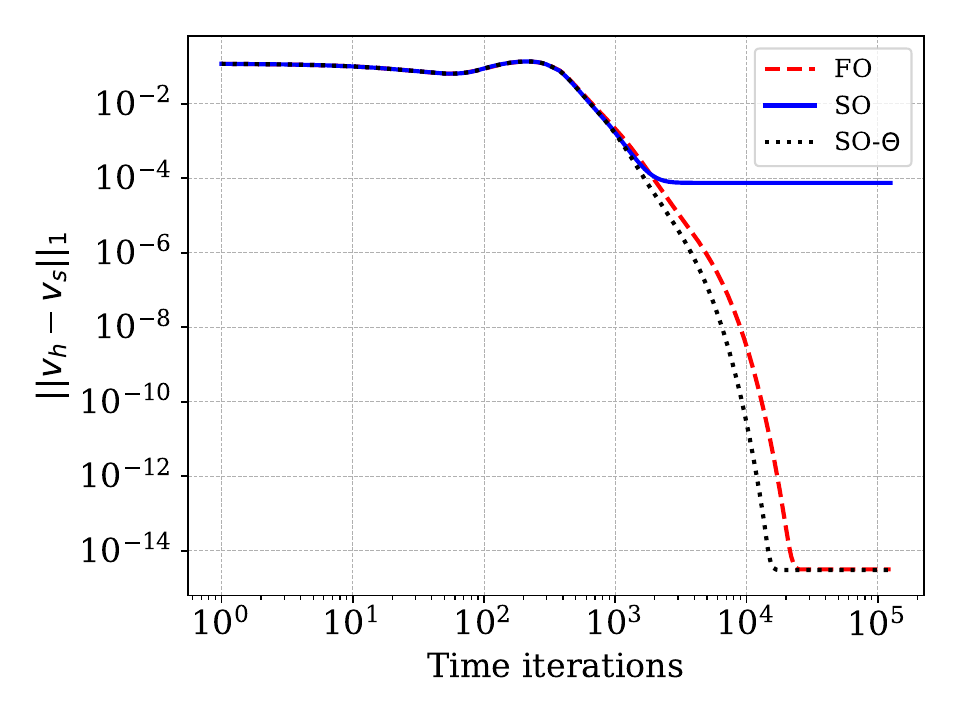}
         \caption{$v_{\Delta x}$}
     \end{subfigure}
\caption{Example 4 (1D): numerical errors versus number of time iterations plots. Numerical solutions are computed  till it reach near the state with $\Delta x  = 1/100, \Delta t = 0.45h.$ In each iteration, the errors produced by FO, SO and SO-$\Theta$ schemes are  compared.}
\label{fig:errorvsiter1D}
\end{figure}

\subsection{Examples in two-dimensions (2D)} 
We perform numerous numerical experiments in a two-dimensional setting, employing a computational domain $[0,1]\times [0,1].$ As mentioned previously, we discretize this domain into uniform Cartesian grids, denoted as $h = \Delta x = \Delta y$. Our focus here centers on investigating SO-$\Theta$ scheme for three specific types of problems in the two-dimensional context: those involving a complete source, scenarios with a discontinuous source, and problems related to partially open boundaries, detailed in \cite{adimurthi2016,adimurthi2016a}. The boundary conditions for each examples in this test cases are described in Section \ref{sec:boundary} and the initial conditions are set as $u =0 $ and $v = 0$ in $ (0,1)\times (0,1).$ In all the examples below we take the source function as $f=0.5$ on
$[0,1]\times [0,1].$\\

\textbf{Example 5 (convergence test case-2D)} 
Here, for the open table problem, we analyze the E.O.C. of  SO-$\Theta$  scheme with respect to the steady state solution and compare it with the FO and SO schemes. \begin{table}[h!]
\begin{center}  {\small   \addtolength{\tabcolsep}{-2.8pt} 
\begin{tabular}{|l|l|l|l|l|} \hline
h & $||u_{\Delta x} - u_r||_\infty$ & E.O.C. & $||v_{\Delta x}-v_r||_1$ & E.O.C.\\ 
\hline
\multicolumn{5}{|c|}{FO scheme}  \\\hline 
0.05    &        0.0662  &     -                  &       0.0031   &    -              \\ \hline        
0.025   &        0.0334  &     0.9841 &       0.0009  &    1.7513   \\ \hline   
0.0125   &       0.0167 &    1.0037 &       0.0002  &    1.8799  \\ \hline    
0.00625  &       0.0084  &    0.9876 &      6.4819e-05  &    1.9368 \\ \hline
\multicolumn{5}{|c|}{SO scheme}  \\\hline 
0.05  &          0.0599 &       -                &         0.0038&    -            \\ \hline            
0.025 &          0.0299 &     0.9992 &       0.0017 &     1.1327      \\ \hline 
0.0125 &         0.0150 &     0.9994&       0.0008 &    1.0709 \\ \hline     
0.00625 &        0.0075 &    0.9986 &       0.0004  &    1.0385  \\ \hline
 \multicolumn{5}{|c|}{SO-$\Theta $ scheme}  \\\hline 
0.05    &        0.0600&       -                  &       0.0018  &     -                        \\ \hline
0.025    &       0.0300 &    0.9995 &      0.0005 &    1.9026    \\ \hline   
0.0125    &      0.0150 &     0.9991 &       0.0001&    1.9178 \\ \hline      
0.00625   &      0.0075 &     0.9984 &       3.3913e-05  &     1.9059\\ \hline
\end{tabular}
}
\end{center} 
\caption{Example~5 (2D): numerical errors produced by FO, SO and SO-$\Theta$ schemes, computed up to time $T = 26$ with the time step  $\Delta t = 0.35h.$}
\label{table:eoc2d}
\end{table}The numerical solutions are computed near the steady state, by running the simulation till the time  $T = 26,$ as studied in \cite{adimurthi2016a}. Considering the value  $\lambda= 0.7 $ used in \cite{adimurthi2016a} for the FO scheme, we adopt a natural choice for the SO and SO-$\Theta$ schemes with $\lambda = 0.35.$ Here, for a fair comparison, we use the same $ \lambda= 0.35$ for all the three schemes FO, SO and SO-$\Theta$. The E.O.C. is computed as:
\begin{align*}
    \varphi (h):= \log\Bigl (e (h)/ e (h/2) \Bigr)/\log 2,
\end{align*}
where $e (h)$ denotes $||u_{\Delta x}-u_s||_\infty$ and $||v_{\Delta x}-v_s||_1$ in respective cases. 
The outcomes presented in Table \ref{table:eoc2d} reveal that the SO-$\Theta$ scheme exhibits marginally superior convergence when compared with the FO scheme despite the fact that the SO-$\Theta$ scheme reduces to the FO scheme in the vicinity of the steady state. On the other hand, it is evident that the SO scheme displays a diminished rate of convergence in comparison to both the FO and SO-$\Theta$ schemes for $v.$ This emphasizes the robustness and significance of the proposed SO-$\Theta$ scheme.

\begin{figure}
     \centering
     \begin{subfigure}[b]{0.45\textwidth}
         \centering
         \includegraphics[width=\textwidth]{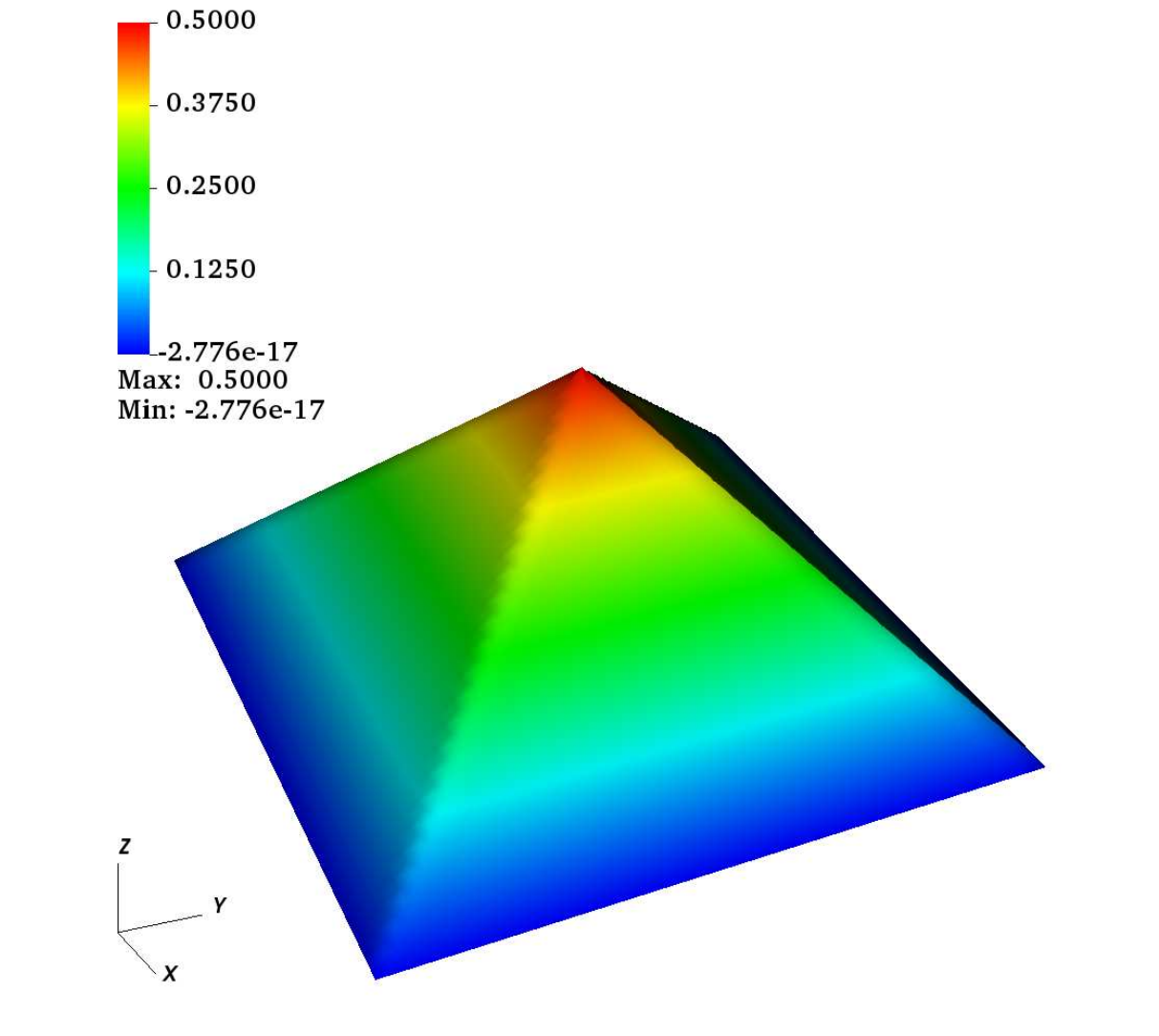}
         \caption{$u:$ exact steady state solution} 
     \end{subfigure}
     \begin{subfigure}[b]{0.45\textwidth}
         \centering
         \includegraphics[width=\textwidth]{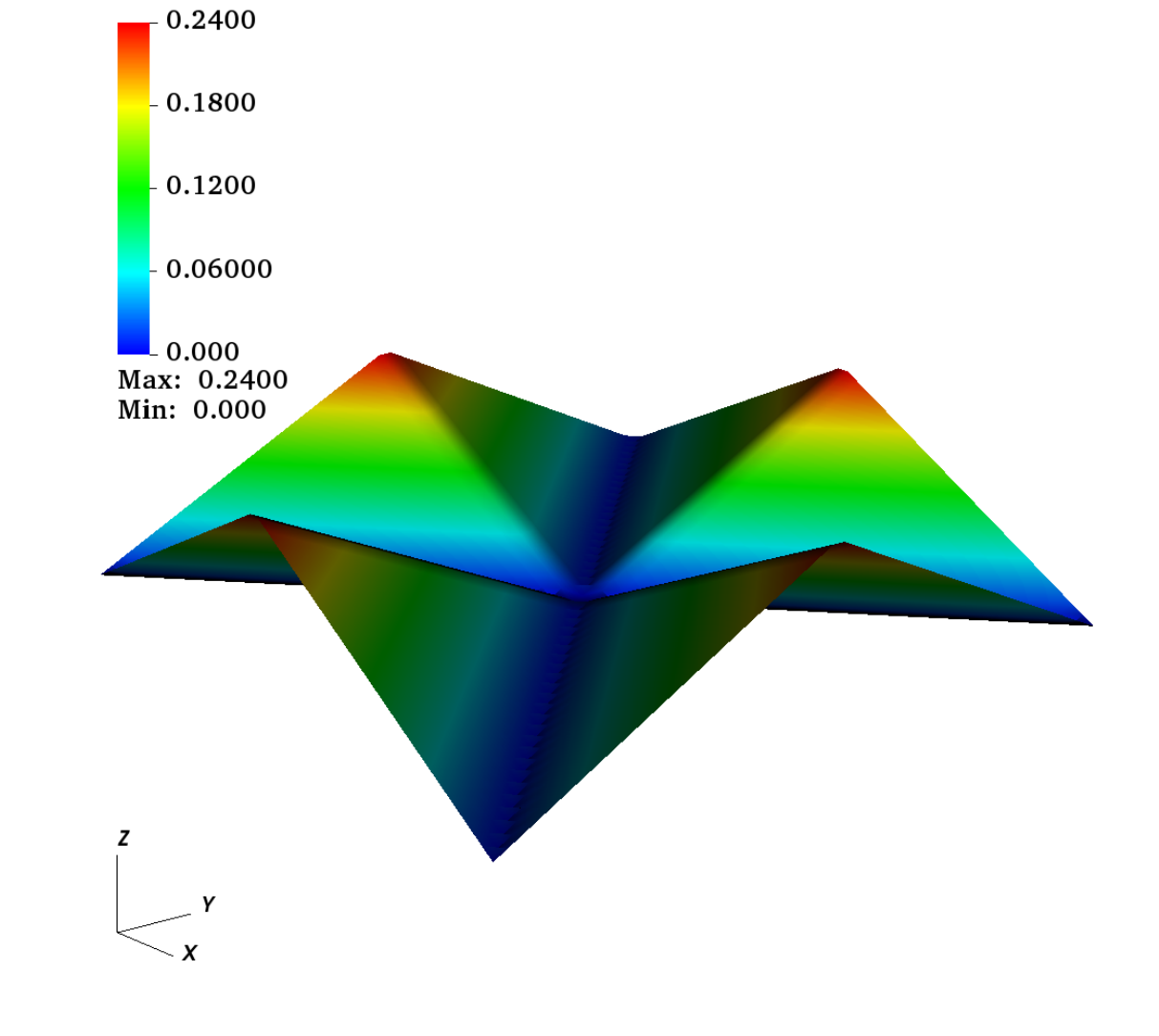}
         \caption{$v:$ exact steady state solution}
       
     \end{subfigure}\\
     \begin{subfigure}[b]{0.45\textwidth}
         \centering
         \includegraphics[width=\textwidth]{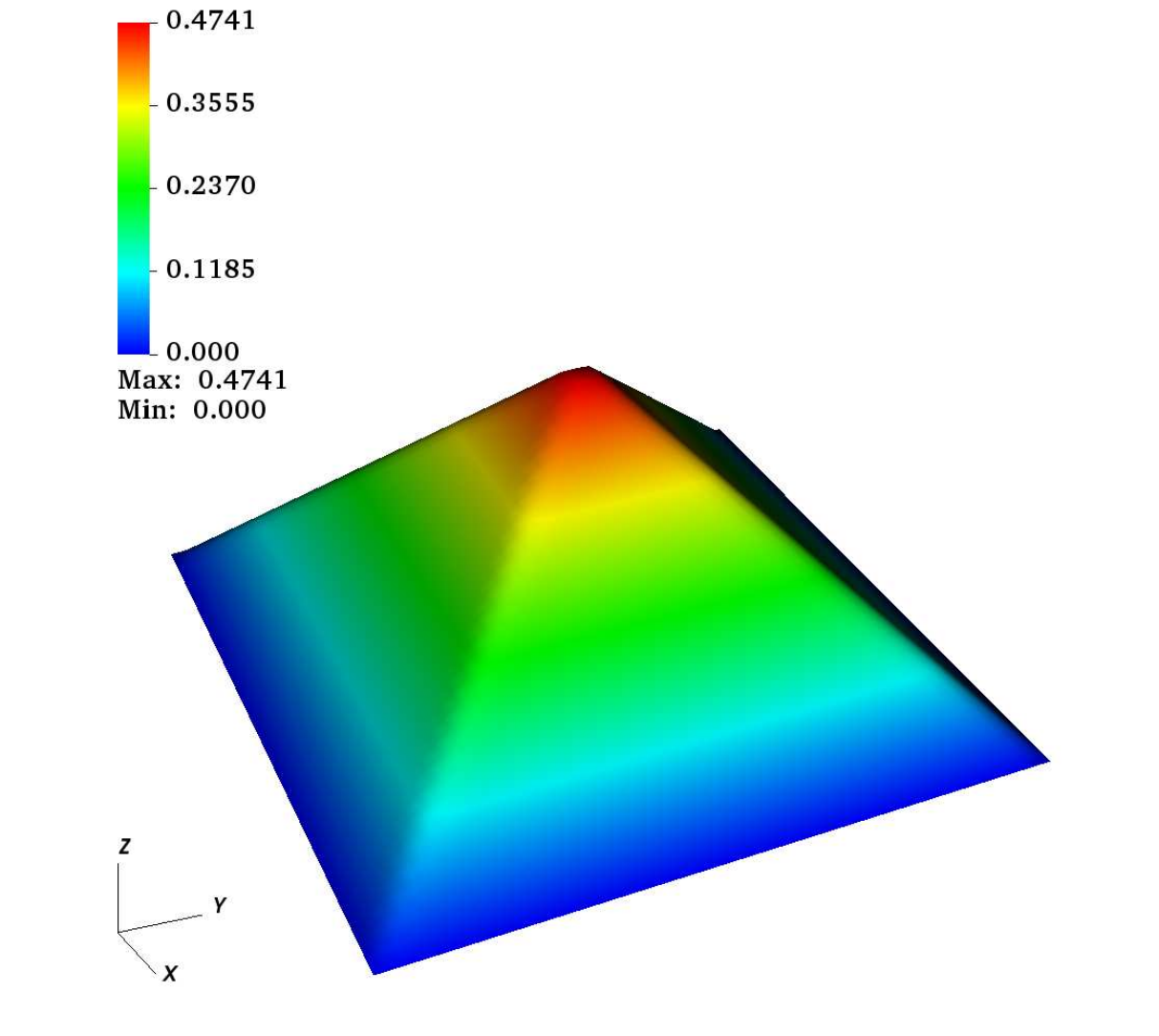}
         \caption{$u_{h}:$ using FO scheme}
     \end{subfigure}
     \begin{subfigure}[b]{0.45\textwidth}
         \centering
         \includegraphics[width=\textwidth]{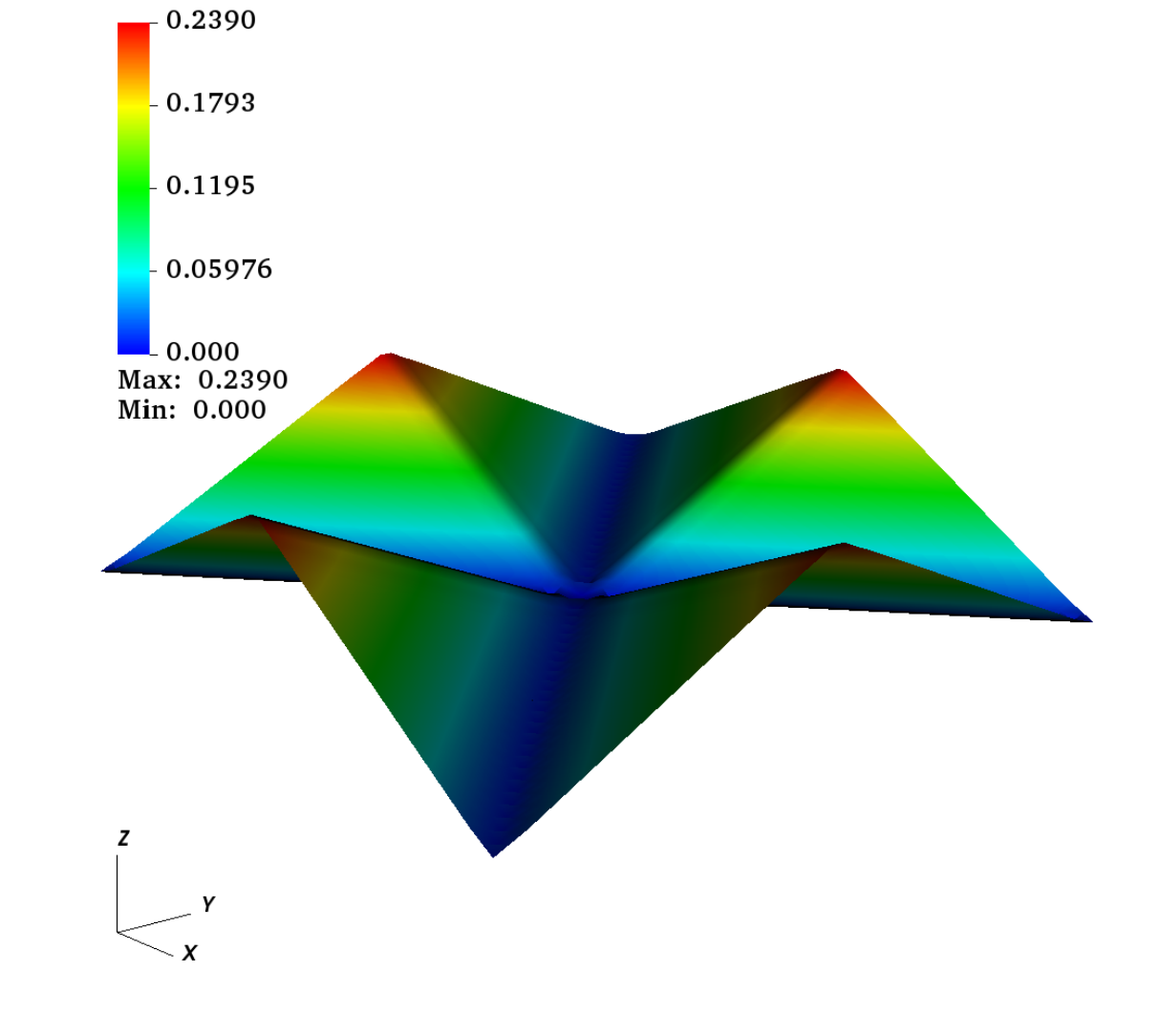}
         \caption{$v_{h}:$ using FO scheme}
     \end{subfigure}
       \begin{subfigure}[b]{0.45\textwidth}
         \centering
         \includegraphics[width=\textwidth]{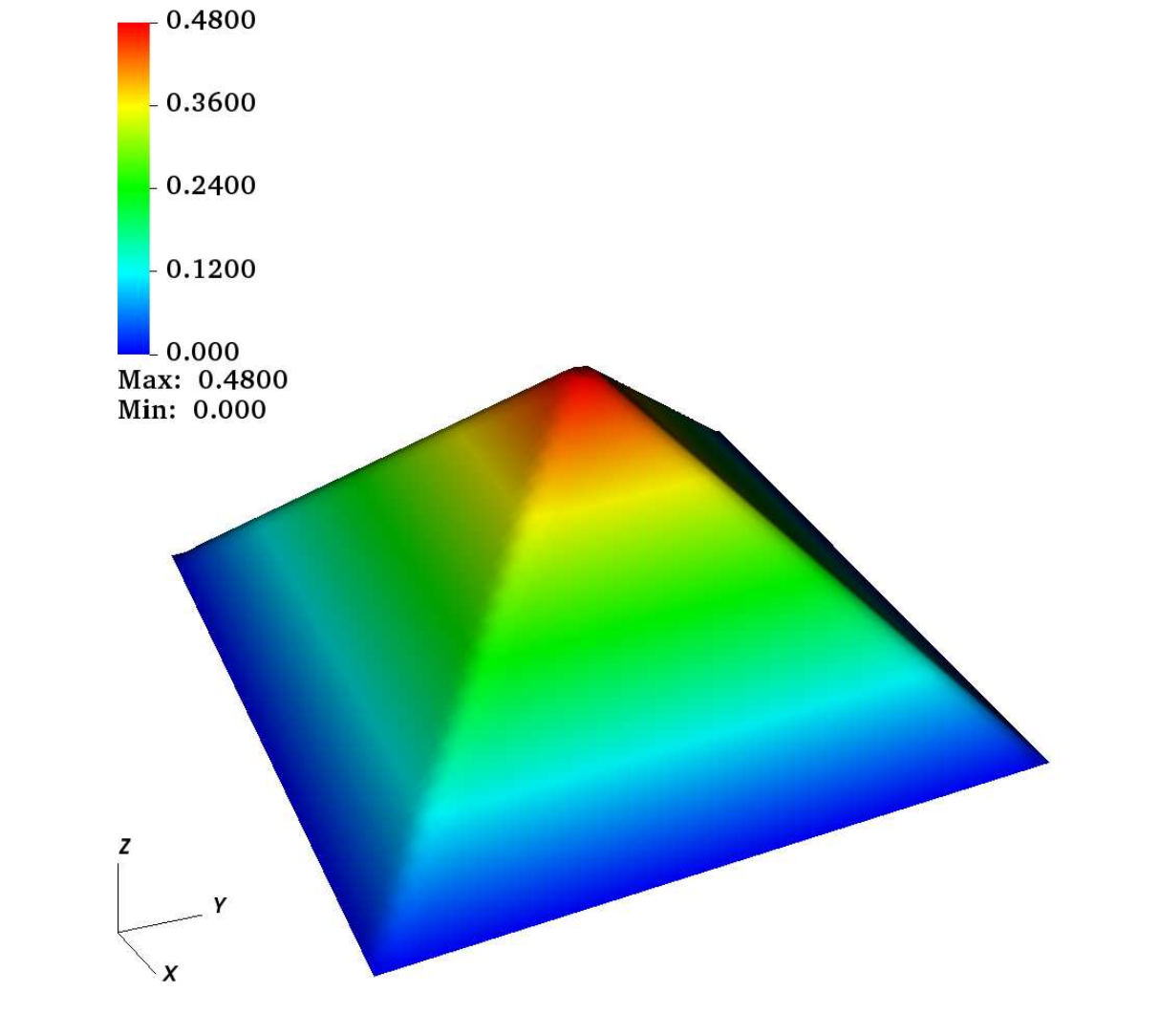}
         \caption{$u_{h}:$ using SO-$\Theta$ scheme}
     \end{subfigure}
     \begin{subfigure}[b]{0.45\textwidth}
         \centering
         \includegraphics[width=\textwidth]{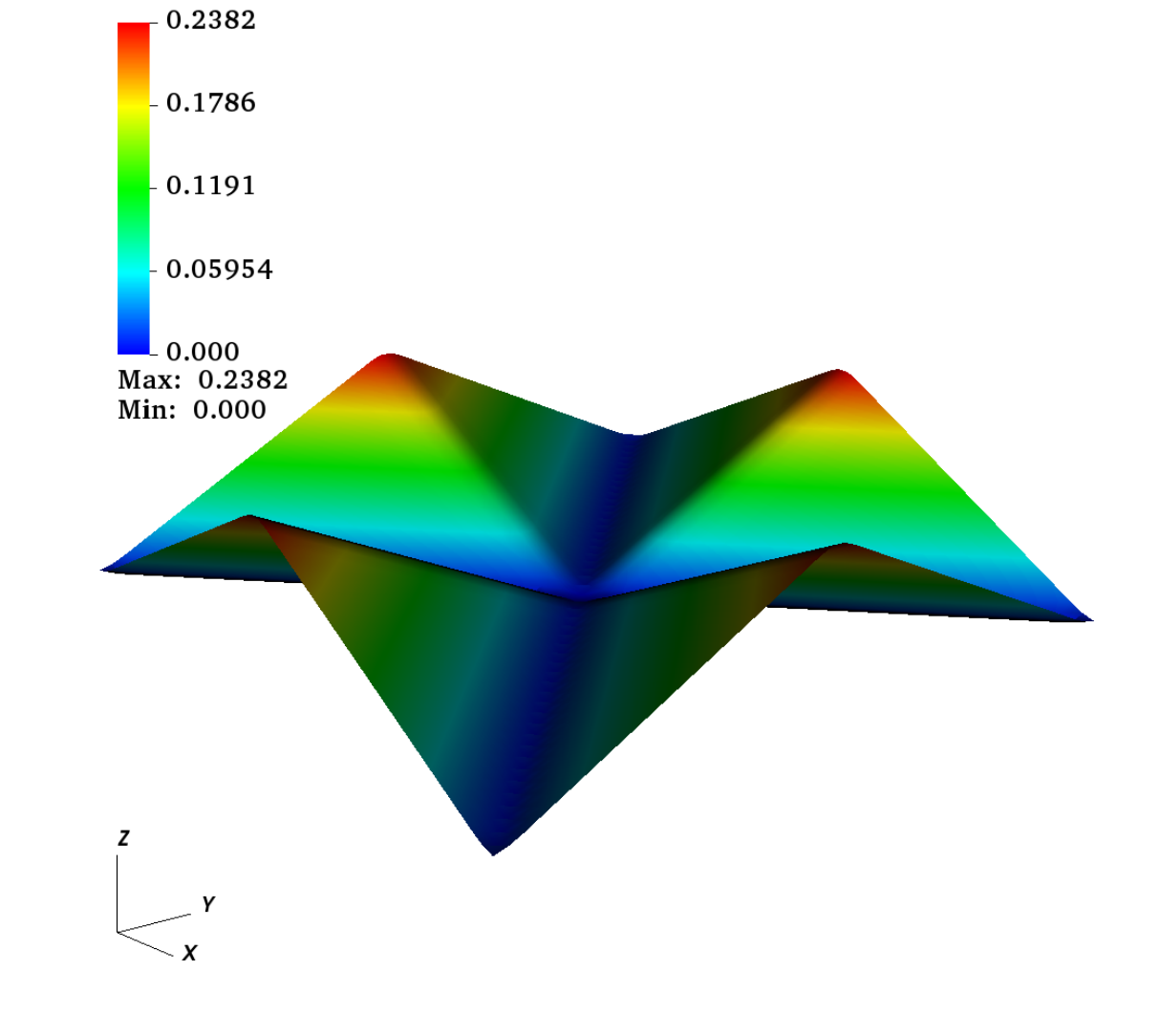}
         \caption{$v_{h}:$ using SO-$\Theta$ scheme}
     \end{subfigure}
\caption{Example~5 (2D): numerical solutions  near the steady state computed  with $h =  1/50 $ and $ \Delta t = 0.35 h $ at final time $T = 196.$}
\label{fig:2dopensteady}
\end{figure}

Next, to visualize the solution near the  steady state, we run the simulation till the  time $T=196$, with the same   $\lambda=0.35$  and $h = 1/50.$  We plot the approximate solutions obtained using the FO and SO-$\Theta$ schemes with the exact steady solutions in Fig. \ref{fig:2dopensteady}, where exact steady state solutions are plotted with linear interpolation on a finer mesh of size $\Delta x=0.5.$ 
Since the approximate solutions are very close to the steady state solutions, visual distinctions are difficult to discern from the given plots.

\textbf{Example 6 (error versus number of iterations-2D)}  We examine the error versus number of iterations plot obtained using the FO, SO, and SO-$\Theta$ schemes for the open table problem. To conduct this comparison,as for  Example 5, we take $\lambda=0.35$ with a mesh size of $h=1/50$. Errors $||u_{\Delta x}-u_s||_\infty$ and $||v_{\Delta x}-v_s||_1,$ are computed at each iteration and plotted against the number of iterations. The results depicted in Fig. \ref{fig:2dexp1} clearly indicate that the SO-$\Theta$ scheme outperforms the FO and SO schemes. In other words, the SO-$\Theta$ scheme converges to the steady state more rapidly in comparison to the FO and SO schemes, demonstrating its superior efficiency.


\begin{figure}
     \centering
     \begin{subfigure}[b]{0.49\textwidth}
         \centering
         \includegraphics[width=\textwidth]{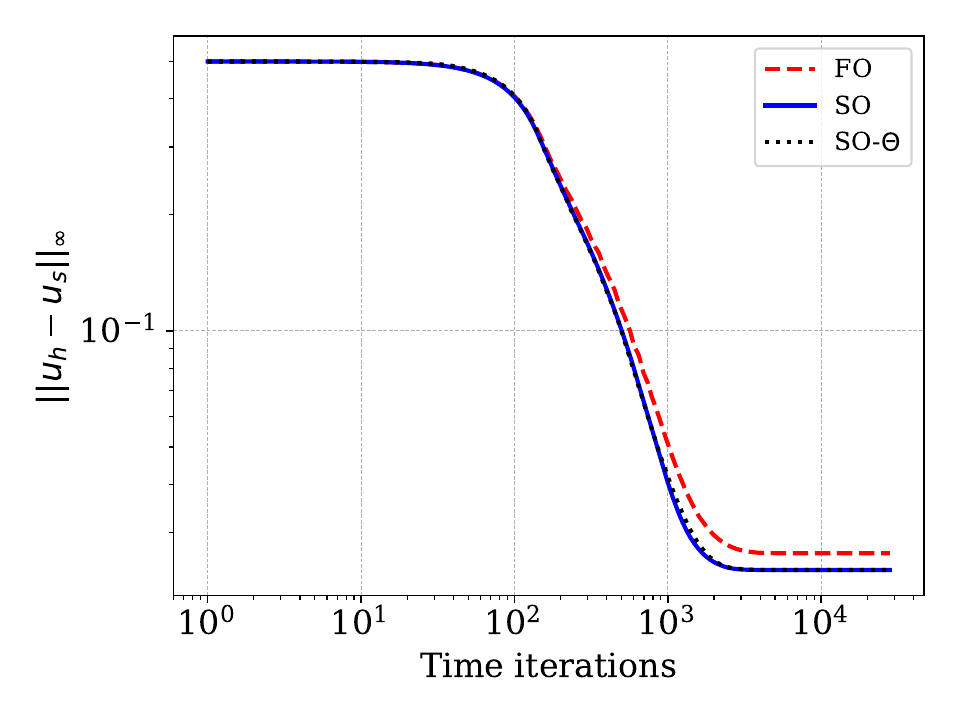}
         \caption{$u_h$}
     \end{subfigure}
     \begin{subfigure}[b]{0.49\textwidth}
         \centering
         \includegraphics[width=\textwidth]{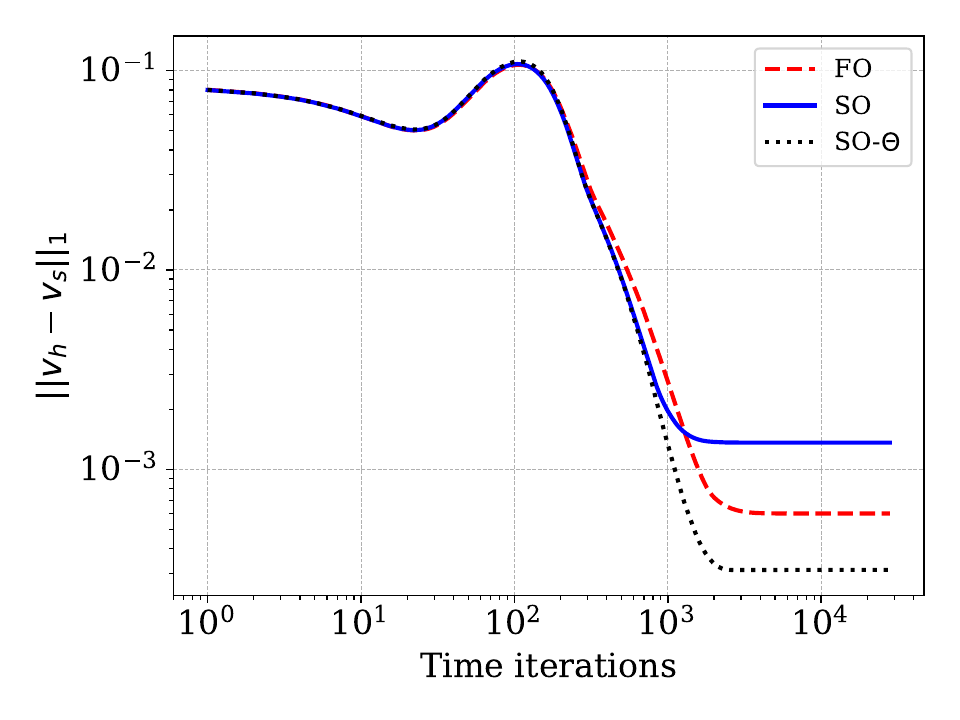}
         \caption{$v_h$}
     \end{subfigure}
\caption{Example~6 (2D): numerical errors versus number of time iterations plots for the open table problem with $h = 1/50$  and $\Delta t = 0.35 h.$ Numerical solutions are evolved till they reach near the steady state.}
\label{fig:2dexp1}
\end{figure}

\begin{figure}
     \centering
     \begin{subfigure}[b]{0.45\textwidth}
         \centering
         \includegraphics[width=\textwidth]{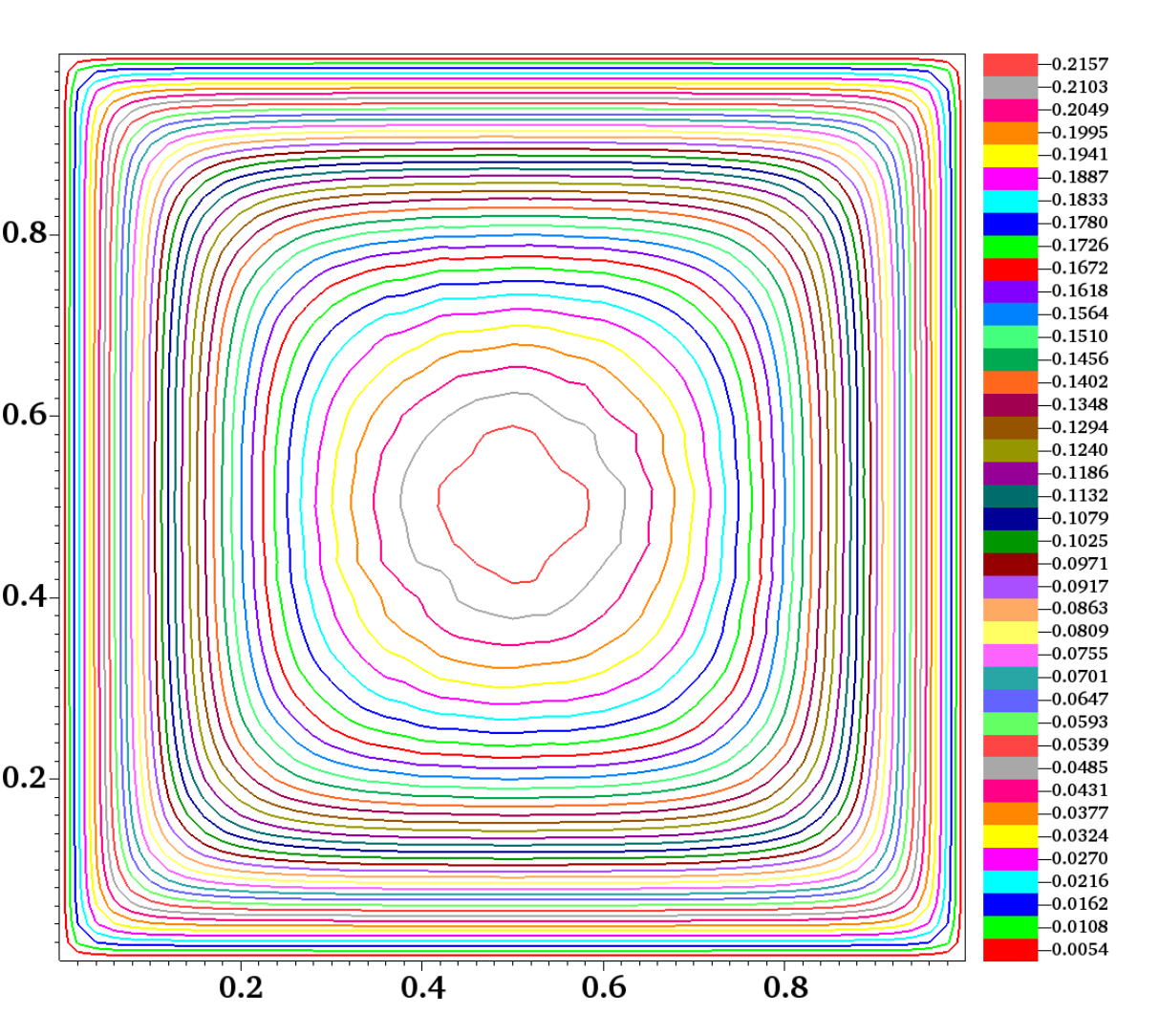}
         \caption{$u_h:$ using FO scheme, $\Delta t = 0.35 h$} 
     \end{subfigure}
     \begin{subfigure}[b]{0.45\textwidth}
         \centering
         \includegraphics[width=\textwidth]{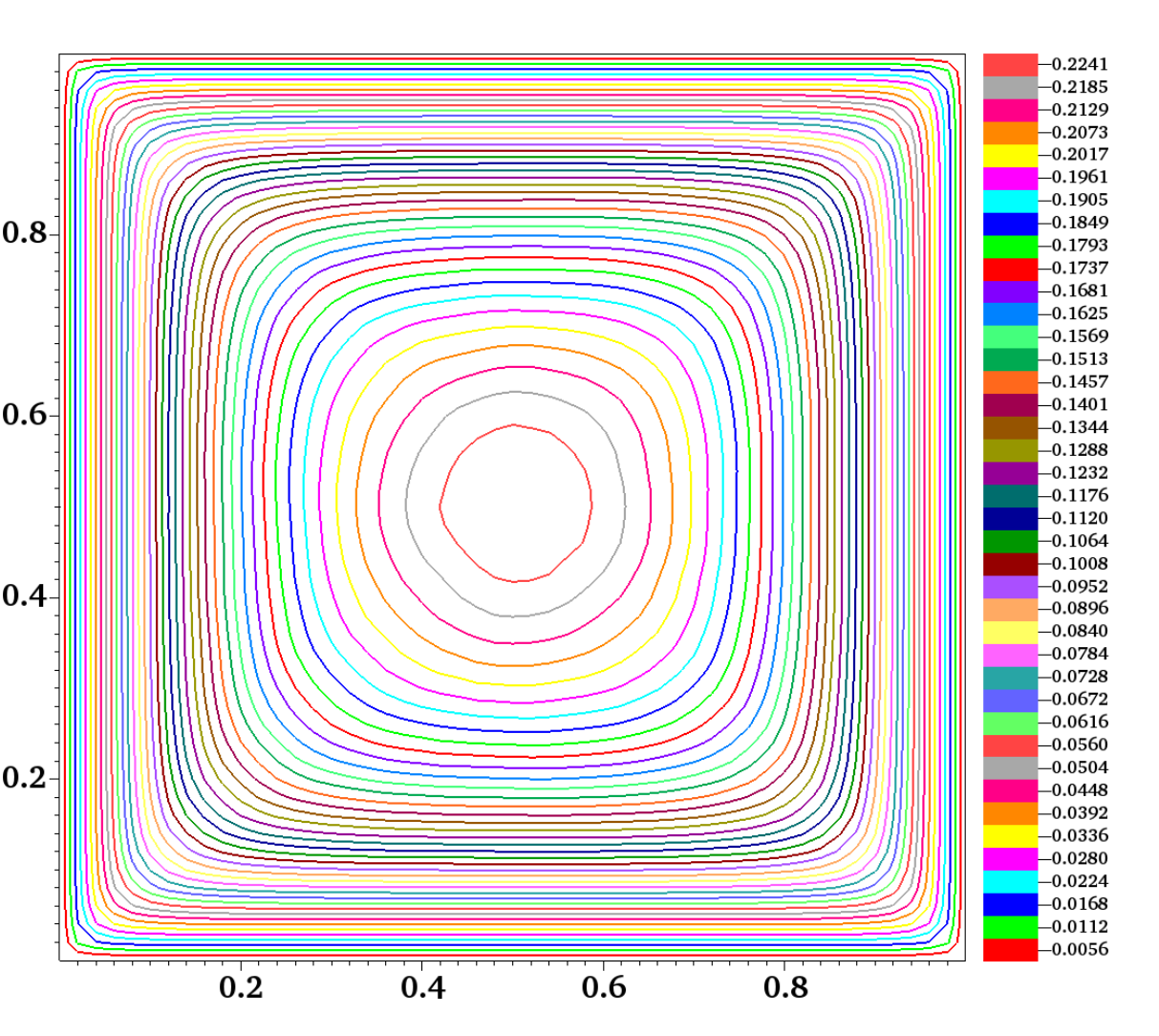}
         \caption{$u_h:$ using  SO-$\Theta$ scheme, $\Delta t = 0.35 h$}
     \end{subfigure}\\
          \begin{subfigure}[b]{0.45\textwidth}
         \centering
         \includegraphics[width=\textwidth]{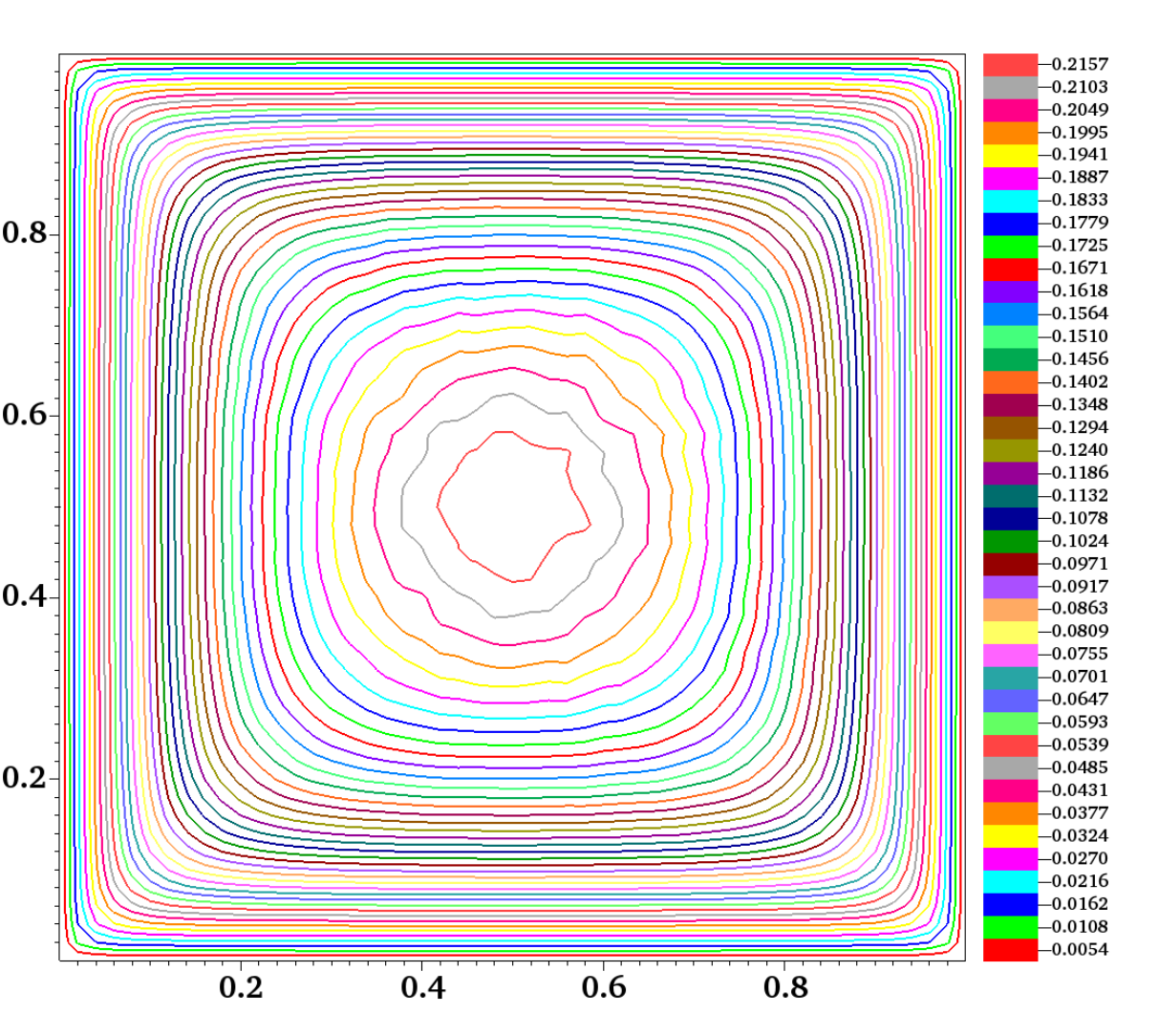}
         \caption{$u_h:$ using FO scheme, $\Delta t = 0.45 h$} 
     \end{subfigure}
     \begin{subfigure}[b]{0.45\textwidth}
         \centering
         \includegraphics[width=\textwidth]{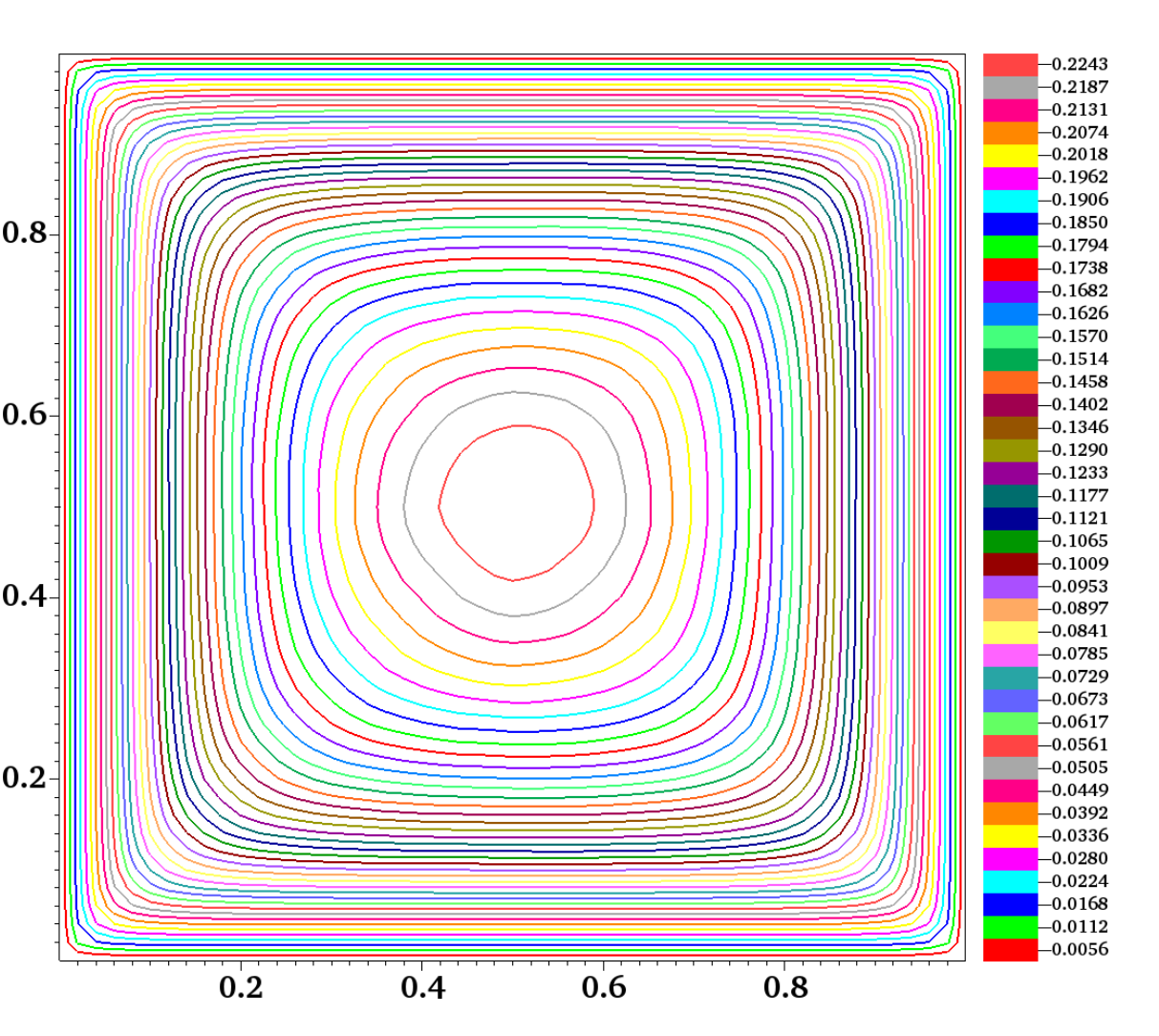}
         \caption{$u_h:$ using  SO-$\Theta$ scheme, $\Delta t = 0.45 h$}
     \end{subfigure}\\
           \begin{subfigure}[b]{0.45\textwidth}
         \centering
         \includegraphics[width=\textwidth]{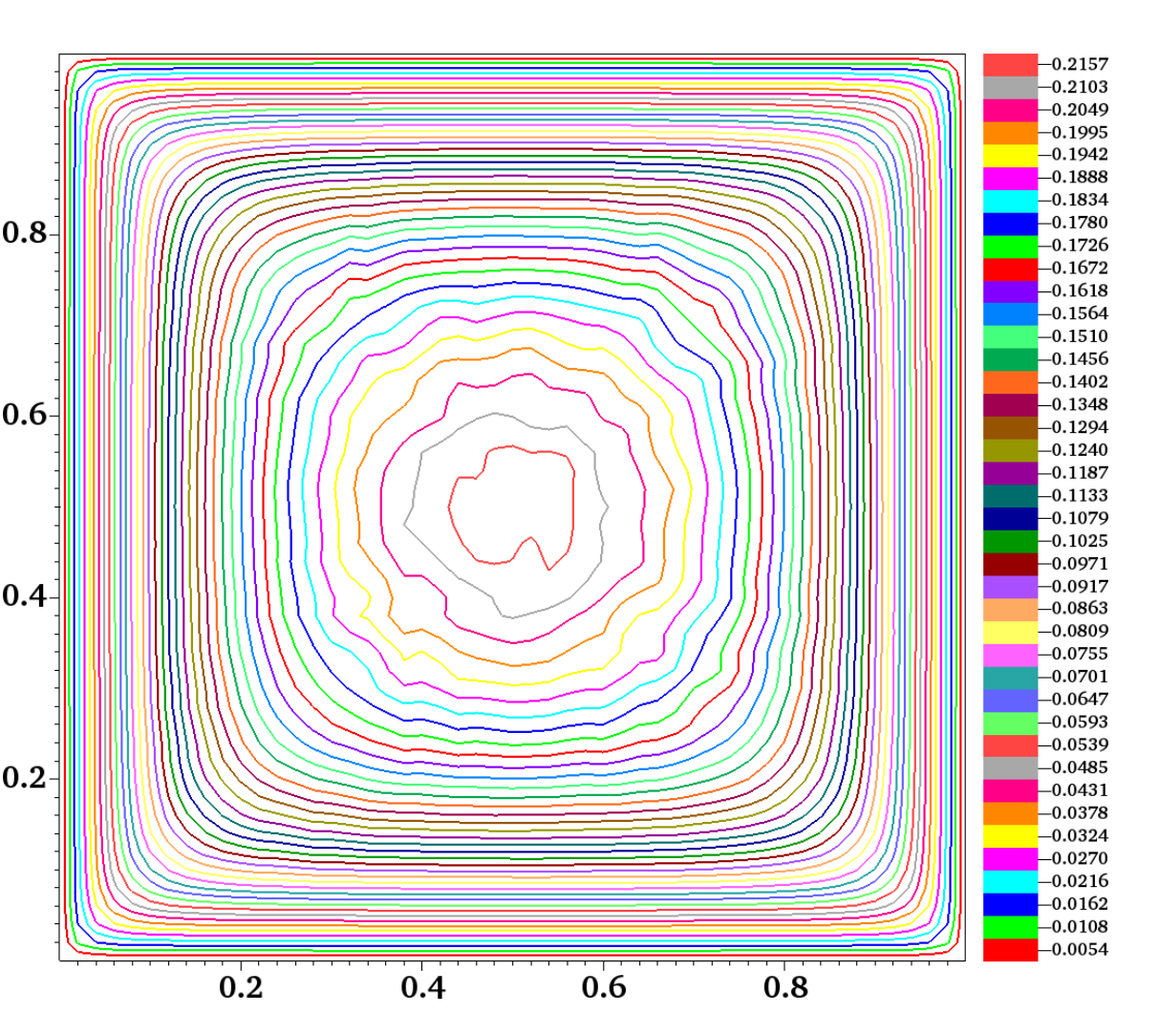}
         \caption{$u_h:$ using FO scheme, $\Delta t = 0.7 h$} 
     \end{subfigure}
     \begin{subfigure}[b]{0.45\textwidth}
         \centering
         \includegraphics[width=\textwidth]{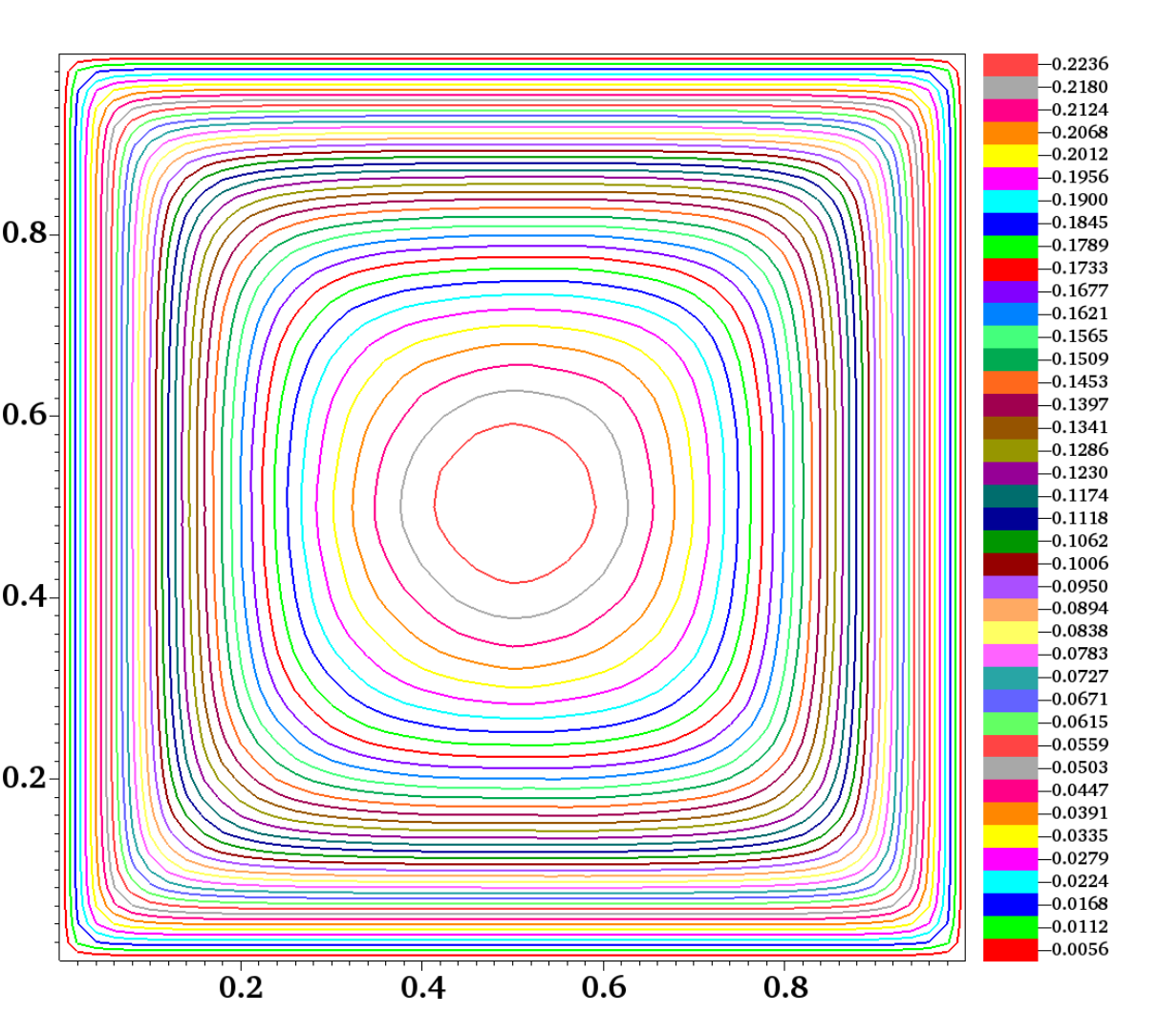}
         \caption{$u_h:$ using  SO-$\Theta$ scheme, $\Delta t = 0.7 h$}
     \end{subfigure}
\caption{Example~7 (2D): numerical solutions computed at  time $T = 1.2$ with a mesh size $h  = 1/50 $ and using values of $\lambda:$ 0.35, 0.45 and 0.7. The contour plots use 40 contour curves.}
\label{fig:2dopencflu}
\end{figure}

\begin{figure}
     \centering
     \begin{subfigure}[b]{0.45\textwidth}
         \centering
         \includegraphics[width=\textwidth]{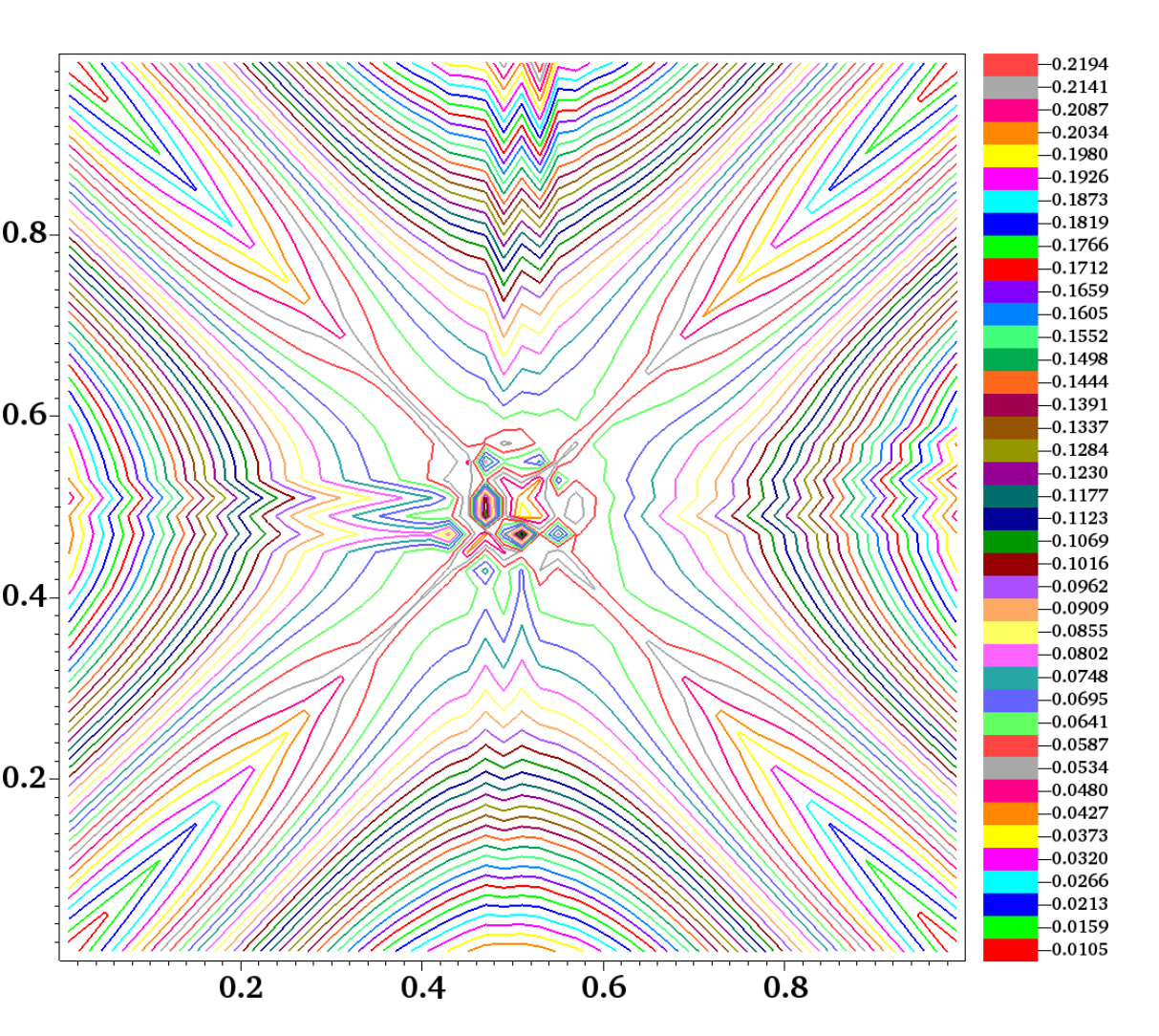}
         \caption{$v_h:$ using FO scheme, $\Delta t = 0.35 h$} 
     \end{subfigure}
     \begin{subfigure}[b]{0.45\textwidth}
         \centering
         \includegraphics[width=\textwidth]{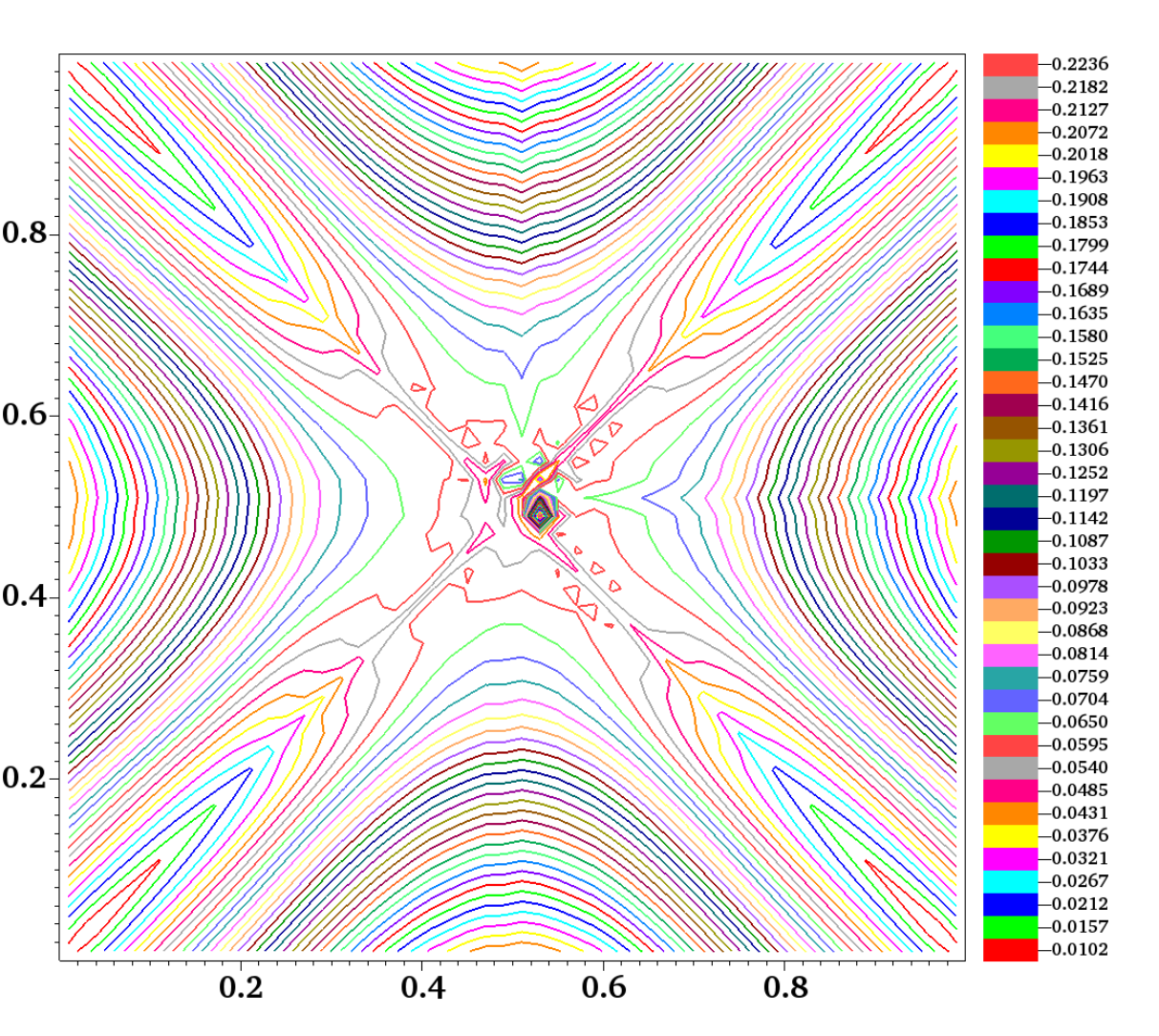}
         \caption{$v_h:$ using  SO-$\Theta$ scheme, $\Delta t = 0.35 h$}
     \end{subfigure}\\
          \begin{subfigure}[b]{0.45\textwidth}
         \centering
         \includegraphics[width=\textwidth]{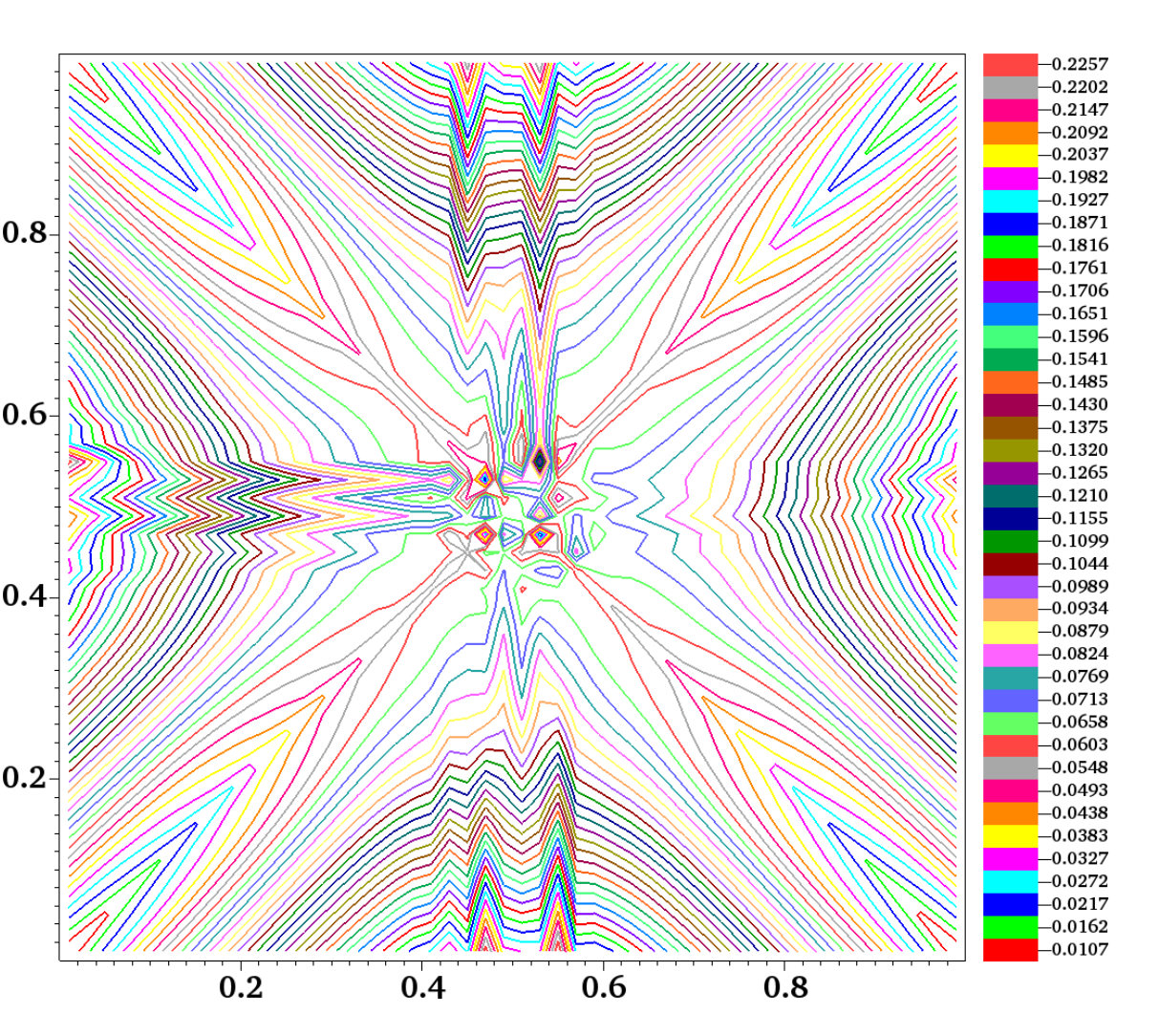}
         \caption{$v_h:$ using FO scheme, $\Delta t = 0.45 h$} 
     \end{subfigure}
     \begin{subfigure}[b]{0.45\textwidth}
         \centering
         \includegraphics[width=\textwidth]{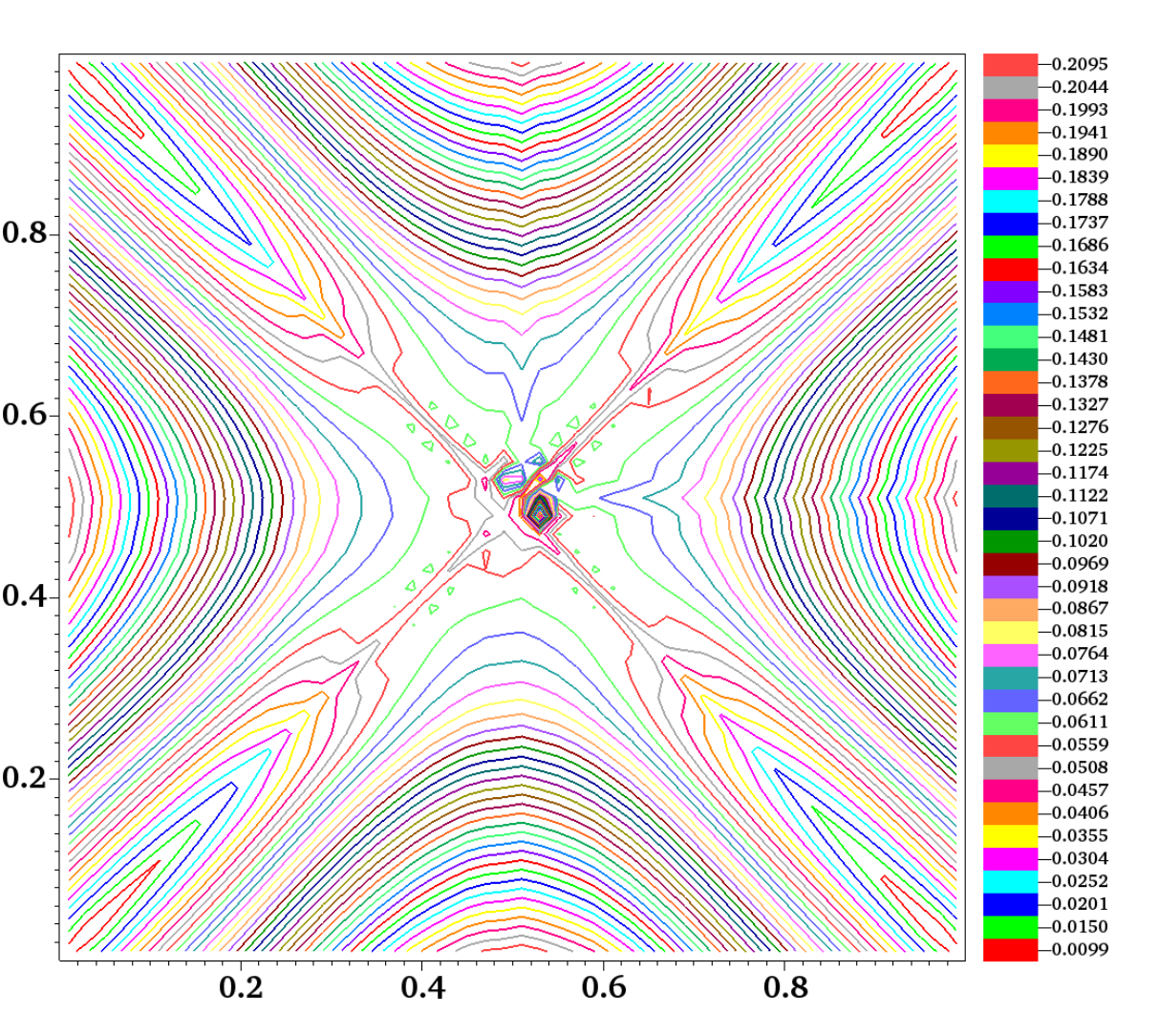}
         \caption{$v_h:$ using  SO-$\Theta$ scheme, $\Delta t = 0.45 h$}
     \end{subfigure}\\
           \begin{subfigure}[b]{0.45\textwidth}
         \centering
         \includegraphics[width=\textwidth]{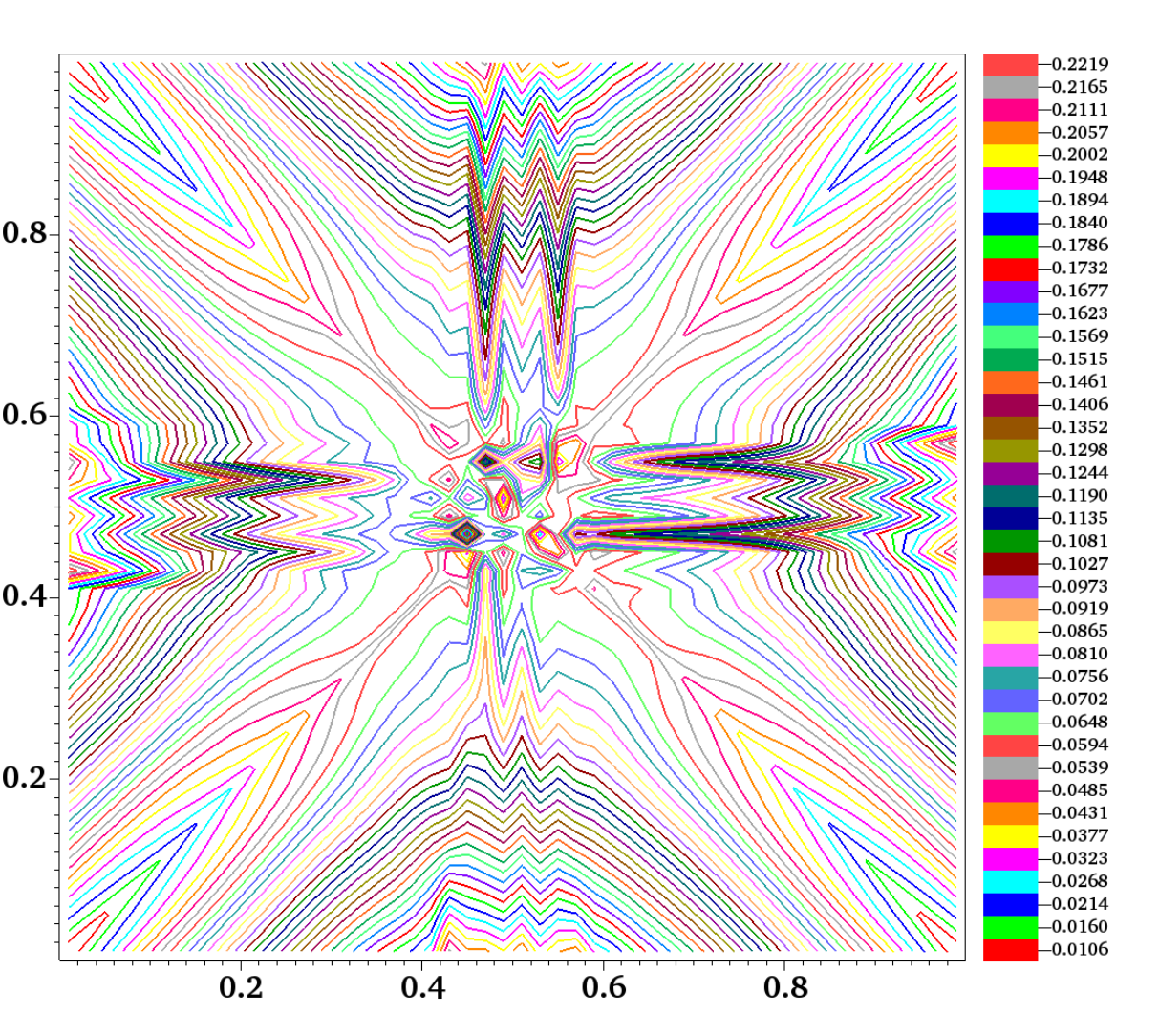}
         \caption{$v_h:$ using FO scheme, $\Delta t = 0.7 h$} 
     \end{subfigure}
     \begin{subfigure}[b]{0.45\textwidth}
         \centering
         \includegraphics[width=\textwidth]{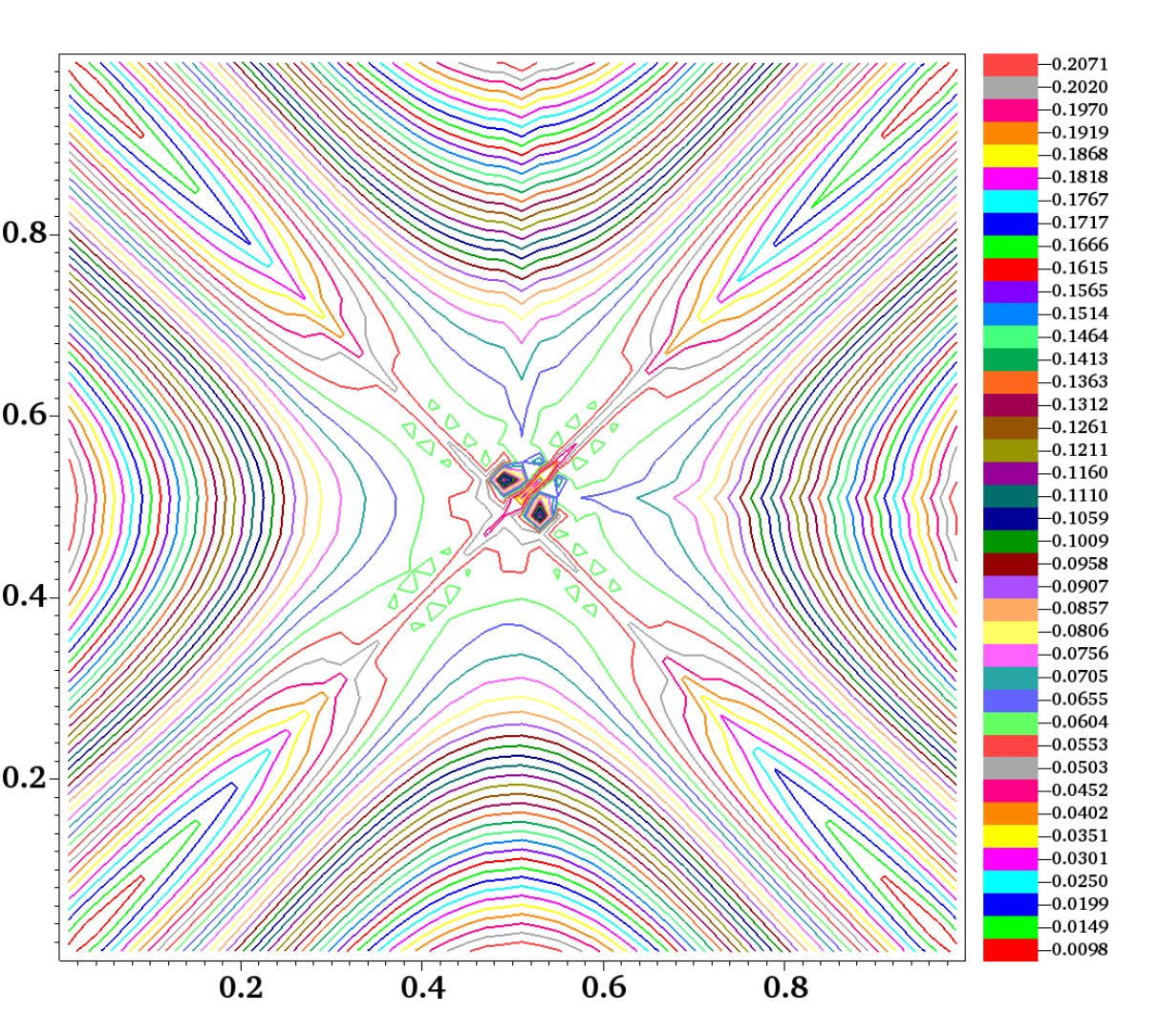}
         \caption{$v_h:$ using  SO-$\Theta$ scheme, $\Delta t = 0.7 h$}
     \end{subfigure}
\caption{Example~7 (2D):  numerical solutions computed at  time $T = 3.0$ with a mesh size  $h  = 1/50 $ and using values of $\lambda$: 0.35, 0.45 and 0.7. The contour plots use 40 contour curves.}
\label{fig:2dopencfl}
\end{figure}

\textbf{Example 7 (solutions at finite time-2D)} 
Here again   we consider the open table problem  and  compare the solution profiles at a finite time, specifically at $T=3.0.$
The main aim here is to show that the SO-$\Theta$ scheme performs well even for the larger $\lambda$.
We compute the approximate solution $v_h$ using both the FO and SO-$\Theta$ schemes and plot  with contour curves. The computations are performed for different values of $\lambda$: 0.35, 0.45, and 0.7, all with a mesh size of $h=1/50$  and the  results are given in Fig. \ref{fig:2dopencfl}. 
For $\lambda = 0.35,$ we see a clear distinction between the two schemes. In the case of $v:$ the FO scheme exhibits oscillations, particularly noticeable when observing the contour plots along the lines $\{ (x,y):x=0.5, 0\le y \le 1\}$ and $\{ (x,y):y=0.5,0\le x \le 1\}$ in Fig. \ref{fig:2dopencfl} (a) and (b). In contrast, the SO-$\Theta$ scheme generates fewer oscillations, indicating its robustness.
Furthermore, as the  value of $\lambda$ increase to 0.45 and 0.7, the oscillations intensify in the FO schemes. However, the SO-$\Theta$ scheme remains stable and less-oscillatory. This is  evident in Fig. \ref{fig:2dopencfl} (b), (d) and (f). In the case of $u:$ we observe this same behaviour  at time $T =1.2,$ which is given in Fig. \ref{fig:2dopencflu}.
This comparison highlights the robustness of the SO-$\Theta$ scheme in producing accurate solutions at finite times, emphasizing its significance in numerical simulations. 

\textbf{Example 8 (discontinuous source test case-2D)} In this example we consider the open table problem in the computational domain $[0,1]\times[0,1]$ with a discontinuous source function given by (see Example 2  of \cite{adimurthi2016a}, page no. 1110)
\begin{align*}
    f (x,y) = \begin{cases}
        0.5 &\mbox{ if } (x,y) \in D_f,\\
        0 & \mbox{ elsewhere,}
    \end{cases}
\end{align*}
where $D_f:= [0.1, 0.3]\times [0.5, 0.7] \cup [0.5,0.7]\times[0.7, 0.9].$ The decomposition of source function $f$ is as given in \cite{adimurthi2016a}. As explained in \cite{adimurthi2016a}, the discretization of the domain $[0,1]\times[0,1]$ is carefully done such that corners of the interior squares (see Fig. 21 in \cite{adimurthi2016a}) fall at the center of the computational cell. Due to this reason, in this test case we choose a mesh size of $h = 1/55.$ For more details on this experiment we refer to \cite{adimurthi2016a}. We compute the solutions at large time $T= 200,$ where it reaches the steady state with $\lambda =0.35.$ The results are printed in Fig. \ref{fig:disc2d}. It is evident that the SO-$\Theta$ scheme produces a better sharp resolution near the crest formation compared to the FO scheme. 

\begin{figure}
     \centering
     \begin{subfigure}[b]{0.45\textwidth}
         \centering
         \includegraphics[width=\textwidth]{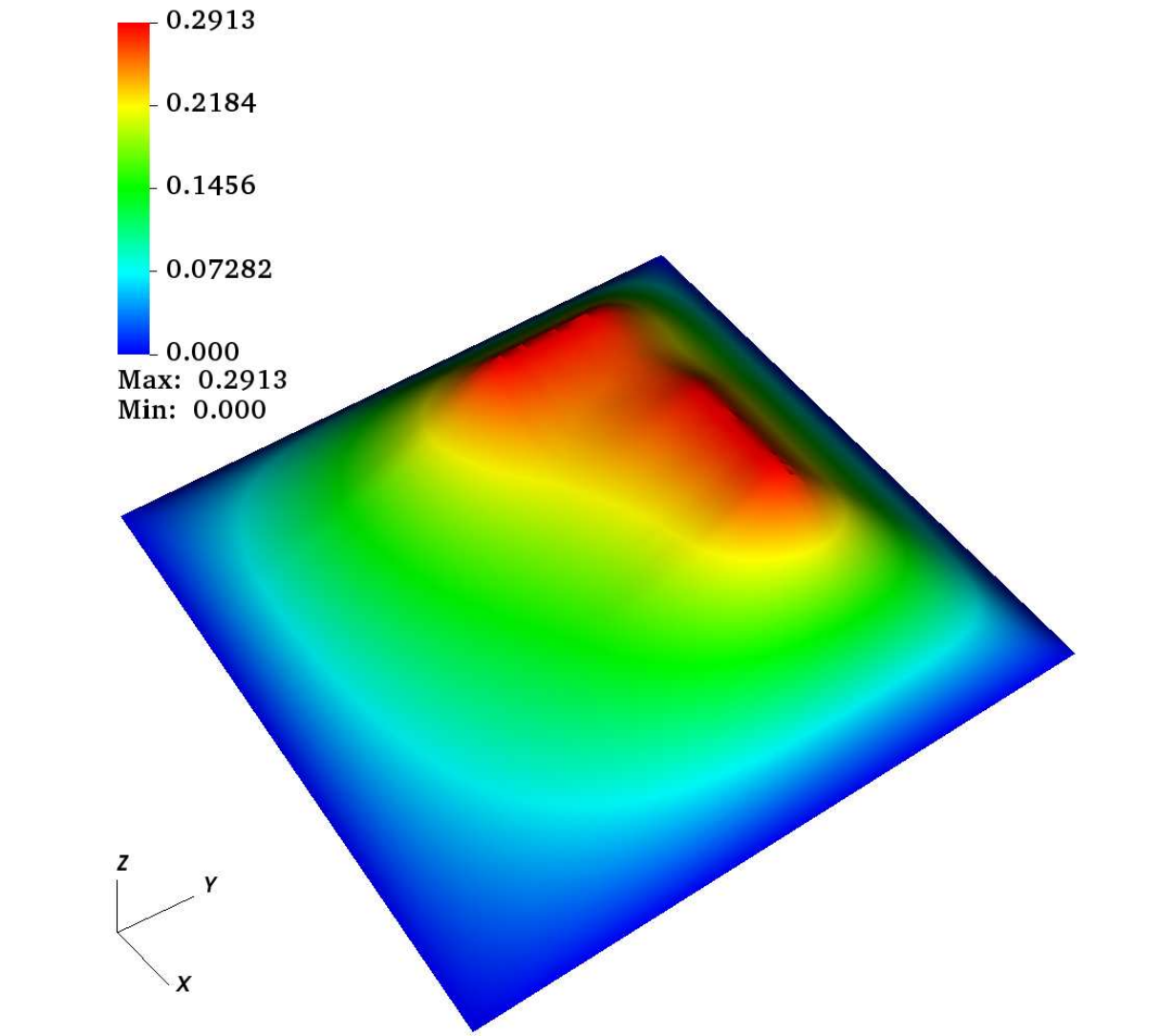}
         \caption{$u_{h}:$ using FO scheme}
     \end{subfigure}
     \begin{subfigure}[b]{0.45\textwidth}
         \centering
         \includegraphics[width=\textwidth]{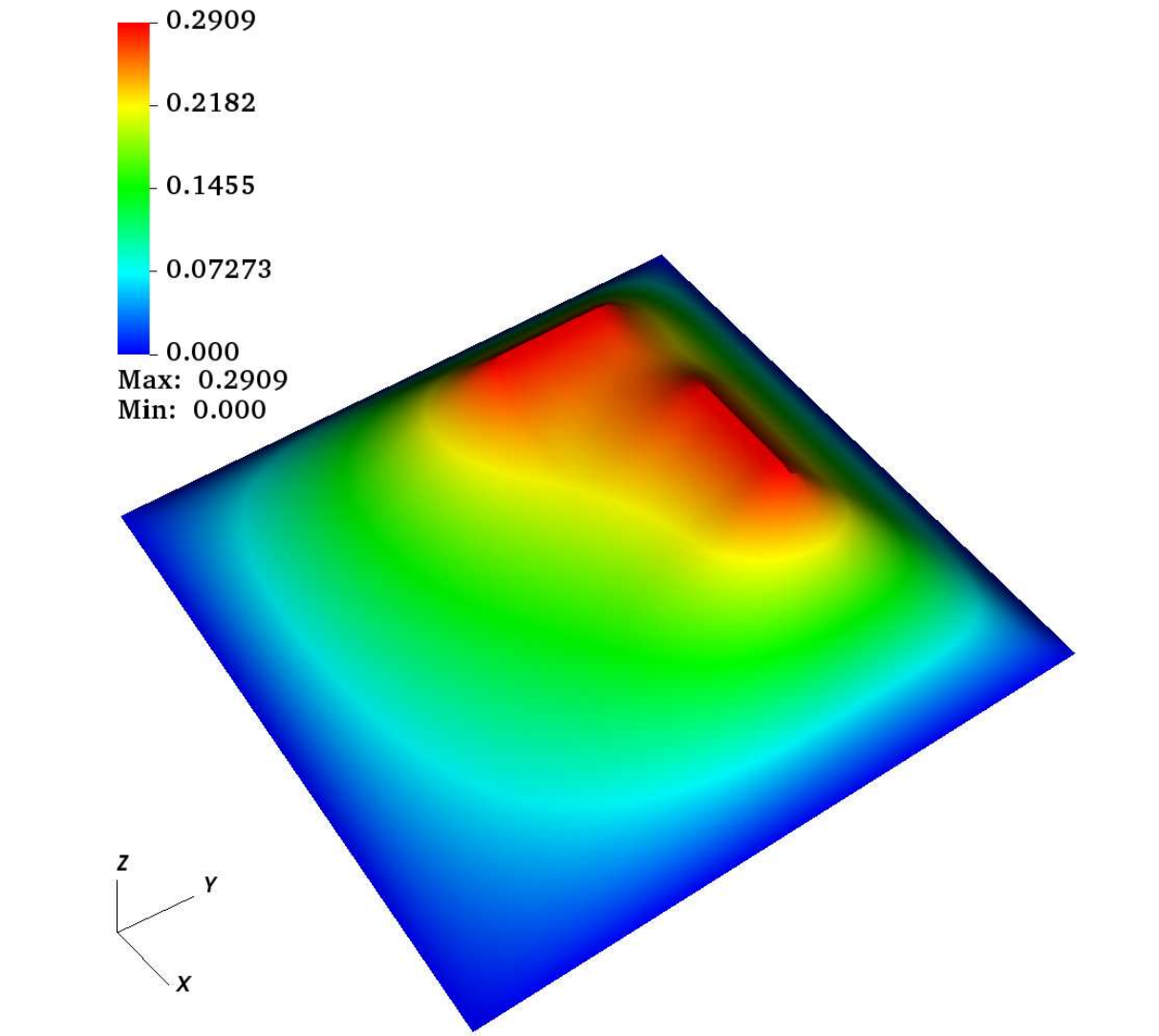}
         \caption{$u_{h}:$ using SO-$\Theta$ scheme}
         \label{fig:three sin x11}
     \end{subfigure}
\caption{Example~8 (2D): numerical solutions  corresponding to a discontinuous source, computed at time $T = 200$ with $h = 1/55$ and $ \Delta T = 0.35 h.$ }
\label{fig:disc2d}
\end{figure}

\textbf{Example~9 (partially open table problem-2D)} We consider the domain $ (0,1)\times (0,1)$ with the boundary walls $\Gamma_w$ and open  boundary $\Gamma_o$ as given in Fig. \ref{fig:transport} (b). In this scenario, we aim to solve the problem defined by equations \eqref{HKw1}-\eqref{HKw4}, with a given source function 
 \begin{align*}
    f (x,y) = 0.5 \mbox{ for all } (x,y) \in [0,1]\times [0,1].
\end{align*}
Further details about the source decomposition and problem description can be found in \cite{adimurthi2016}. 
The computations are performed with a mesh size $h=1/50$ and $\lambda=0.1.$ 
It is worth noting that we have opted here  a relatively small value of $\lambda$ in comparison to the previous examples. This is due to the fact that solution $v$ takes larger values, and $\lambda$ has to satisfy the condition: $\lambda \max_{i} v_{i}^{n}\le 1/2$  of Theorem~\ref{stablity}.
The simulations are done using both the FO and SO-$\Theta$ schemes until a large time $T=200$, allowing the numerical solutions to approach the steady state. For this example, the exact steady state solutions are available  and are given by
\begin{eqnarray*}\label{vf}
 u_s (x,y)=\left\{\begin{array}{llll}
y& \, \, \mbox{if}\, \, & x\le0.5,\\
\sqrt{ (x-\half)^2+y^2}& \, \, \mbox{if}\, \, & x>0.5,\\
\end{array}\right., \;\;
v_s (x,y)=\left\{\begin{array}{llll}
1-y& \, \, \mbox{if}\, \, & x\le0.5,\\
\displaystyle\frac{1}{d_\Gamma (x,y)}\int_{d_\Gamma (x,y)}^{l (x,y)}\rho d\rho& \, \, \mbox{if}\, \, & x>0.5,\\
\end{array}\right.\
\end{eqnarray*}
\mbox{where}\\
\[l (x,y)=\left\{\begin{array}{llll}
\sqrt{ (1-\half)^2+ (\frac{0.5y}{x-\half})^2}& \, \, \mbox{if}\, \, & \frac{y}{x-\half}\le\frac{1}{0.5},\\
\sqrt{1+ (\frac{x-\half}{y})^2}& \, \, \mbox{if}\, \, & \frac{y}{x-\half} > \frac{1}{0.5},\\
\end{array}\right.,\;\;d_\Gamma (x,y)=\sqrt{ (x-\half)^2+y^2}\]
These solutions  are plotted using  the linear interpolation on a mesh of size $h = 1/200.$ The comparison of numerical solutions obtained from the FO and SO-$\Theta$ schemes with the exact steady state solutions is presented in Fig. \ref{fig:pot2a} and \ref{fig:pot2contur}. The results are visualized using 40 contour curves in Fig. \ref{fig:pot2contur}. Notably, in the vicinity of the point $ (0.5,0)$ where the solution $v_h$ exhibits a singularity, the SO-$\Theta$ scheme provides significantly better resolutions compared to the FO scheme. This enhancement is particularly evident when comparing the numerical solutions with the exact steady state solution. These results clearly demonstrate the advantage of the SO-$\Theta$ scheme over the FO scheme in the partially open table test case.

\begin{figure}
     \centering
     \begin{subfigure}[b]{0.45\textwidth}
         \centering
         \includegraphics[width=\textwidth]{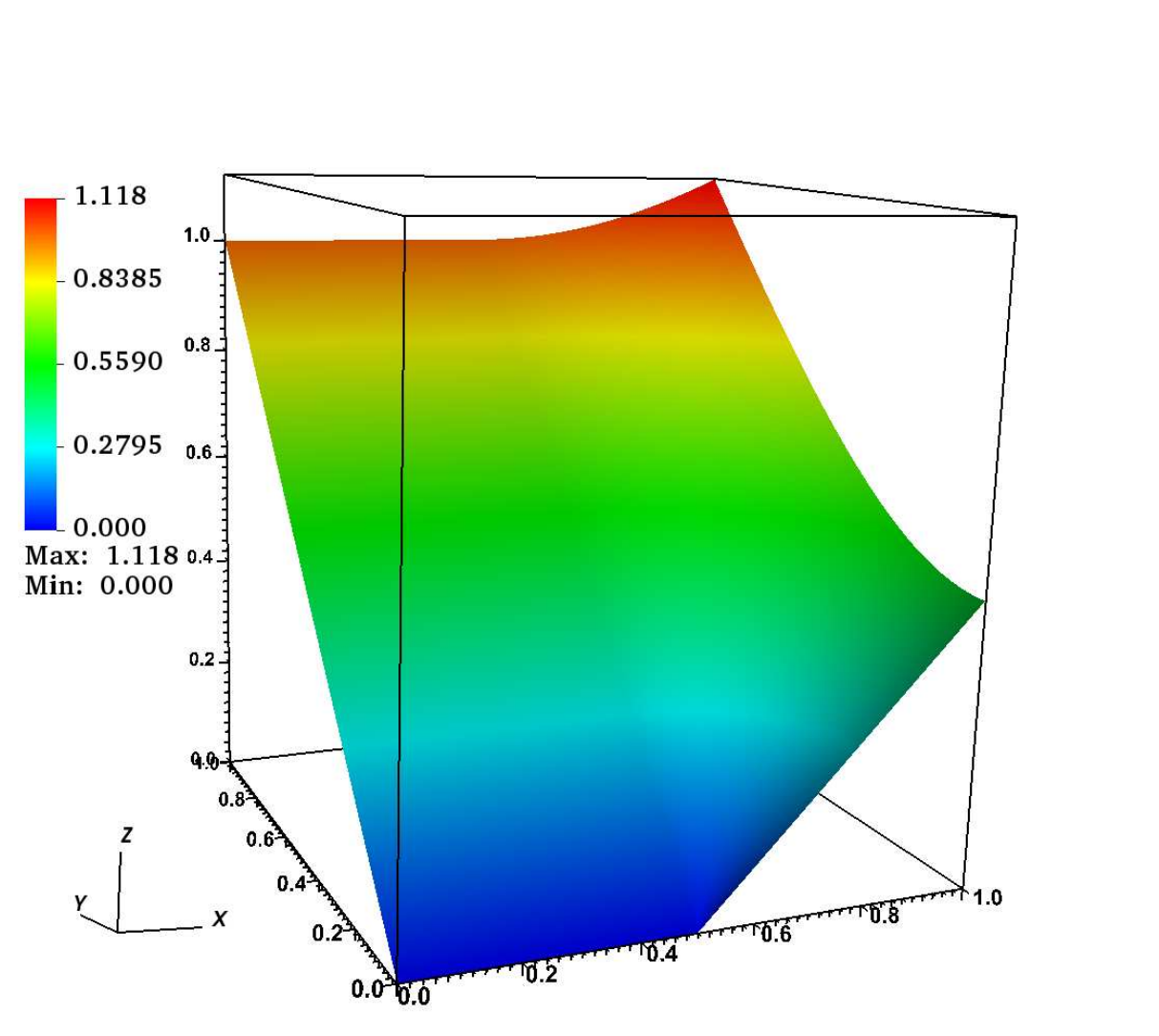}
         \caption{$\overline{u}:$ exact steady state solution}
     \end{subfigure}
     \begin{subfigure}[b]{0.45\textwidth}
         \centering
         \includegraphics[width=\textwidth]{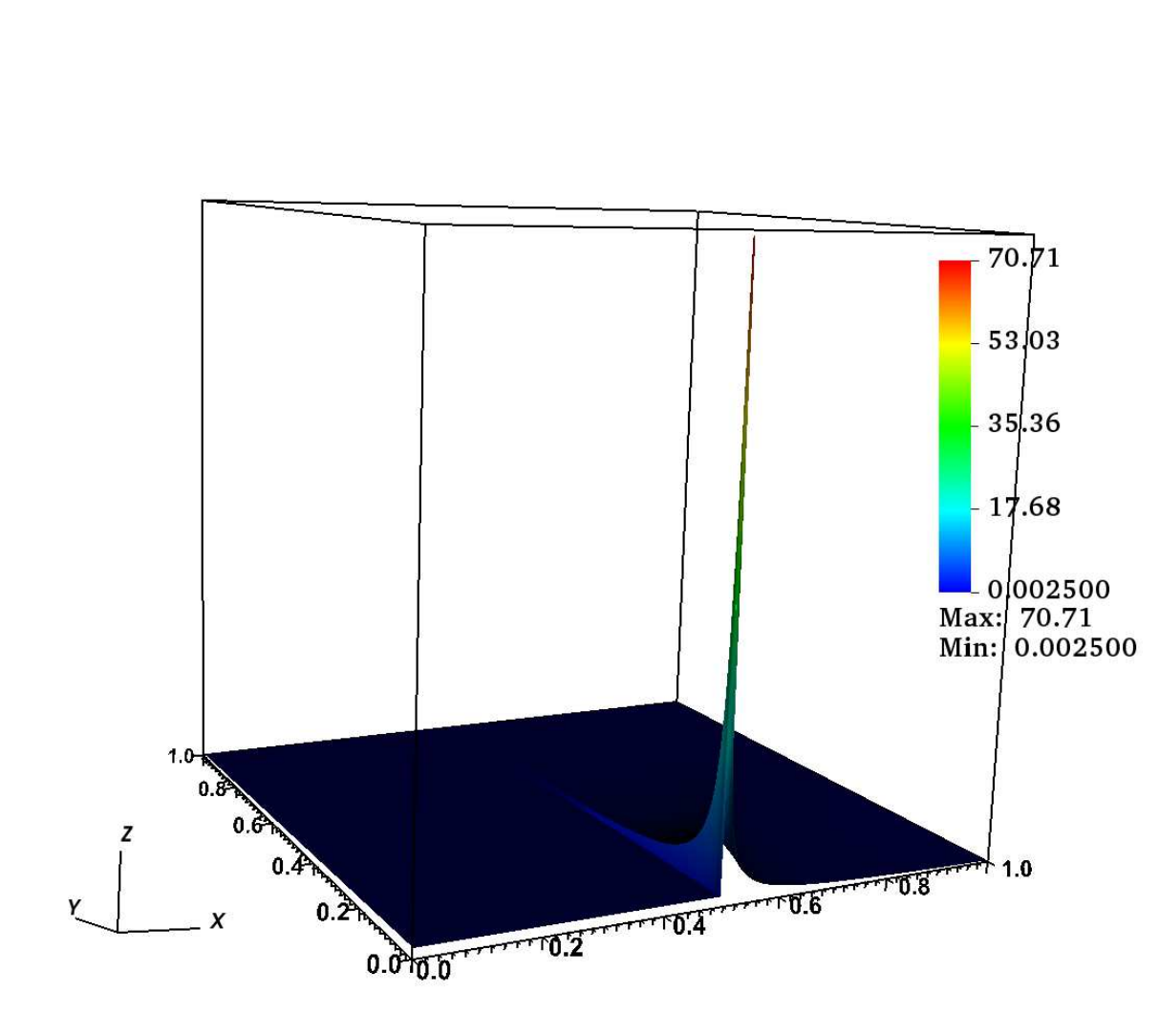}
         \caption{$\overline{v}:$ exact steady state solution}
         \label{fig:three sin x1}
     \end{subfigure}\\
     \begin{subfigure}[b]{0.45\textwidth}
         \centering
         \includegraphics[width=\textwidth]{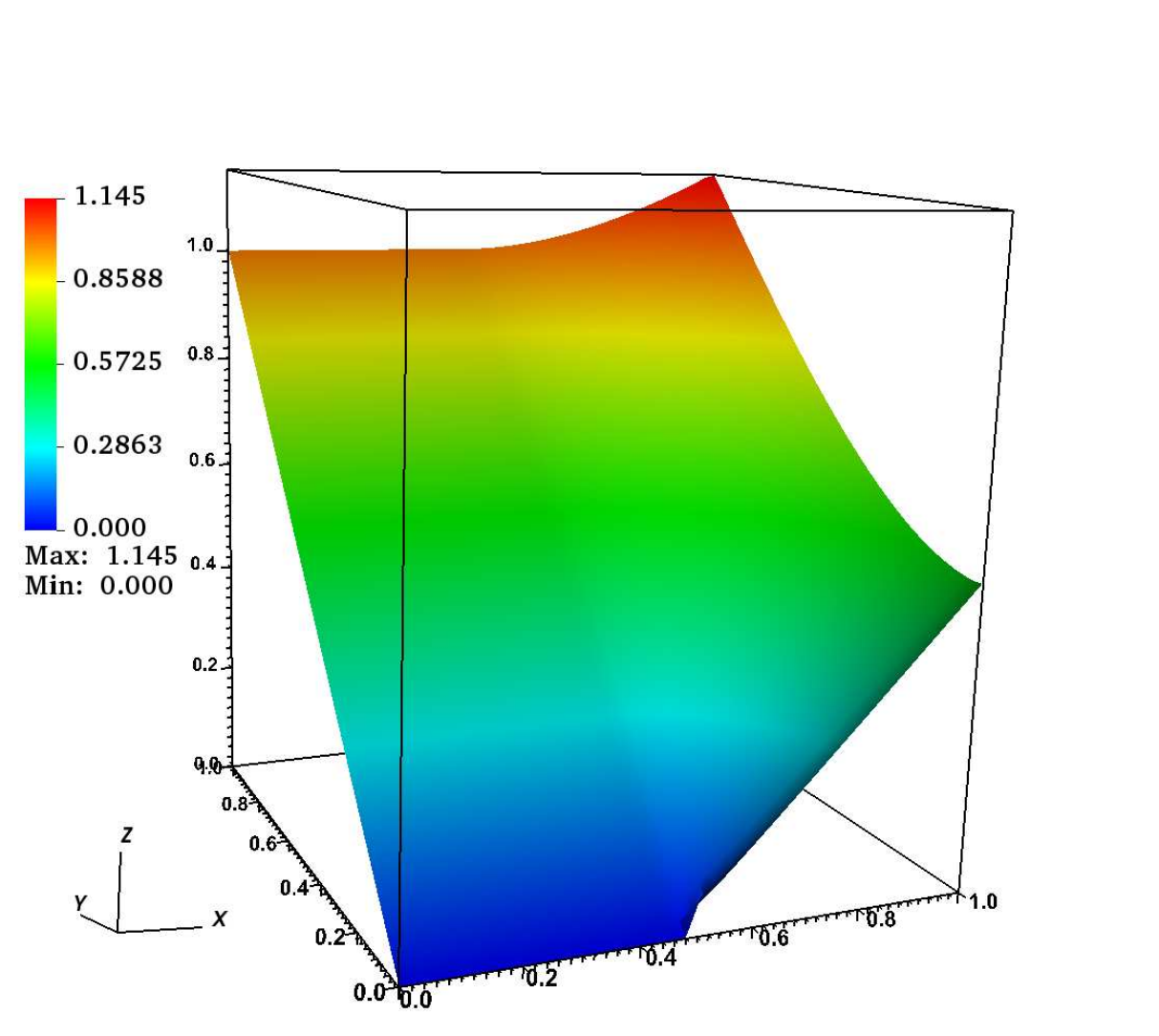}
         \caption{$u_h:$ using FO scheme}
     \end{subfigure}
     \begin{subfigure}[b]{0.45\textwidth}
         \centering
         \includegraphics[width=\textwidth]{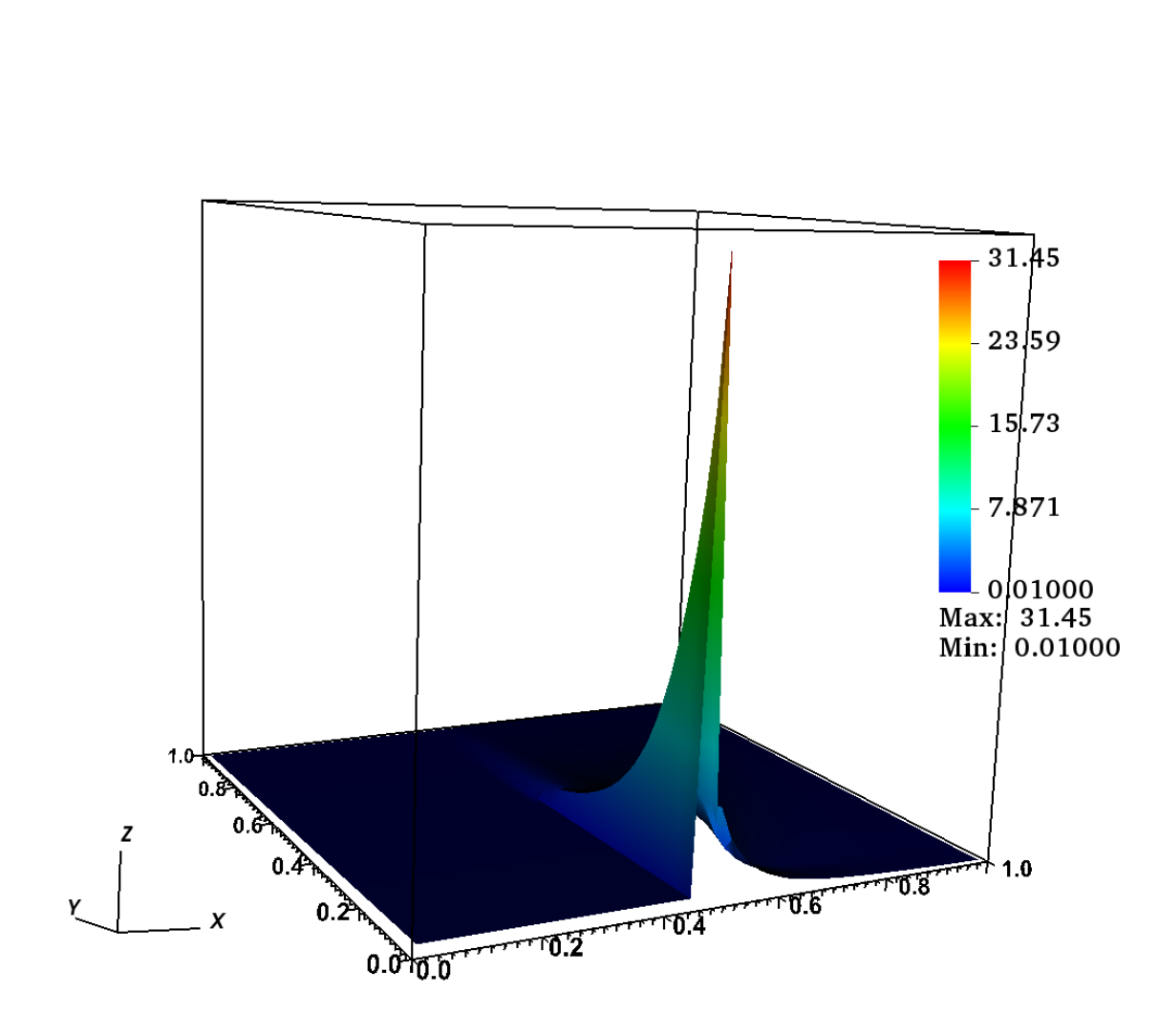}
         \caption{$v_h:$ using FO scheme }
         \label{fig:three sin x2}
     \end{subfigure}\\
              \begin{subfigure}[b]{0.49\textwidth}
         \centering
         \includegraphics[width=\textwidth]{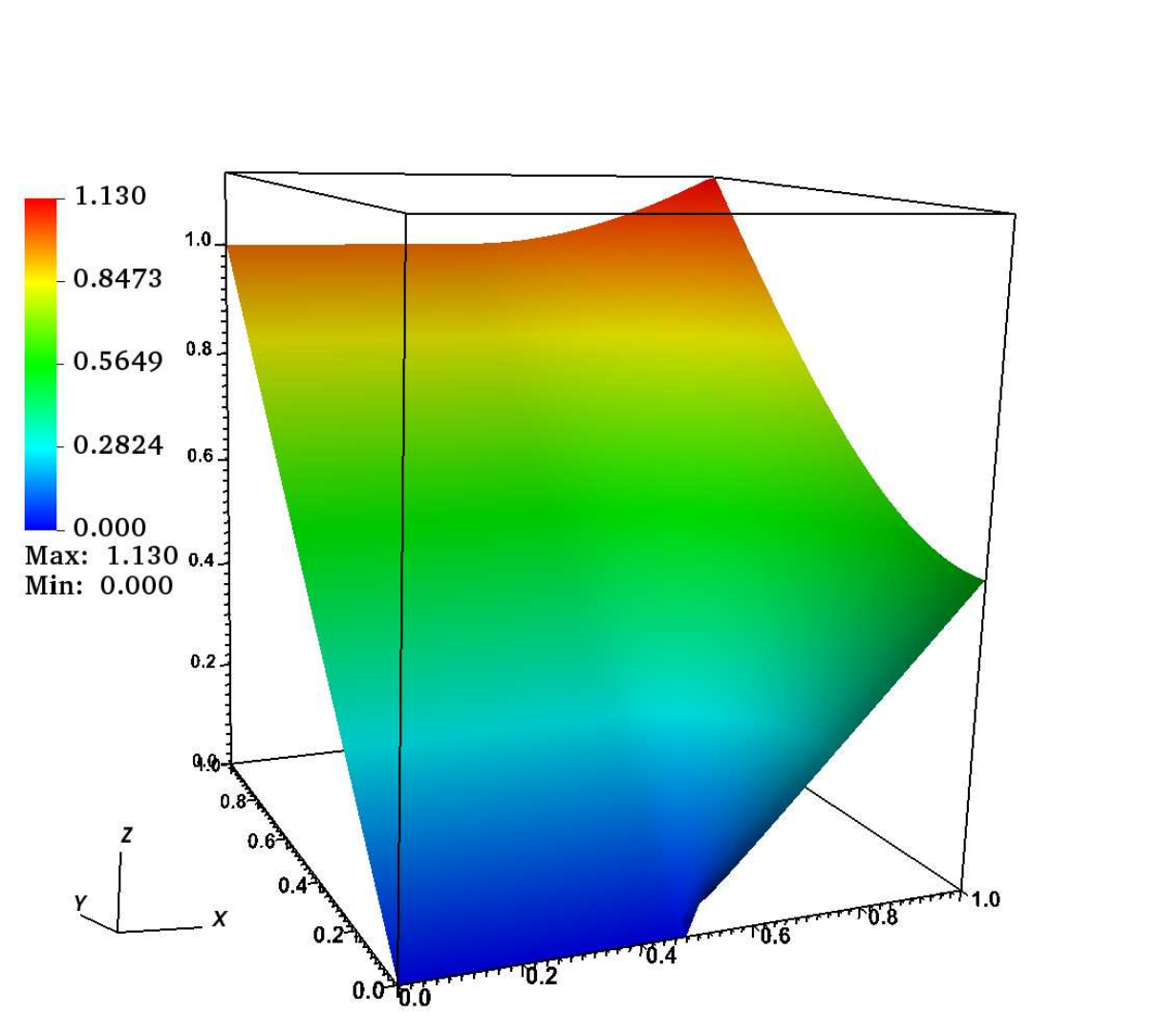}
         \caption{$u_h:$ using SO-$\Theta$ scheme}
     \end{subfigure}
     \begin{subfigure}[b]{0.49\textwidth}
         \centering
         \includegraphics[width=\textwidth]{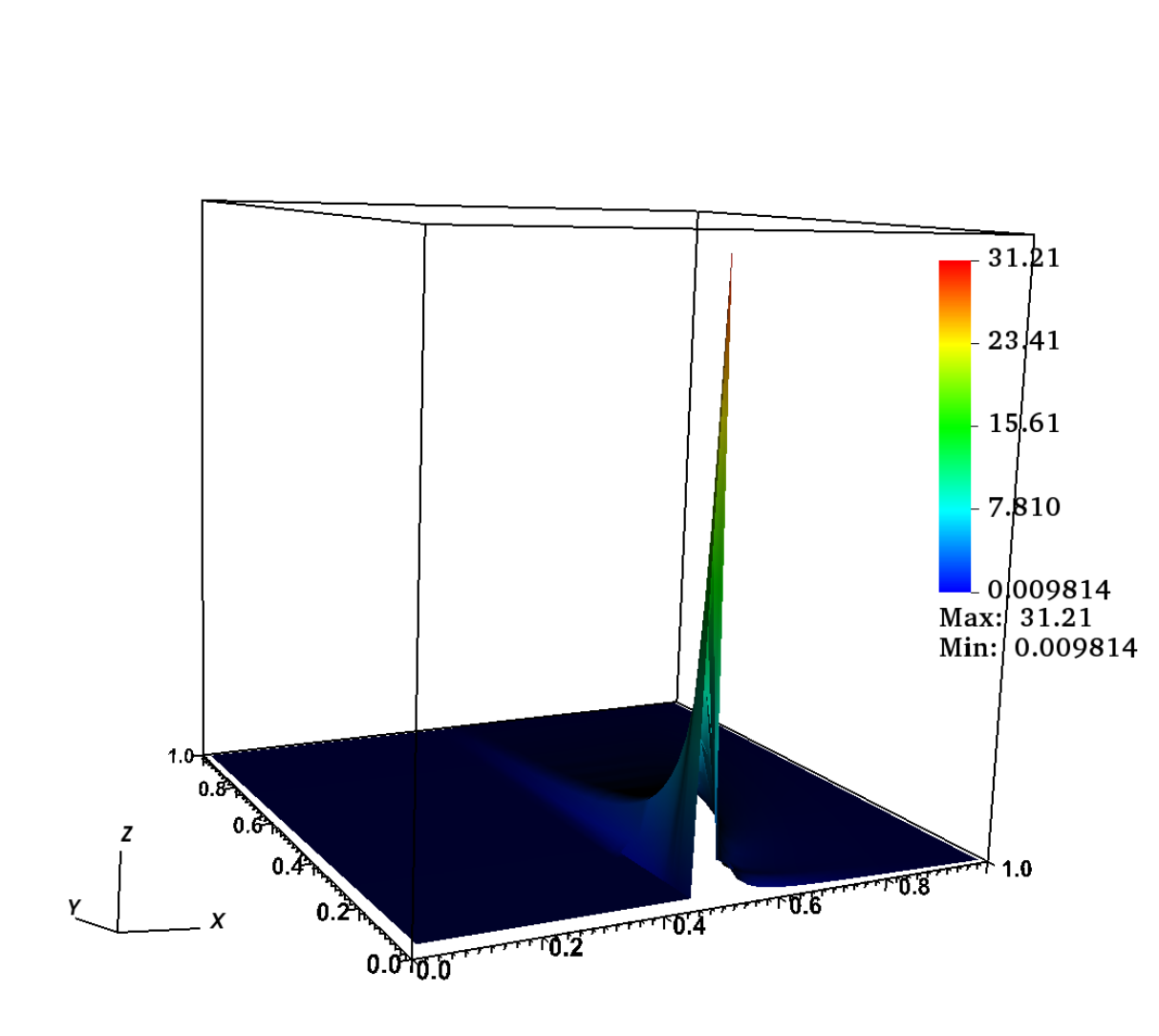}
         \caption{$v_h:$ using SO-$\Theta$ scheme }
         \label{fig:three sin x4}
     \end{subfigure}
\caption{ Example~9 (2D): numerical solutions of partially open table problem  with $h = 1/50$  and $ \Delta t = 0.1 h$ computed at the final time $T = 200.$}
\label{fig:pot2a}
\end{figure}

\begin{figure}
     \centering
     \begin{subfigure}[b]{0.45\textwidth}
         \centering
         \includegraphics[width=\textwidth]{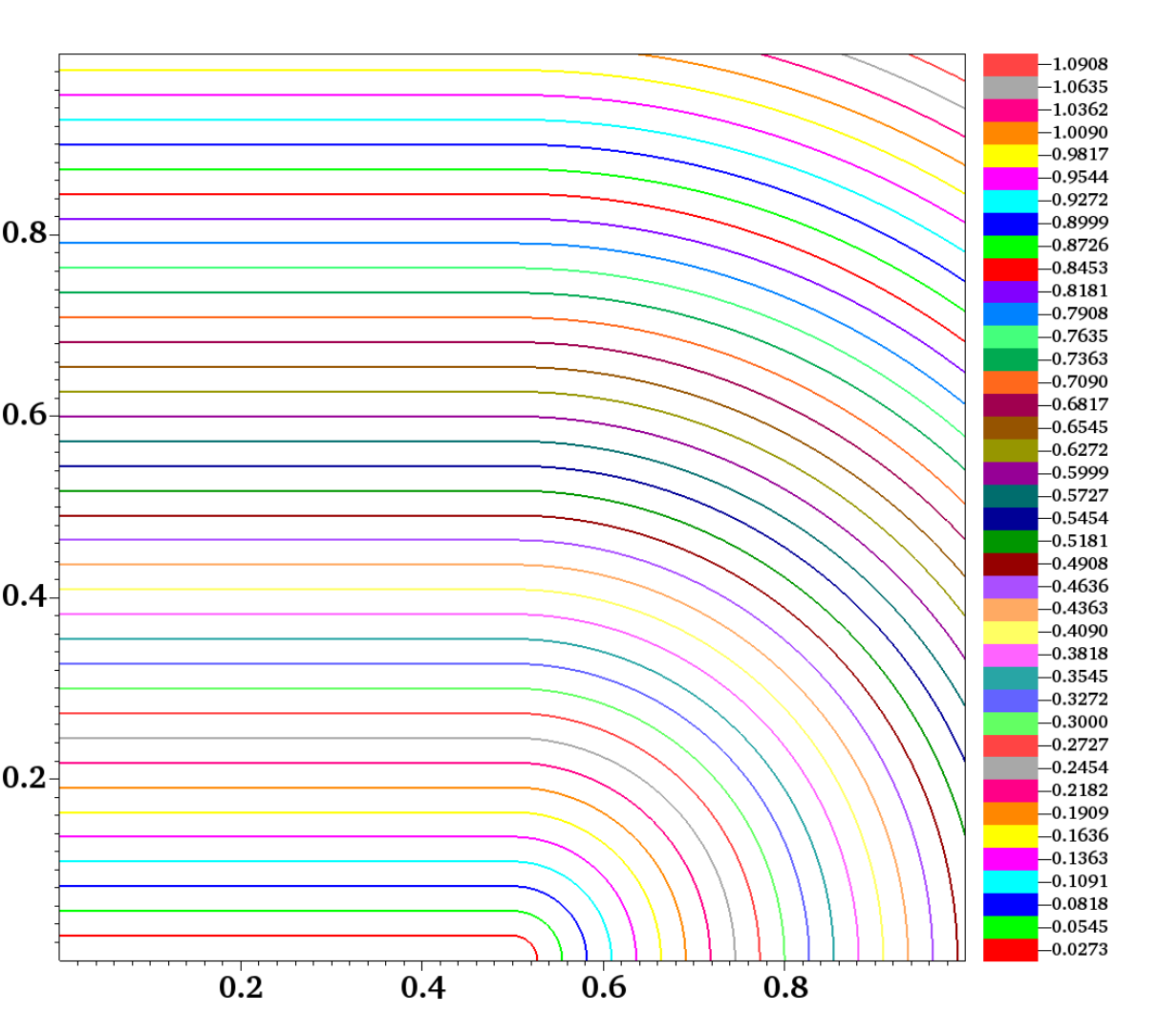}
         \caption{$u_{s}: $ exact steady state solution}
     \end{subfigure}
     \begin{subfigure}[b]{0.45\textwidth}
         \centering
         \includegraphics[width=\textwidth]{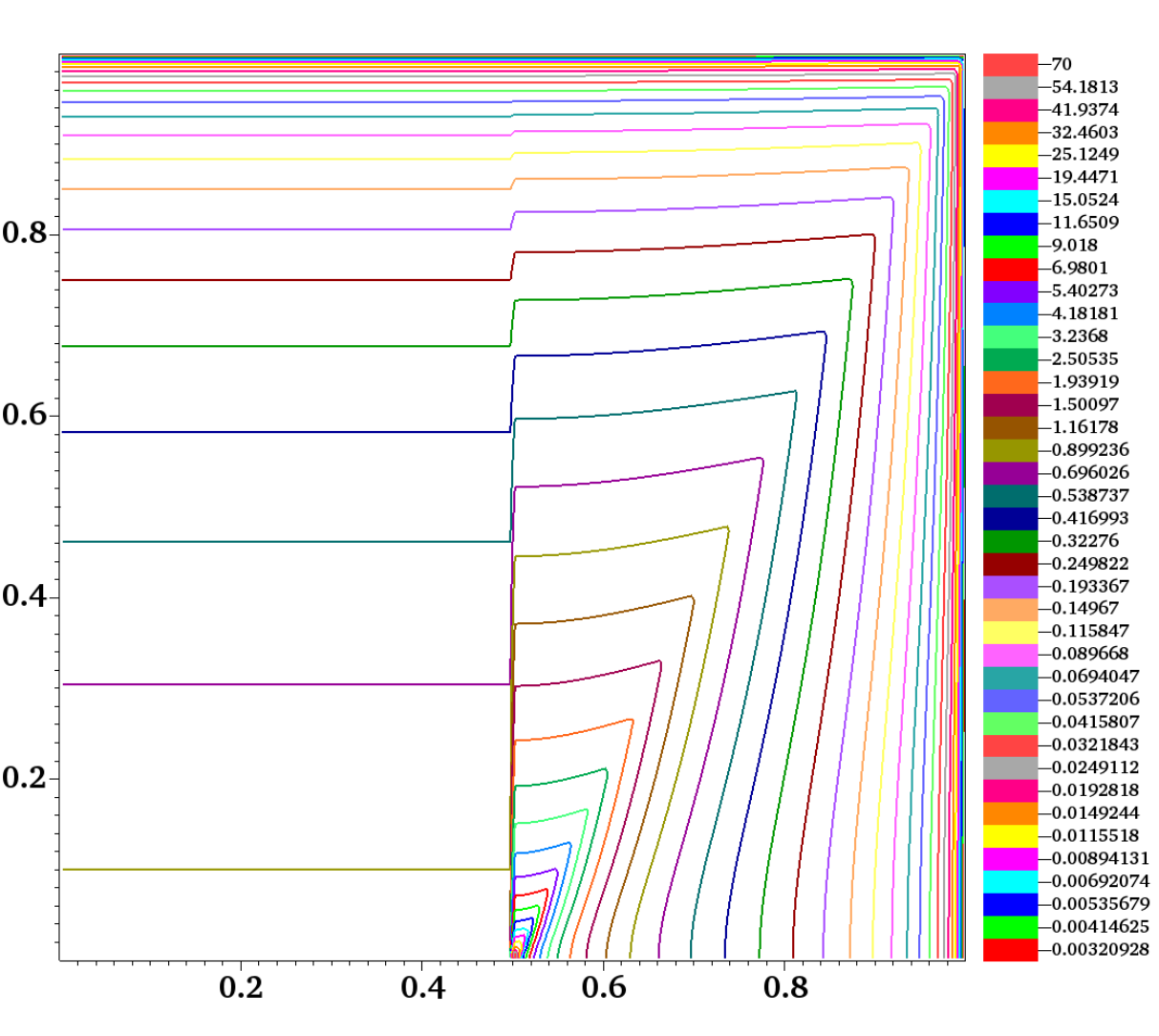}
         \caption{$\overline{v}:$ exact steady state solution }
         \label{fig:three sin x5}
     \end{subfigure}\\
     \begin{subfigure}[b]{0.45\textwidth}
         \centering
         \includegraphics[width=\textwidth]{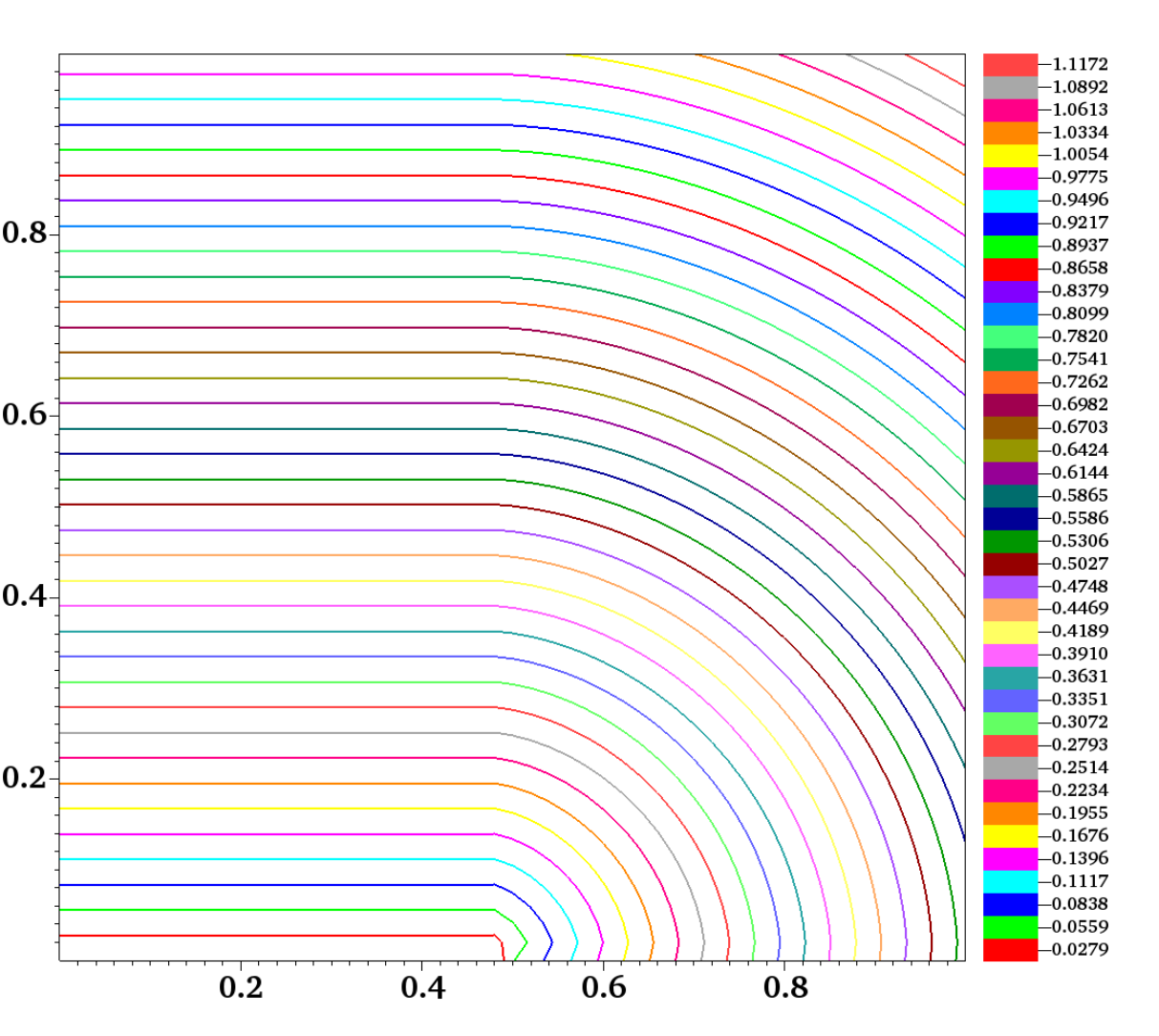}
         \caption{$u_h:$ using FO scheme}
     \end{subfigure}
     \begin{subfigure}[b]{0.45\textwidth}
         \centering
         \includegraphics[width=\textwidth]{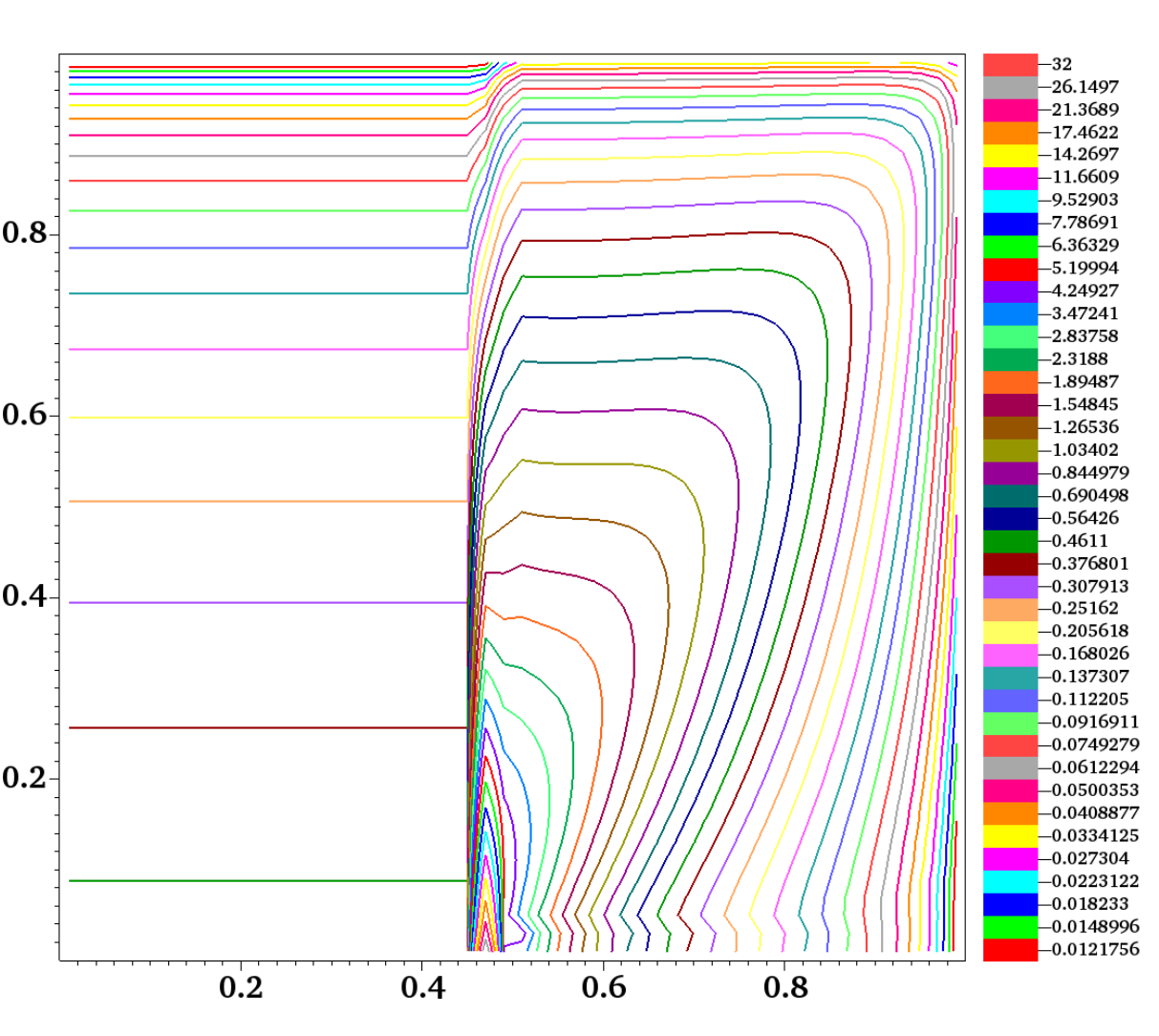}
         \caption{$v_h:$ using FO scheme }
         \label{fig:three sin x10}
     \end{subfigure}\\
              \begin{subfigure}[b]{0.45\textwidth}
         \centering
         \includegraphics[width=\textwidth]{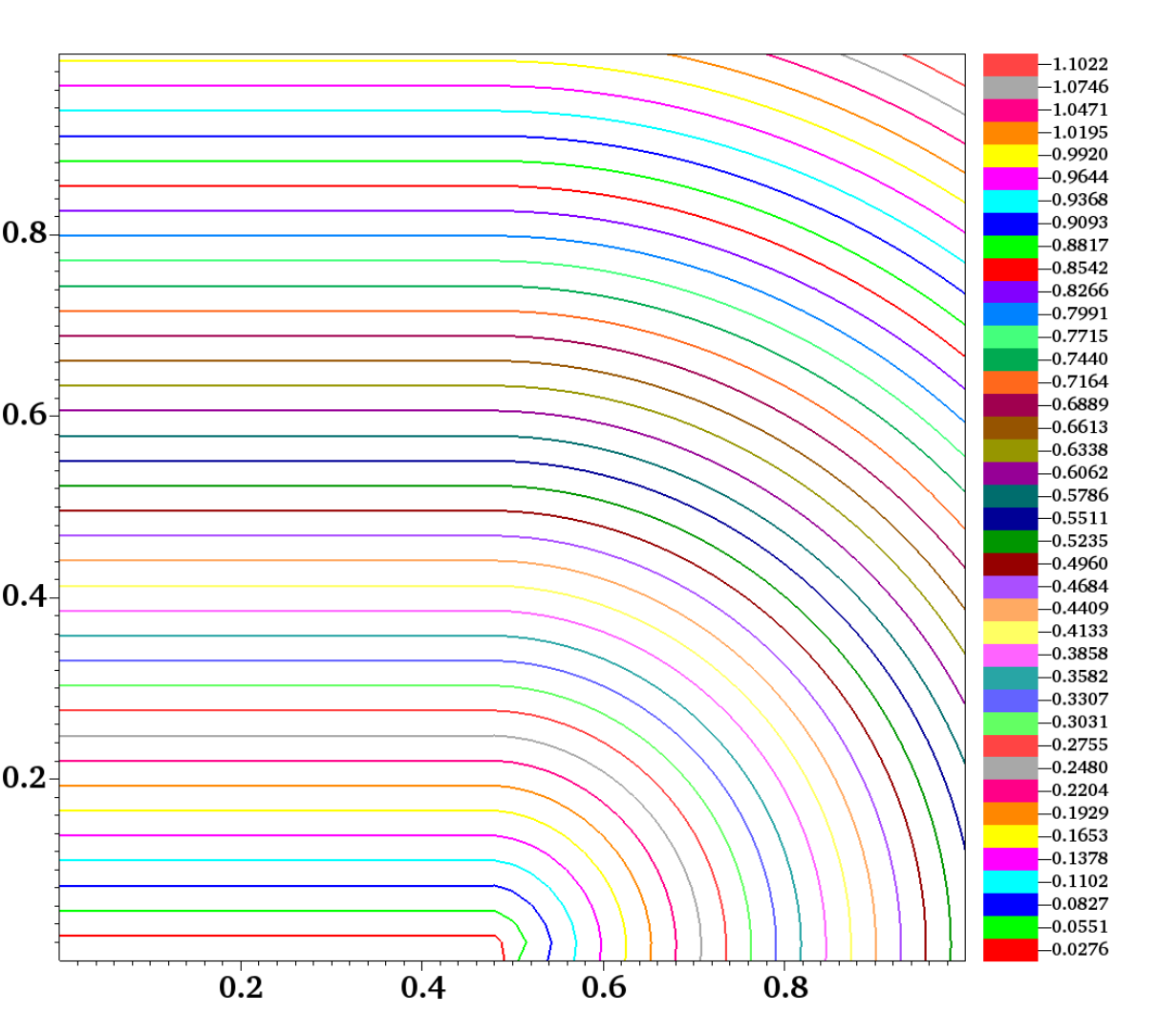}
         \caption{$u_h:$ using SO-$\Theta$ scheme}
     \end{subfigure}
     \begin{subfigure}[b]{0.45\textwidth}
         \centering
         \includegraphics[width=\textwidth]{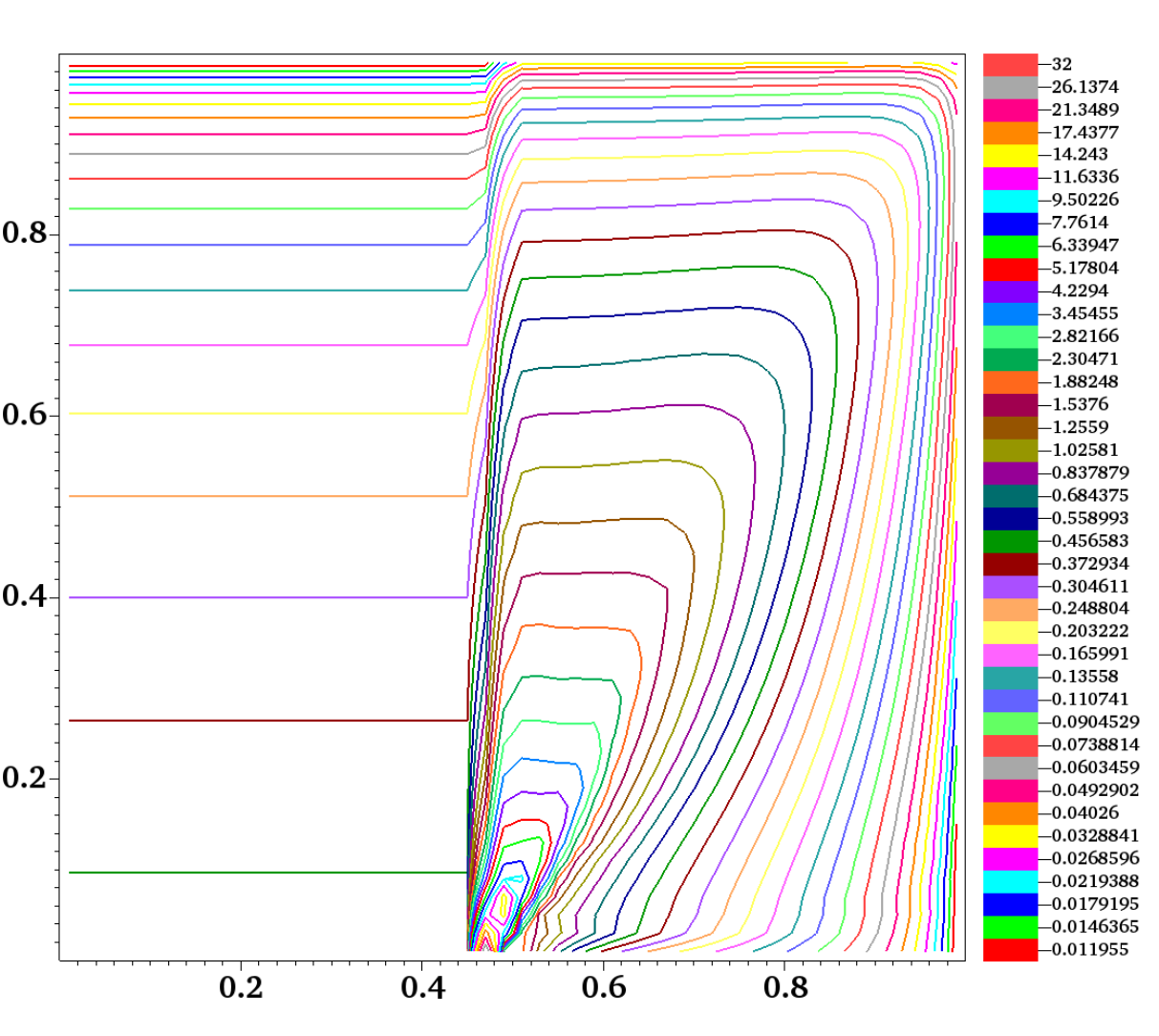}
         \caption{$v_h:$ using SO-$\Theta$ scheme }
         \label{fig:three sin x}
     \end{subfigure}
\caption{Example~9 (2D): numerical solutions of partially open table problem computed at the final time $T = 200$ with $h = 1/50, \Delta t = 0.1 h.$ The contour plots use  40 contour curves.}
\label{fig:pot2contur}
\end{figure}

\section{Conclusion}\label{sec:conclusion}
In this study, we address the challenges associated with numerically approximating the Hadler and Kuttler (\textbf{HK}) model, a complex system of non-linear partial differential equations describing granular matter dynamics. Focusing on  second-order schemes, we study the issues such as initial oscillations and delays in reaching steady states. We present a second-order scheme that incorporates a MUSCL-type spatial reconstruction and a strong stability preserving Runge-Kutta time-stepping method, building upon the  first-order scheme. Through an adaptation procedure that employs a modified limiting strategy in the linear reconstruction, our scheme achieves the well-balance property. We extend our analysis to two dimensions and demonstrate the effectiveness of our adaptive scheme through numerical examples. Notably, our resulting scheme significantly reduces initial oscillations, reaches the steady state solution faster than the first-order scheme, and provides a sharper resolution of the discrete steady state solution.

\section*{Acknowledgement} This work was done while one of the authors, G. D. Veerappa
Gowda, was a Raja Ramanna Fellow at TIFR-Centre for Applicable Mathematics, Bangalore. The work of Sudarshan Kumar K. is supported by the  Science and Engineering Research Board, Government of India, under MATRICS project no.~MTR/2017/000649.

\bibliographystyle{siam}
\bibliography{ref}
\end{document}